\tikzstyle{terminator} = [rectangle, draw, text centered, rounded corners, minimum height=2em]
\def\namedlabel#1#2{\begingroup
   \def\@currentlabel{#2}%
   \label{#1}\endgroup
}
\newcommand{\labitem}[2]{%
\def\@itemlabel{\textbf{#1}}
\item
\def\@currentlabel{#1}\label{#2}}
\numberwithin{equation}{section}
\title{The growth of Tate--Shafarevich groups of $p$-supersingular elliptic curves over anticyclotomic $\bbZ_p$-extensions at inert primes}
\author{Erman I{\c S}IK and Antonio Lei}
\subjclass[2020]{11R23 (primary); 11G05, 11R20 (secondary) }
\keywords{Iwasawa theory, elliptic curves, Mordell--Weil groups, Tate--Shafarevich groups, supersingular primes, anticyclotomic extensions}
\address{University of Ottawa, Department of Mathematics and Statistics, STEM Complex, 150 Louis-Pasteur Pvt, Ottawa, Ontario, K1N 6N5, Canada}
\email{\url{eisik@uottawa.ca},  \url{antonio.lei@uottawa.ca}}
\urladdr{\url{https://sites.google.com/view/erman-isik/}\\\url{https://antoniolei.com/}}
\newcommand{\orange}[1]{{\color{orange} \sf  #1}}
\newcommand{\mylabel}[2]{#2\def\@currentlabel{#2}\label{#1}}
\DeclareFontFamily{U}{wncy}{}
    \DeclareFontShape{U}{wncy}{m}{n}{<->wncyr10}{}
    \DeclareSymbolFont{mcy}{U}{wncy}{m}{n}
    \DeclareMathSymbol{\Sh}{\mathord}{mcy}{"58}
\newcommand{\Zp}{\bbZ_p}
\newcommand{\Qp}{\bbQ_p}
\newcommand{\image}{\mathrm{Image}}
\theoremstyle{definition}
\newtheorem{definition}{Definition}[section]
\newtheorem{lemma}[definition]{Lemma}
\newtheorem{theorem}[definition]{Theorem}
\newtheorem{prop}[definition]{Proposition}
\newtheorem{corollary}[definition]{Corollary}
\newtheorem{conj}[definition]{Hypothesis}
\newtheorem{remark}[definition]{Remark}
\newtheorem{thmx}{Theorem}
\DeclareFontFamily{U}{wncy}{}
    \DeclareFontShape{U}{wncy}{m}{n}{<->wncyr10}{}
    \DeclareSymbolFont{mcy}{U}{wncy}{m}{n}
    \DeclareMathSymbol{\Sh}{\mathord}{mcy}{"58}
\newcommand{\hatE}{\widehat{E}}
\newcommand{\ccS}{\mathcal{S}}
\newcommand{\ccF}{\mathcal{F}}
\newcommand{\ccO}{\mathcal{O}}
\newcommand{\ccX}{\mathcal{X}}
\newcommand{\ccY}{\mathcal{Y}}
\newcommand{\bbF}{{\mathbb F}}
\newcommand{\bbQ}{{\mathbb Q}}
\newcommand{\bbZ}{{\mathbb Z}}
\newcommand{\bff}{{\mathbf f}}
\newcommand{\bfz}{{\mathbf z}}
\newcommand{\fram}{{\mathfrak m}}
\newcommand{\fraq}{{\mathfrak q}}
\newcommand{\frakS}{{\mathfrak S}}
\newcommand{\mat}{ M_{2\times 2}(\Lambda)}
\newcommand{\Ltwo}{\Lambda^{\oplus 2}}
\newcommand{\An}{A_{(n)}}
\newcommand{\gal}{{\rm Gal}}
\newcommand{\tr}{{\rm Tr}}
\newcommand{\Hom}{{\rm Hom}}
\newcommand{\Sel}{{\rm Sel}}
\newcommand{\cyc}{{\rm cyc}}
\newcommand{\im}{{\rm Im}}
\newcommand{\loc}{{\rm loc}}
\newcommand{\col}{{\rm Col}}
\newcommand{\Char}{{\rm char}}
\newcommand{\iw}{{\rm Iw}}
\newcommand{\ord}{{\rm ord}}
\newcommand{\rank}{{\rm rank}}
\newcommand{\SL}{{\rm SL}}
\newcommand{\coker}{{\rm coker}}
\newcommand{\length}{{\rm length}}
\newcommand{\cor}{{\rm Cor}}
\newcommand{\res}{{\rm Res}}
\newcommand{\ext}{{\rm Ext}}
\newcommand{\corank}{{\rm corank}}
\newcommand{\pr}{{\rm pr}}
\newcommand{\tomg}{{\widetilde{\omega}}}
\newcommand{\bu}{\mathbf{u}}
\DeclareSymbolFont{cyrletters}{OT2}{wncyr}{m}{n}
\DeclareMathSymbol{\Sha}{\mathalpha}{cyrletters}{"58}
\definecolor{Green}{rgb}{0.0, 0.5, 0.0}
\newcommand{\ZZ}{\mathbb{Z}}
\begin{document}
\maketitle

\begin{abstract}
    Let $E$ be an elliptic curve defined over $\bbQ$, and let $K$ be an imaginary quadratic field. Consider an odd prime $p$ at which $E$ has good supersingular reduction with $a_p(E)=0$ and which is inert in $K$. Under the assumption that the signed Selmer groups are cotorsion modules over the corresponding Iwasawa algebra, we prove that the Mordell--Weil ranks of $E$ are bounded over any subextensions of the anticyclotomic $\bbZ_p$-extension of $K$. Additionally, we provide an asymptotic formula for the growth of the $p$-parts of the Tate--Shafarevich groups of $E$ over these extensions.
\end{abstract}

\section{Introduction}

Let $E$ be an elliptic curve defined over $\bbQ$ and $p$ be an odd prime of good reduction.  Let $\bbQ_\cyc$ denote the cyclotomic $\bbZ_p$-extension of $\bbQ$, and $\bbQ_{(n)}$  be the unique subextension of $\bbQ_\cyc$ of degree $p^n$ for $n\geq 0$. Suppose that the $p$-primary component of the Tate--Shafarevich group $ \Sha\left(E/\bbQ_{(n)}\right)$ is finite for all $n$, and let $p^{e_n} =\big| \Sha(E/K_n )[p^\infty]\big|$. The variation of $e_n$ in $n$ depends on whether the elliptic curve $E$ has ordinary or supersingular reduction at $p$.

\vspace{2mm}

When $E$ has ordinary reduction at $p$, it follows from \cite[Theorem 17.4]{kato04} that the $p$-primary Selmer group of $E$ over $\bbQ_\cyc$ is cotorsion over the associated Iwasawa algebra. It then follows from Mazur's control theorem \cite{Maz72} that for $n\gg0$, 
 \begin{equation*}
    e_n-e_{n-1}  = \lambda + (p^n-p^{n-1}) \mu - r_\infty,
 \end{equation*}
 where $r_\infty$ is the rank of $E$ over $\bbQ_\cyc$ (which is finite by \cite{kato04,roh84}), and $\lambda$ and $\mu$ are the Iwasawa invariants of the Pontryagin dual of the $p$-primary Selmer group of $E$ over $\bbQ_\cyc$. 

 \vspace{2mm}

 When $E$ has supersingular reduction at $p$, the case where $a_p(E)=0$ has been studied by Kurihara \cite{kur02}, Kobayashi \cite{kob03} and Pollack \cite{Pol05}.  For $n\gg 0$, we have the following formula:
 \begin{equation*}
     e_n - e_{n-1}  = \deg\tomg_n^\mp + \lambda_{\pm} + \mu_{\pm}(p^n-p^{n-1}) - r_\infty,
 \end{equation*}
 where $\tomg_n^\mp$ is the product of certain cyclotomic polynomials, $r_\infty$ is the rank of $E$ over $\bbQ_\cyc$, $\lambda_\pm$ and $\mu_\pm$ are the Iwasawa invariants of the cotorsion signed Selmer groups of $E$ over $\bbQ_\cyc$, and the sign $\pm$ depends on the parity of $n$.

\vspace{2mm}

In \cite{Spr13,Spr17}, Sprung obtained an analogous formula for the $p$-supersingular elliptic curves with $a_p(E)\neq 0$ and abelian varieties of GL(2)-type. See also \cite{LLZ17} for related work on upper bounds on the growth of the Bloch--Kato--Shafarevich--Tate groups for higher-weight modular forms. Furthermore, the growth of Mordell--Weil ranks and the Tate--Shafarevich groups of an elliptic curve over the cyclotomic $\bbZ_p$-extension of certain number fields have been studied in \cite{LL22}.

\vspace{2mm}
The focus of this article is analogous results for the anticyclotomic $\Zp$-extension of an imaginary quadratic field $K$. Suppose that $p$ splits in $K$ and the two primes above $p$ splits completely in the anticyclotomic $\Zp$-extension of $K$.  
Under certain hypotheses on the vanishing of the Mordell--Weil rank and the $p$-part of the Tate--Shafarevich group of $E$ over $K$, Iovita and Pollack \cite[Theorem 5.1]{ip06} derived a formula of the growth of the Tate--Shafarevich groups over anticyclotomic tower that mimics the cyclotomic case. Their hypotheses imply that both the plus and minus Selmer groups have trivial Iwasawa invariants, meaning that the growth of the Tate--Shafarevich groups is minimal. Their method can potentially be extended to the case where $p$ is inert in $K$ under similar stringent hypotheses. We avoid such hypotheses in this article. Instead, we develop a method that allows the plus and minus Selmer groups to have non-trivial Iwasawa invariants.

\vspace{2mm}

 If we assume in addition that $p \geq 5$ and that $(E,K)$ satisfies the Heegner hypothesis, {\c C}iperiani proved in \cite{Cip09} that the Tate–Shafarevich group of $E$ has trivial corank over the Iwasawa algebra associated with the anticyclotomic $\bbZ_p$-extension of $K$. In \cite{matar21,llm23}, a more precise formula of the growth of the Tate--Shafarevich groups inside this tower is given. More recently, Burungale--Kobayashi--Ota and M\"uller \cite{bko24,muller25} studied this question when $E$ has complex multiplication by an order in $K$. The methods in these works are specifically developed for the setting where at least one of the plus and minus Selmer groups is not cotorsion. The main goal of the present article is to study the growth of the Tate–Shafarevich groups when both Selmer groups are cotorsion, for which the methods in the aforementioned works do not extend directly.

\subsection{Statement of the main theorem} 
Let $K$ be an imaginary quadratic field such that $p$ is \textit{inert} in $K$. Assume that $p$ does not divide the class number $h_K$ of $K$ and $a_p(E)=0$. Let $K_\infty$ denote the anticyclotomic $\bbZ_p$-extension of $K$. Note that the unique prime of $K$ above $p$ is totally ramified in $K_\infty$. For an integer $n \geq 0$, we write $K_n$ for the unique subextension of $K_\infty$ such that $[K_n : K] = p^n$.

\vspace{2mm}

 The Galois group of $K_\infty$ over $K$ is denoted by $\Gamma$. In addition, write $\Gamma_n = \gal(K_\infty/K_n)$ and $G_n = \gal(K_n/K)$. We fix once and for all a topological generator $\gamma$ of $\Gamma$. Let $\Lambda$ denote the Iwasawa algebra $\bbZ_p[[\Gamma]]=\displaystyle\varprojlim_n\Zp[G_n]$, which we shall identify with the power series ring $\bbZ_p[[X]]$ by sending $\gamma-1$ to $X$. Let $(-)^\vee := \Hom(-,\bbQ_p/\bbZ_p)$ denote the Pontryagin duality functor.

\vspace{2mm}

 In this article, we prove that the Mordell--Weil rank of $E$ over $K_\infty$ is bounded assuming that the signed Selmer groups $\Sel^\pm(E/K_\infty)$ are $\Lambda$-cotorsion. Furthermore, we derive an asymptotic formula for the growth of the $p$-parts of the Tate--Shafarevich groups $ \Sha(E/K_n )$ of $E$ in terms of the Iwasawa invariants of $\Sel^\pm(E/K_\infty)^\vee$. These Selmer groups have recently been studied in \cite{bbl24}, and are similar to those defined by Rubin in \cite{rub87loc} for CM elliptic curves (see Definition~\ref{def:pmSel} for the precise definition and Remark~\ref{rk:pmGroups} for a discussion on how the plus and minus subgroups used to define these groups compare with the counterparts used to define  Kobayashi's plus and minus Selmer groups in \cite{kob03}). The main result of this article is the following theorem.

 \vspace{2mm}

\begin{thmx}\label{maintheorem}
Let $E$ be an elliptic curve defined over $\bbQ$ and $p$ an odd prime where $E$ has good supersingular reduction with $a_p(E)=0$.
Let $K$ be an imaginary quadratic field such that $p$ is inert in $K$. Assume that $p$ does not divide the class number $h_K$. Let $K_\infty$ denote the anticyclotomic $\bbZ_p$-extension of $K$.
  Assume that $\Sel^\pm(E/K_\infty)^\vee$ are both $\Lambda$-torsion.  Then, we have:
  \begin{enumerate}[label=(\roman*)]
        \item $\rank_\bbZ E(K_n)$ is bounded independently of $n$;
        
\vspace{1.5mm}
        
        \item Assume that $\Sha(E/K_n)[p^
        \infty]$ is finite for all $n\geq 0$.  Let $p^{e_n} =\big| \Sha(E/K_n )[p^\infty]\big|$. Define $\displaystyle r_\infty=\sup_{n\ge0}\left\{ \rank_\bbZ E(K_n)\right\}$. Let $\lambda_\pm$ and $\mu_\pm$ be the Iwasawa invariants of $\Sel^\pm(E/K_\infty)^\vee$. Then, for $n\gg0$, 
      \[e_n-e_{n-1}=\begin{cases}
      2s_{n-1}  + \lambda_- + \phi(p^n)\mu_-   -r_\infty    & \;\;\; \text{if } n  \text{ is odd,}  \\
          2s_{n-1}+\lambda_+ + \phi(p^n)\mu_+ -r_\infty  & \;\;\; \text{if } n \text{ is even,}
        \end{cases} \]
        where $\phi(p^n)=p^n-p^{n-1}$, and $s_n=\sum_{k=1}^{n}(-1)^{n-k}p^k$ for $n\geq 0$.
    \end{enumerate}
\end{thmx}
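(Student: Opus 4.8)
The plan is to reduce both parts to properties of the signed Selmer groups $\Sel^{\pm}(E/K_n)$, which we control at finite level by a Mazur-type control theorem and at infinite level by the structure theory of finitely generated torsion $\Lambda$-modules, and then to transfer the information to the classical $p^\infty$-Selmer group $\Sel(E/K_n)$ via a Poitou--Tate comparison supported at the unique prime $\mathfrak p$ of $K$ above $p$. Throughout one uses that $E(K_\infty)[p^\infty]$ is finite, which holds because $E$ has good supersingular reduction at the inert prime $p$.

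\textbf{Step 1: control theorem at $\mathfrak p$.} First I would prove that the restriction maps $\Sel^{\pm}(E/K_n)\to\Sel^{\pm}(E/K_\infty)^{\gal(K_\infty/K_n)}$ have finite kernel and cokernel bounded independently of $n$; equivalently, that $\Sel^{\pm}(E/K_n)^{\vee}$ agrees with $X^{\pm}/\omega_n X^{\pm}$ up to a bounded error, where $X^{\pm}:=\Sel^{\pm}(E/K_\infty)^{\vee}$ and $\omega_n=(1+X)^{p^n}-1$. Away from $\mathfrak p$ this is the usual argument using finiteness of $E(K_\infty)[p^\infty]$ and of the set of ramified primes. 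The substantive point is the local behaviour at $\mathfrak p$: since $p$ is inert, $K_{n,\mathfrak p}$ is a totally ramified degree-$p^n$ extension of the unramified quadratic extension $K_{\mathfrak p}/\Qp$, and one has to show, using $a_p(E)=0$, the theory of the supersingular formal group over $K_{\mathfrak p}$, and the properties of the plus/minus local conditions established in \cite{bbl24}, that the cohomology groups measuring the failure of control (such as $H^1(\gal(K_{\infty,\mathfrak p}/K_{n,\mathfrak p}),E^{\pm})$ and the kernels of the local restriction maps) stay bounded. I expect this to be the main obstacle: the inertness doubles the local degree, so one cannot simply invoke Kobayashi's cyclotomic computations in \cite{kob03} or the split-prime analysis of Iovita--Pollack in \cite{ip06}, and must redo the local plus/minus point calculations over the ramified tower.

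\textbf{Step 2: boundedness of the rank.} Because $X^{\pm}$ is $\Lambda$-torsion, the structure theorem shows $\corank_{\Zp}(X^{\pm}/\omega_n X^{\pm})$ is bounded in $n$ (it stabilises to the sum of $\phi(p^k)$ over the cyclotomic factors $\Phi_{p^k}(1+X)$ dividing the characteristic ideal, a finite multiset), so by Step 1 the $\Zp$-coranks of $\Sel^{\pm}(E/K_n)$ are bounded. The decomposition $E(K_{n,\mathfrak p})\otimes\Qp/\Zp=(E^{+}\otimes\Qp/\Zp)+(E^{-}\otimes\Qp/\Zp)$ from \cite{bbl24} yields, after a standard Poitou--Tate argument, that $\Sel(E/K_n)/(\Sel^{+}(E/K_n)+\Sel^{-}(E/K_n))$ is finite, whence $\corank_{\Zp}\Sel(E/K_n)\le\corank_{\Zp}\Sel^{+}(E/K_n)+\corank_{\Zp}\Sel^{-}(E/K_n)$ is bounded. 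Part~(i) then follows from the Kummer injection $E(K_n)\otimes\Qp/\Zp\hookrightarrow\Sel(E/K_n)$, and in particular $r_\infty<\infty$.

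\textbf{Steps 3--4: the asymptotic formula.} Since $\Sha(E/K_n)[p^\infty]$ is finite, the Kummer sequence gives $\Sel(E/K_n)/\Sel(E/K_n)_{\mathrm{div}}\cong\Sha(E/K_n)[p^\infty]$, so $e_n=\length_{\Zp}\big(\Sel(E/K_n)^{\vee}\big)_{\tor}$, and likewise $\length_{\Zp}\big(\Sel^{\pm}(E/K_n)^{\vee}\big)_{\tor}=\length_{\Zp}\big(X^{\pm}/\omega_n X^{\pm}\big)_{\tor}+O(1)$ by Step 1. The structure theorem then gives $\length_{\Zp}\big(X^{\pm}/\omega_n X^{\pm}\big)_{\tor}=\mu_{\pm}p^n+\lambda_{\pm}n+c_{\pm}$ for $n\gg0$, provided $X^{\pm}$ has no nonzero finite $\Lambda$-submodule so that the lower-order term stabilises — a point that must be checked, presumably via \cite{bbl24}. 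The remaining work is to compare $\Sel(E/K_n)^{\vee}$ with $\Sel^{\pm}(E/K_n)^{\vee}$: the relevant exact sequence has as error term the local quotient of $E(K_{n,\mathfrak p})\otimes\Qp/\Zp$ by the signed subgroup relevant at level $n$, and an explicit computation of these local points over $K_{n,\mathfrak p}$ — the analogue of Kobayashi's polynomials $\tomg_n^{\pm}$, now carrying an extra factor $2$ from $[K_{\mathfrak p}:\Qp]=2$ — shows that it contributes exactly $2s_{n-1}$ to $e_n-e_{n-1}$. Tracking $\Zp$-coranks in the associated long exact sequence, where $\corank_{\Zp}\Sel(E/K_n)=\rank_{\bbZ}E(K_n)\to r_\infty$ while the coranks of the signed Selmer groups stabilise, produces the term $-r_\infty$. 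Assembling these, forming the difference $e_n-e_{n-1}$ so that the constants $c_{\pm}$ cancel and $\mu_{\pm}(p^n-p^{n-1})=\phi(p^n)\mu_{\pm}$ appears, and keeping careful track of which sign governs each parity of $n$, yields the stated formula.
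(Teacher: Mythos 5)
Your outline reproduces the expected shape of the answer, but the two load-bearing steps are exactly the ones you leave unproved, and neither is routine in this inert setting. Step 1 asks for a Mazur-type control theorem for $\Sel^\pm$ over the anticyclotomic tower, i.e.\ that $\Sel^\pm(E/K_n)^\vee$ agrees with $X^\pm/\omega_nX^\pm$ up to uniformly bounded error. You yourself flag this as ``the main obstacle'', but you give no mechanism for it, and it cannot be taken for granted here: the plus/minus subgroups are \emph{not} Kobayashi's (Remark~\ref{rk:pmGroups}) --- the trivial character lies in $\Xi^-$, $\hatE^+(\fram_0)=\{0\}$, and the image of $\col^+$ is the augmentation ideal $I_\ccO$ rather than all of $\Lambda_\ccO$ (Proposition~\ref{surjectivityofcolemanmaps}), an exceptional-zero asymmetry that a symmetric ``$\pm$'' control statement with $\Gamma_n$-invariants ignores. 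The paper deliberately proves no control theorem for $\Sel^\pm$ at all; the only control input it uses is for the \emph{fine} Selmer group (Lemma~\ref{controltheorem}), whose local condition at $p$ is trivial, and the closing remark of the paper only suggests that signed control theorems \emph{might} give an alternative proof of part (i), not of the growth formula. A second, smaller gap of the same kind: the finiteness of $\Sel(E/K_n)/(\Sel^+(E/K_n)+\Sel^-(E/K_n))$ invoked in Step 2 is not a ``standard Poitou--Tate argument'' at finite level and needs proof; the paper instead deduces part (i) from the fact that $\nabla\ccX(E/K_n)$ is defined for $n\gg0$ (Corollary~\ref{cor:nablaX}).

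The deeper problem is Step 4. The quantity $e_n$ is the $\Zp$-length of the torsion part of $\ccX(E/K_n)$, and the term $2s_{n-1}$ is \emph{not} a purely local quantity obtained by measuring $E(k_n)\otimes\Qp/\Zp$ against $\hatE^\pm(\fram_n)\otimes\Qp/\Zp$: it is the Kobayashi rank of the cokernel $\ccY(E/K_n)$ of the global-to-local map $H^1(G_\Sigma(K_n),T)\to H^1(k_n,T)/(E(k_n)\otimes\Zp)$, so one must control the \emph{image} of the global classes, which requires global duality, the cotorsion hypothesis, and the explicit image of the Coleman maps; tracking $\Zp$-coranks in a long exact sequence cannot yield lengths of torsion parts. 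Moreover, in the inert case this computation is genuinely two-dimensional: $H^1_\iw(K_\infty,T)$ and the target of $(\col^+,\col^-)$ are free $\Lambda$-modules of rank $2$, and the paper's proof hinges on choosing a basis $\bu$ so that the matrices $F_n(\bu)$ are \emph{special} (Proposition~\ref{prop:good-basis}) and on the new $\nabla$-formula for such $2\times2$ matrices (Theorem~\ref{thm:app}), which is the stated principal novelty; Kobayashi's rank-one computation, which your ``extra factor $2$'' heuristic implicitly extrapolates, does not apply, and your proposal offers no substitute for this step. So while the strategy family (control plus comparison) is reasonable and the predicted shape of the formula is right, the argument as proposed has genuine gaps precisely where the paper's technical content lies.
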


 \vspace{2mm}

\begin{remark}
 Under certain hypotheses, it is known that $\Sel^\pm(E/K_\infty)^\vee$ are $\Lambda$-torsion; see \cite{BLV,bbl24} and Remark~\ref{cotorsionresult} for a more detailed discussion. Our result is complementary to the recent works of \cite{bko24,muller25}, where $E$ is assumed to have complex multiplication by an order in $K$. In their setting, there exists exactly one element $\bullet\in\{+,-\}$ (the choice of which depends on the root number of $E$) such that $\Sel^\bullet(E/K_\infty)^\vee$ is $\Lambda$-torsion. Furthermore, $\rank_\bbZ E(K_n)$ is unbounded as $n\rightarrow\infty$.
\end{remark}

\subsection{Organization}

In Section \ref{localpoints}, we review several plus and minus objects. We first review a system of local points introduced in \cite[\S 2]{bko23}, which is used to construct signed (or plus and minus) Coleman maps following \cite{kob03}. These maps are then utilized to define the signed Selmer groups that appear in the statement of Theorem~\ref{maintheorem}. After analyzing the images of these Coleman maps, we review results on certain global cohomology groups under the assumption that the signed Selmer groups are cotorsion. We review the definition of Kobayashi ranks for projective systems of $\mathbb{Z}_p$-modules in Section \ref{kobayashiranks}. We derive a formula for the Kobayashi ranks of modules arising from specific $2 \times 2$ matrices defined over $\Lambda$, which is central to our proof of Theorem~\ref{maintheorem} and is the principal novelty of this article. In Section \ref{proofofthemaintheorem}, we establish a connection between Coleman maps and Kobayashi ranks, and show how this relationship enables us to study the growth of certain local modules. We conclude by combining these results to prove Theorem \ref{maintheorem}.

\subsection{Outlook}
Our proof of Theorem~\ref{maintheorem} follows closely the ideas of \cite[\S10]{kob03}. In \textit{loc. cit.}, the author presented a detailed study of the growth of modules of the form $\left(\Lambda/\langle f\rangle\right)_{\Gamma_n}$, where $f$ is a non-zero element of $\Lambda$, as $n\rightarrow\infty$. In the context of the current article, several Iwasawa modules have rank 2, and we are led to study the growth of modules of the form $\left(\Lambda^{\oplus 2}/\langle u_1,u_2\rangle\right)_{\Gamma_n}$, where $u_1,u_2$ are linearly independent elements of $\Lambda^{\oplus 2}$. It should be possible to extend our results to modules arising from $\Lambda^{\oplus d}$, $d\ge2$. This would potentially allow us to remove some of the hypotheses imposed in \cite{LL22} to study elliptic curves defined over more general number fields. 

\vspace{2mm}

The hypothesis $p\nmid h_K$ ensures that there is only one prime lying above $p$ inside the anticyclotomic tower. It should be possible to extend some of the results of the present article without this hypothesis, though not necessarily in a straightforward manner. One would have to construct local Coleman maps for each prime lying above $p$ and consider the image of the direct sum of these maps.

\vspace{2mm}

In a different vein, it would be interesting to study similar results in the context of the present article without assuming $a_p(E)=0$. This would require an appropriate extension of the results of \cite{bko21,bko23} on local points. 

\vspace{2mm}

Finally, we remark that a unified approach to study anticyclotomic Iwasawa main conjectures has been introduced in \cite{BLV} that treats the $p$-split and $p$-inert cases simultaneously. The method introduced in this article is specifically tailored for the $p$-inert case. For example, we crucially make use of the $\ZZ_{p^2}$-module structure of the local points attached to the formal group of $E$ and carry out explicit calculations specifically for rank-two modules over $\Lambda$. It can be very interesting to develop an alternative approach in line with \cite{BLV} that would apply to the $p$-split and $p$-inert cases uniformly.

\subsection*{Acknowledgements} We thank Ashay Burungale, F{\i}rt{\i}na K{\"u}{\c c}{\"u}k and Meng Fai Lim for interesting discussions during the preparation of this article. We also thank the anonymous referees for many helpful comments and suggestions. The authors' research is supported by the NSERC Discovery Grants Program RGPIN-2020-04259 and RGPAS-2020-00096. EI is also partially supported by a postdoctoral fellowship from the Fields Institute.

\section{Plus and minus objects}\label{localpoints}

\subsection{Local points of Burungale--Kobayashi--Ota}
For $0 \leq n \leq \infty$, let $k_n$ denote the localization of $K_n$ at the unique prime above $p$ (the uniqueness of this prime is a consequence of $p\nmid h_K$). Let $\fram_n$ denote the maximal ideal of the integer ring of $k_n$. For simplicity, set $k:=k_0$, and let $\ccO$ denote the integer ring of $k$. When $n <\infty$, we identify the Galois group $\gal(k_n/k)$ with $G_n$, and similarly $\gal(k_\infty/k)$ with $\Gamma$. 

\vspace{2mm}
 
 Let $\hatE$ be the formal group of the minimal model of $E$ over $\ccO$.  Let $\lambda:=\log_{\hatE}$ denote the logarithm of $\hatE$ which is normalized by $\lambda'(0)=1$. 

 \vspace{2mm}

 Let $\Xi$ denote the set of finite characters of $\Gamma$. For $\chi\in\Xi$, we say that $\chi$ is of order $p^n$ if it factors through $G_n$, but not $G_{n-1}$. Define
\begin{align*}
    \Xi^+&=\left\{\chi\in\Xi\mid \text{the order of }\chi \text{ is a positive even power of } p\right\};\\
    \Xi^-&=\left\{\chi\in\Xi\mid \text{the order of }\chi \text{ is an odd power of } p\right\}\cup\{\mathbf{1}\}.
\end{align*}
Here, $\mathbf{1}$ is the trivial character.

\vspace{2mm}

For $n\geq 0$, let $\Xi_n^\pm$ denote the set of $\chi\in \Xi^\pm$ factoring through $\gal(k_n/k)$. For $\chi\in \Xi^\pm_n$ and $x\in\hatE(\fram_n)$, let
\begin{equation*}
    \lambda_\chi(x):=\frac{1}{p^n}\sum_{\sigma\in \gal(k_n/k)}\chi^{-1}(\sigma)\lambda(x)^\sigma.
\end{equation*}
Define
\begin{equation*}
    \hatE^\pm(\fram_n)= \left\{x\in \hatE(\fram_n) \mid \lambda_\chi(x)=0 \text{ for all } \chi\in \Xi_n^\mp\right\}.
\end{equation*}

\begin{lemma}\label{tracecompability}
    For $n\geq 0$, there exist $c_n^+,c_n^-\in \hatE(\fram_n)$ such that:
    \begin{enumerate}[label=(\roman*)]
        \item If $(-1)^{n+1}=\pm1$ and $n\ge1$, then
        \begin{equation*}
            \tr_{n+1/n}c_{n+1}^\pm= c_{n-1}^\pm ,\quad c_n^\pm=c_{n-1}^\pm,
        \end{equation*}
where $\tr_{n+1/n}: \hatE(\fram_{n+1})\longrightarrow \hatE(\fram_n)$ is the trace map.
 \item We have $c_n^\pm\in\hatE^\pm(\fram_n)$.
    \end{enumerate}
   
\end{lemma}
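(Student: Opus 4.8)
The plan is to follow Kobayashi's construction of norm-compatible plus and minus local points \cite[\S8]{kob03}, in the form adapted to the $p$-inert anticyclotomic setting by Burungale--Kobayashi--Ota \cite[\S2]{bko23}. The content of the statement is that the $c_n^\pm$ can be chosen so as to generate $\hatE^\pm(\fram_n)$ over $\ccO$ while satisfying exactly the indicated skip-one-level compatibilities; (i) and (ii) as literally stated then follow.

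First I would record the module-theoretic input. Since $p$ is inert in $K$, the local field $k$ is the unramified quadratic extension of $\bbQ_p$, $k_n/k$ is totally ramified of degree $p^n$, and $\ccO=\bbZ_{p^2}$. Because $a_p(E)=0$, the base change $\hatE_{/\ccO}$ acquires an action of $\ccO$ (up to isomorphism it is a relative Lubin--Tate formal group), so that $\hatE(\fram_n)$ is an $\ccO[\gal(k_n/k)]=\ccO[G_n]$-module, and the logarithm $\lambda$ induces an isomorphism $\hatE(\fram_n)\otimes_{\bbZ_p}\bbQ_p\cong k_n$ of $k[G_n]=k_n$-modules. Under this isomorphism the maps $\lambda_\chi$ become the $\chi$-eigenprojections, so $\hatE^\pm(\fram_n)\otimes_{\bbZ_p}\bbQ_p$ corresponds to the sum of the eigenspaces attached to the characters in $\Xi_n^\pm$.

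Next comes the construction, and with it (i) and (ii). Following \emph{loc.\ cit.}, one fixes a suitable coherent system of local points on $\hatE$ (coming from its formal division points) and defines $c_n^\pm$ as an explicit iterated trace of a point from a higher level of the appropriate parity. The reason that one application of $\tr_{n+1/n}$ connects level $n+1$ to level $n-1$, rather than to level $n$, is the relation underlying $a_p(E)=0$: two consecutive trace maps on $\hatE$ compose, up to units, to the operator that encodes this relation, so on a level of the ``wrong'' parity the trace skips a layer while on a level of the ``right'' parity it is a genuine norm. One then checks that the resulting $c_n^\pm$ is independent of the auxiliary level, lies in the span of the $\Xi_n^\pm$-eigenspaces---which is (ii)---and generates $\hatE^\pm(\fram_n)$ over $\ccO$. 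For (i): when $(-1)^{n+1}=\pm1$, level $n$ has the parity opposite to that of the $\pm$-system, so $\tr_{n+1/n}c_{n+1}^\pm$ bypasses level $n$ and equals $c_{n-1}^\pm$, while $c_n^\pm$ is simply the point inherited from level $n-1$; this is precisely the parity bookkeeping built into the definitions of $\Xi^+$ and $\Xi^-$.

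I expect the main obstacle to be the Honda-theoretic description of $\hatE(\fram_n)$ as a Galois module, together with the precise index and parity bookkeeping needed to guarantee that the $c_n^\pm$ are honest \emph{generators} of $\hatE^\pm(\fram_n)$ (over $\ccO$, and over $\ccO[G_n]$ on the levels of the correct parity) and satisfy the stated compatibilities exactly. This is where the hypothesis $a_p(E)=0$ and the $\bbZ_{p^2}$-module structure of the local points are indispensable; once the construction is correctly in place, (i) and (ii) are formal consequences.
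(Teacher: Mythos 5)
Your overall plan --- appeal to the construction of Burungale--Kobayashi--Ota --- is in fact exactly what the paper does: its proof of this lemma is a citation of \cite[Lemma~2.6]{bko23} for $p>3$, together with the remark that for $p=3$ the input \cite[Theorem~2.1]{bko21} must be replaced by \cite[Theorem~4.7]{YZ24}. The problem is that, read as a proof, your sketch reduces the substantive content of that citation to heuristics. You present the construction as a routine transplant of Kobayashi's cyclotomic recipe (division values of $\hatE$, the identity that two consecutive traces realize $a_p=0$, plus ``parity bookkeeping''), and then defer precisely the assertions to be proved --- that the resulting points lie in $\hatE^\pm(\fram_n)$ and satisfy the skip-level relations $\tr_{n+1/n}c^\pm_{n+1}=c^\pm_{n-1}$, $c_n^\pm=c_{n-1}^\pm$ --- to ``one then checks''. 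In the inert anticyclotomic setting the cyclotomic mechanism is not available as stated: the layers $k_n$ are not the division fields of $\hatE$; they sit inside the torsion-field tower of the relative Lubin--Tate $\ccO$-module $\hatE\otimes\ccO$ as a one-dimensional quotient line, and extracting a trace-compatible system along that line, landing in the $\Xi_n^\pm$-eigenspace kernels $\hatE^\pm(\fram_n)$, is the actual content of \cite[Lemma~2.6]{bko23}, not a formal consequence of the definitions of $\Xi^\pm$.

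Concretely, the missing ingredient is the one the paper itself flags: the construction of these local points rests on Rubin's conjecture on local units along the anticyclotomic tower at an inert prime, proved in \cite[Theorem~2.1]{bko21} for $p\ge5$, and this is exactly why the paper must invoke \cite[Theorem~4.7]{YZ24} to cover $p=3$. Your proposal neither identifies this input nor treats $p=3$ at all, so as written it would not yield the lemma for all odd primes claimed in the statement. (Two smaller points: the generation of $\hatE^\pm(\fram_n)$ that you discuss is not part of this lemma --- it is the separate Theorem~\ref{generatingtheorem} --- and your preliminary observations about the $\bbZ_{p^2}$-structure and the logarithm are correct but are only the starting point of the cited construction.) If your intention is a proof by citation, you should say so and verify the hypotheses of the cited statement, including the $p=3$ case; if your intention is to reprove the construction, the argument must engage with the local-unit structure theorem rather than with the cyclotomic trace identity.
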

\begin{proof}
    This is \cite[Lemma 2.6]{bko23} when $p>3$. The case where $p=3$ follows from the same proof after replacing \cite[Theorem 2.1]{bko21} by \cite[Theorem 4.7]{YZ24}.
\end{proof}

\vspace{2mm}

Define the Iwasawa algebra $\Lambda_\ccO:=\ccO[[\Gamma]]$ and set $\Lambda_{\ccO,n}:= \Lambda_\ccO\big/(\omega_n)=\ccO[G_n]$, where $\omega_n=(1+X)^{p^n}-1$ for $n\geq 0$.

\begin{theorem}\label{generatingtheorem}
    Let $n\geq 0$ be an integer.
    \begin{enumerate}[label=(\roman*)]
        \item As $\Lambda_{\ccO,n}$-modules, we have $\hatE(\fram_n)=\hatE^+(\fram_n)\oplus \hatE^-(\fram_n)$.
        \item $\hatE^\pm(\fram_n)$ is generated by $c_n^\pm$ as a $\Lambda_{\ccO,n}$-module.
    \end{enumerate}
\end{theorem}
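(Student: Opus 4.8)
The statement is the inert-prime analogue of Kobayashi's structural results for the cyclotomic $\Zp$-extension (\cite[\S8]{kob03}), so I would follow that template, adapted via the local points of \cite{bko23}. The proof has three parts: torsion-freeness of $\hatE(\fram_\infty)$, a rank computation over $\barQ_p$, and an induction on $n$ driven by Lemma~\ref{tracecompability}. First I would prove that $\hatE(\fram_\infty)$ — hence every $\hatE(\fram_n)$ — is torsion-free, i.e.\ $E[p^\infty](k_\infty)=0$. Since $E$ has good supersingular reduction at $p$, the inertia subgroup of $G_{\bbQ_p}$ acts on $E[p]$ through a non-split Cartan subgroup of $\gl_2(\bbF_p)$ of order $p^2-1$, so $\bbQ_p^{\mathrm{nr}}(E[p])/\bbQ_p^{\mathrm{nr}}$ is totally (tamely) ramified of degree $p^2-1$, which is prime to $p$. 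As $k_\infty\bbQ_p^{\mathrm{nr}}/\bbQ_p^{\mathrm{nr}}$ is a totally ramified $\Zp$-extension it is linearly disjoint from $\bbQ_p^{\mathrm{nr}}(E[p])$ over $\bbQ_p^{\mathrm{nr}}$, whence $E[p](k_\infty)\subseteq E[p](\bbQ_p^{\mathrm{nr}})=0$ because a non-split Cartan fixes no nonzero vector. Since $\hatE[p^\infty]=E[p^\infty]$ by supersingularity and $\hatE(\fram_\infty)$ is pro-$p$, its torsion subgroup equals $E[p^\infty](k_\infty)=0$.

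Granting this, I would record the consequences. Being finitely generated and torsion-free over the DVR $\ccO$, $\hatE(\fram_n)$ is free over $\ccO$ of rank $\tfrac12[k_n:\bbQ_p]=p^n$, and $\lambda=\log_{\hatE}$ is injective, $\ccO$-linear and $\Gamma$-equivariant on it. Thus: (a) if $x\in\hatE^+(\fram_n)\cap\hatE^-(\fram_n)$ then $\lambda_\chi(x)=0$ for every character $\chi$ of $G_n$, so $\lambda(x)=0$ and $x=0$, i.e.\ the sum $\hatE^+(\fram_n)+\hatE^-(\fram_n)$ is direct; (b) extending scalars along $\ccO\hookrightarrow\barQ_p$ and using the normal basis theorem for $k_n/k$ together with $\lambda$, one sees $\hatE(\fram_n)\otimes\barQ_p$ is the sum of its $\chi$-isotypic pieces $W_\chi$, each free of rank one over $\ccO\otimes\barQ_p$, with $\hatE^\pm(\fram_n)\otimes\barQ_p=\bigoplus_{\chi\in\Xi_n^\pm}W_\chi$, so $\hatE^\pm(\fram_n)$ is free over $\ccO$ of rank $|\Xi_n^\pm|$ and $\hatE^+(\fram_n)\oplus\hatE^-(\fram_n)$ has finite index in $\hatE(\fram_n)$; and (c) for $x\in\hatE(\fram_{n-1})$ one checks $\lambda_\chi(x)=0$ whenever $\chi$ has exact order $p^n$, which together with torsion-freeness of $\hatE(\fram_n)/\hatE(\fram_{n-1})$ (if $p^ax\in\hatE(\fram_{n-1})$ then $\lambda(x)\in k_{n-1}$, forcing $x\in\hatE(\fram_{n-1})$) yields $\hatE^\pm(\fram_n)\cap\hatE(\fram_{n-1})=\hatE^\pm(\fram_{n-1})$ and, for the sign $\epsilon:=(-1)^{n+1}$ (for which $\Xi^\epsilon$ does not grow at level $n$), the equality $\hatE^\epsilon(\fram_n)=\hatE^\epsilon(\fram_{n-1})$.

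It then remains to prove $\hatE^\pm(\fram_n)=\Lambda_{\ccO,n}c_n^\pm$ and that the index in (b) is $1$; I would do this by induction on $n$. The case $n=0$ is explicit: $\Xi_0^+=\emptyset$ gives $\hatE^+(\fram)=\hatE(\fram)_{\tor}=0=\Lambda_{\ccO,0}c_0^+$, and $\hatE^-(\fram)=\hatE(\fram)$ is free of rank one over $\ccO$, generated by $c_0^-$ by its construction in \cite[Lemma~2.6]{bko23}. Fix $n\ge1$ and $\epsilon=(-1)^{n+1}$. For the non-growing sign $\epsilon$, the previous paragraph and Lemma~\ref{tracecompability} give $\hatE^\epsilon(\fram_n)=\hatE^\epsilon(\fram_{n-1})=\Lambda_{\ccO,n-1}c_{n-1}^\epsilon=\Lambda_{\ccO,n}c_n^\epsilon$ (using $c_n^\epsilon=c_{n-1}^\epsilon$ and that $G_n$ acts on $\hatE(\fram_{n-1})$ through $G_{n-1}$). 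For the growing sign $-\epsilon$, Lemma~\ref{tracecompability} gives $\tr_{n/n-1}(c_n^{-\epsilon})=c_{n-2}^{-\epsilon}$, and one checks that $\tr_{n/n-1}$ carries both $\hatE^{-\epsilon}(\fram_n)$ and its submodule $\Lambda_{\ccO,n}c_n^{-\epsilon}$ onto $\hatE^{-\epsilon}(\fram_{n-1})=\Lambda_{\ccO,n-1}c_{n-2}^{-\epsilon}$; a Nakayama argument over the local ring $\Lambda_{\ccO,n}=\ccO[G_n]$ — showing that $\hatE^{-\epsilon}(\fram_n)/(p,\gamma-1)\hatE^{-\epsilon}(\fram_n)$ is generated by the image of $c_n^{-\epsilon}$, using this surjectivity and the inductive hypothesis at level $n-1$ — upgrades $\Lambda_{\ccO,n}c_n^{-\epsilon}\subseteq\hatE^{-\epsilon}(\fram_n)$ to equality, which is (ii). Comparing $\ccO$-ranks with (b) then forces the finite-index inclusion $\hatE^+(\fram_n)\oplus\hatE^-(\fram_n)\subseteq\hatE(\fram_n)$ to be an equality, giving (i).

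\textbf{Main obstacle.} The delicate part is the growing-sign step: one has to confirm that $c_n^{-\epsilon}$ is sufficiently non-degenerate — its image in each $\chi$-component, $\chi\in\Xi_n^{-\epsilon}$, is nonzero in both $\ccO$-embeddings, so that $\Lambda_{\ccO,n}c_n^{-\epsilon}$ attains the full rank $|\Xi_n^{-\epsilon}|$ — and one must control the $G_n$-coinvariants of $\hatE^{-\epsilon}(\fram_n)$ precisely enough to conclude equality rather than mere finite index in the Nakayama step. Both inputs rely on the explicit construction of the trace-compatible points in \cite{bko21,bko23,YZ24} and on the $\ZZ_{p^2}$-module (Lubin--Tate-type) structure of $\hatE$ that is available because $a_p(E)=0$; the rest is bookkeeping the alternation of the roles of $+$ and $-$ with the parity of $n$.
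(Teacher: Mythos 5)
There is a genuine gap, and it is worth noting first that the paper does not prove this statement at all: Theorem~\ref{generatingtheorem} is quoted verbatim from \cite[Theorem 2.7]{bko23} (with \cite[Theorem 4.7]{YZ24} replacing \cite[Theorem 2.1]{bko21} when $p=3$), and its proof there rests on Rubin's conjecture on the structure of local units, not on formal manipulations with the trace relations. Your reductions are fine as far as they go: torsion-freeness of $\hatE(\fram_\infty)$ via the non-split Cartan image of inertia, directness of the sum, the isotypic rank count over $\barQ$-coefficients, and the non-growing-sign identity $\hatE^\epsilon(\fram_n)=\hatE^\epsilon(\fram_{n-1})$ are all correct formal consequences of torsion-freeness and the definitions. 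But the heart of the theorem is precisely the part you defer in your ``Main obstacle'' paragraph, namely the primitivity/non-degeneracy of $c_n^\pm$, and this cannot be extracted from Lemma~\ref{tracecompability}: the points $p\,c_n^\pm$ satisfy every property listed in that lemma, yet generate a proper finite-index $\Lambda_{\ccO,n}$-submodule, so no argument using only the trace compatibilities and rank counts can yield (ii), nor the integral (rather than finite-index) decomposition in (i). Your base case already illustrates the problem: the assertion that $c_0^-$ generates $\hatE(\fram_0)$ ``by its construction in \cite[Lemma 2.6]{bko23}'' is not part of that lemma; it is part of \cite[Theorem 2.7]{bko23} itself (compare Remark~\ref{rk:pmGroups}(i) of the paper, which deduces it \emph{from} Theorem~\ref{generatingtheorem}), so as written the induction is circular at its starting point.

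The growing-sign step has two further unproved claims. First, surjectivity of $\tr_{n/n-1}$ from $\hatE^{-\epsilon}(\fram_n)$ onto $\hatE^{-\epsilon}(\fram_{n-1})$ is asserted but not shown; trace maps on formal groups of supersingular curves along ramified $p$-towers are typically far from surjective (this failure is the reason plus/minus theory exists), and here surjectivity is essentially equivalent to the generation statement being proved. Second, even granting it, Nakayama requires $\hatE^{-\epsilon}(\fram_n)=\Lambda_{\ccO,n}c_n^{-\epsilon}+(p,\gamma-1)\hatE^{-\epsilon}(\fram_n)$, and your argument only controls elements modulo $\ker(\tr_{n/n-1})$; one would still need $\ker(\tr_{n/n-1})\subseteq (p,\gamma-1)\hatE^{-\epsilon}(\fram_n)+\Lambda_{\ccO,n}c_n^{-\epsilon}$, which does not follow, since the kernel of multiplication by $\omega_n/\omega_{n-1}$ on a $\Lambda_{\ccO,n}$-module can strictly contain $\omega_{n-1}\cdot\hatE^{-\epsilon}(\fram_n)$. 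Closing these gaps requires the explicit construction of the points and the $\ZZ_{p^2}$-linear structure of $\hatE$ via Rubin's conjecture, i.e.\ the actual content of \cite{bko21,bko23,YZ24}; if you intend to cite that, the honest proof is the paper's one-line citation, and if you intend to reprove it, the non-degeneracy and the trace-surjectivity must be established, not assumed.
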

\begin{proof}
    This is \cite[Theorem 2.7]{bko23} when $p>3$. Once again, we replace the input of \cite[Theorem 2.1]{bko21} by \cite[Theorem 4.7]{YZ24} in the case where $p=3$.
\end{proof}

\vspace{2mm}

\begin{remark}\label{rk:pmGroups}

\begin{itemize}
 \item[(i)] Note that $\Xi_0^+=\Xi_1^+=\emptyset$. Theorem~\ref{generatingtheorem} tells us that $c_0^-$ is an $\ccO$-basis of $\hatE(\fram_0)=\hatE^-(\fram_0)$, whereas $\hatE^+(\fram_0)=\hatE^+(\fram_1)=\{0\}$. 
      \item[(ii)] Let $n\ge 0$ be an integer and  $x\in\hatE(\fram_n)$. We can write uniquely $$\lambda(x)=\sum_{i=0}^nx_i,$$ where $x_i$ is an element of the kernel of the trace map $\mathrm{Tr}_{i/i-1}:k_i\rightarrow k_{i-1}$ for $i\ge1$ and $x_0\in k_{0}$. If $\chi\in\Xi$ is a character of order $p^i$, then $\lambda_\chi(x)=0$ if and only if $x_i=0$. This implies that
      \[
      \hatE^-(\fram_n)=\left\{x\in\hatE(\fram_n):\mathrm{Tr}_{n/i}\lambda(x)\in k_{i-1}, i\in 2\ZZ\cap[0,n]\right\}.
      \]
      In other words, $\hatE^-(\fram_n)$ can be described in the same manner as the cyclotomic counterpart given in \cite[Definition 8.16]{kob03}. However, this does not apply to $\hatE^+(\fram_n)$. In fact, both plus and minus subgroups defined in \textit{loc. cit.} contain $\hatE(p\Zp)$, whereas the group $\hatE^+(\fram_n)$ defined here does not contain any non-zero elements of $\hatE(\fram_0)$. 
       \item[(iii)]The plus and minus subgroups studied here are defined in the same way as those studied in \cite{rub87loc,AH,bko21,bko23,BLV,bbl24}. The necessity of divergence from the cyclotomic setting is related to the presence of an exceptional zero in one of the two $p$-adic $L$-functions. This phenomenon is discussed in detail in \cite{BLV}, where  for further details (see in particular the formulation of the Iwasawa main conjecture for the "exceptional case").
\end{itemize}
\end{remark}

\subsection{Local Tate pairing}
Let $T$ be the $p$-adic Tate module of $E$.  We regard $\hatE(\fram_n) \subseteq H^1(k_n , T )$ via the Kummer
map. Let 
 \begin{equation}\label{kummerpairing}
     \langle-,-\rangle_{n}: \hatE(\fram_n)\times  H^1(k_n,T) \xrightarrow{\;\; \cup\;\;} H^2(k_n,\ccO(1))\xrightarrow{\;\;\sim\;\;} \ccO
 \end{equation}
 denote cup-product pairing.
 
\vspace{2mm}

\begin{definition}\label{def:pairing}
    For $c\in\hatE(\fram_n)$, we define the $\ccO[G_n]$-morphism
 \begin{align*}
     P_{n,c}: H^1(k_n,T)&\longrightarrow \ccO[G_n]\\ 
      z&\mapsto \sum_{\sigma \in G_n} \langle c^\sigma,z\rangle_{n}\cdot\sigma.
 \end{align*}
\end{definition}
 
 Furthermore, as in \cite[Lemma 8.15]{kob03}, the following lemma holds.
 
 \begin{lemma}\label{compatibility}
      The following diagram commutes:
     \begin{center}
  \begin{tikzpicture}
   \node (Q1) at (0,2) {$H^1(k_n,T)$};
     \node (Q2) at (4,2) {$\Lambda_{\ccO,n}$};
      \node (Q3) at (0,0) {$H^1(k_{n-1},T)$};
    \node (Q4) at (4,0) {$\Lambda_{\ccO,n-1}$};

    \draw[->] (Q1)--node [above] {$P_{n,c}$}(Q2);
      \draw[->] (Q1)--node [left] {$\cor_{k_n/k_{n-1}}$}(Q3);
    \draw[->] (Q3)--node [above] {$P_{n-1,\tr_{n/n-1}(c)}$}(Q4);
    \draw[->] (Q2)--node [right] {$\pr$}(Q4);
           \end{tikzpicture}
\end{center}
where the vertical maps are the corestriction and the projection.
 
 \end{lemma}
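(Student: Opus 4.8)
The plan is to check commutativity of the square coefficient by coefficient, reducing everything to the standard compatibility of the local cup-product pairing with corestriction. First I would note that $P_{n,c}$ is $\ccO[G_n]$-linear (as recorded in Definition~\ref{def:pairing}), that $\pr$ is $\ccO[G_n]$-linear once $\Lambda_{\ccO,n-1}$ is viewed as an $\ccO[G_n]$-module through $\pr$ itself, and that consequently the lower composite $P_{n-1,\tr_{n/n-1}(c)}\circ\cor_{k_n/k_{n-1}}$ is also $\ccO[G_n]$-linear. Hence it suffices, for an arbitrary $z\in H^1(k_n,T)$ and each $\tau\in G_{n-1}$, to compare the coefficient of $\tau$ in the two composites. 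Writing $H=\gal(k_n/k_{n-1})$ and identifying $G_{n-1}=G_n/H$, the coefficient of $\tau$ in $\pr(P_{n,c}(z))$ is $\sum_{\sigma\mapsto\tau}\langle c^\sigma,z\rangle_n$, whereas the coefficient of $\tau$ in $P_{n-1,\tr_{n/n-1}(c)}(\cor_{k_n/k_{n-1}}(z))$ is $\langle(\tr_{n/n-1}c)^\tau,\cor_{k_n/k_{n-1}}(z)\rangle_{n-1}$, so the lemma amounts to the identity
\[
\langle(\tr_{n/n-1}c)^\tau,\cor_{k_n/k_{n-1}}(z)\rangle_{n-1}=\sum_{\sigma\mapsto\tau}\langle c^\sigma,z\rangle_n.
\]

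The two inputs I would use are: first, functoriality of the Kummer map, which identifies the trace $\tr_{n/n-1}\colon\hatE(\fram_n)\to\hatE(\fram_{n-1})$ with the restriction to Kummer images of $\cor_{k_n/k_{n-1}}\colon H^1(k_n,T)\to H^1(k_{n-1},T)$, and identifies the inclusion $\hatE(\fram_{n-1})\hookrightarrow\hatE(\fram_n)$ with restriction; and second, the projection formula $\cor_{k_n/k_{n-1}}(\res(x)\cup y)=x\cup\cor_{k_n/k_{n-1}}(y)$ for cup products, together with the fact that the invariant isomorphisms $H^2(k_n,\ccO(1))\xrightarrow{\sim}\ccO$ and $H^2(k_{n-1},\ccO(1))\xrightarrow{\sim}\ccO$ appearing in \eqref{kummerpairing} are compatible with corestriction (local class field theory, with $\ccO(1)$ obtained from $\ZZ_p(1)$ by extension of scalars along $\ZZ_p\hookrightarrow\ccO=\ZZ_{p^2}$). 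Combining these, one obtains: for any $a\in\hatE(\fram_{n-1})$, regarded also inside $\hatE(\fram_n)$, and any $z\in H^1(k_n,T)$, the relation $\langle a,z\rangle_n=\langle a,\cor_{k_n/k_{n-1}}(z)\rangle_{n-1}$.

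To conclude, I would fix a lift $\tilde\tau\in G_n$ of $\tau$ and use that $G_n$ is abelian to write $(\tr_{n/n-1}c)^\tau=\bigl(\sum_{h\in H}c^h\bigr)^{\tilde\tau}=\sum_{\sigma\in\tilde\tau H}c^\sigma$; this element is $H$-invariant, hence lies in $\hatE(\fram_{n-1})$. Applying the relation of the previous step with $a=\sum_{\sigma\in\tilde\tau H}c^\sigma$ then gives
\[
\langle(\tr_{n/n-1}c)^\tau,\cor_{k_n/k_{n-1}}(z)\rangle_{n-1}=\Bigl\langle\sum_{\sigma\in\tilde\tau H}c^\sigma,z\Bigr\rangle_n=\sum_{\sigma\mapsto\tau}\langle c^\sigma,z\rangle_n,
\]
which is precisely the required identity. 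There is no genuinely hard step here, as reflected by the fact that this is the analogue in our setting of \cite[Lemma 8.15]{kob03}; the only thing demanding care is keeping the identifications straight, namely the trace/corestriction dictionary on Kummer images and the compatibility of the local invariant maps with corestriction, after which the projection formula does all the work.
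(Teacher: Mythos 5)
Your argument is correct and is essentially the paper's own proof: the paper simply defers to \cite[Lemma 8.15]{kob03}, whose argument is exactly this coefficient-by-coefficient comparison, reducing to the identity $\langle a,\cor_{k_n/k_{n-1}}(z)\rangle_{n-1}=\langle a,z\rangle_n$ for $a\in\hatE(\fram_{n-1})$ via the projection formula and the compatibility of the local invariant maps with corestriction, plus the observation that $(\tr_{n/n-1}c)^{\tau}=\sum_{\sigma\mapsto\tau}c^{\sigma}$. No gaps to flag.
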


 \vspace{2mm}

 \begin{lemma}\label{plusminusinjective}
     The Kummer map induces an injection $\hatE^\pm(\fram_n)\otimes\bbQ_p/\bbZ_p \hookrightarrow H^1(k_n,E[p^\infty])$.
 \end{lemma}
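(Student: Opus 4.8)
The plan is to reduce the claim to the injectivity of the Kummer map on a $p$-divisible hull of $\hatE^\pm(\fram_n)$ and then to the fact that $\hatE^\pm(\fram_n)$ has no $p$-torsion, combined with a finiteness/torsion argument for the relevant $H^1$. More precisely, for any prime $v$ and $p$-adic Tate module $T$ of an elliptic curve $E/k_n$, the Kummer sequence gives an exact sequence
\begin{equation*}
0\longrightarrow \hatE(\fram_n)\otimes\bbQ_p/\bbZ_p\longrightarrow H^1(k_n,E[p^\infty])\longrightarrow H^1(k_n,E)[p^\infty]\longrightarrow 0,
\end{equation*}
so the full group $\hatE(\fram_n)\otimes\bbQ_p/\bbZ_p$ injects into $H^1(k_n,E[p^\infty])$; the content is to cut this down compatibly with the $\pm$-decomposition. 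By Theorem~\ref{generatingtheorem}(i) we have the $\Lambda_{\ccO,n}$-module decomposition $\hatE(\fram_n)=\hatE^+(\fram_n)\oplus\hatE^-(\fram_n)$, hence
\begin{equation*}
\hatE(\fram_n)\otimes\bbQ_p/\bbZ_p=\bigl(\hatE^+(\fram_n)\otimes\bbQ_p/\bbZ_p\bigr)\oplus\bigl(\hatE^-(\fram_n)\otimes\bbQ_p/\bbZ_p\bigr),
\end{equation*}
and each summand injects into $H^1(k_n,E[p^\infty])$ as a direct summand of the image of $\hatE(\fram_n)\otimes\bbQ_p/\bbZ_p$.

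The one subtlety is that tensoring with $\bbQ_p/\bbZ_p$ is right exact, not left exact, so one should check that $\hatE^\pm(\fram_n)$ is a free $\ccO$-module (equivalently $\ZZ_p$-free), so that $-\otimes\bbQ_p/\bbZ_p$ behaves well and the decomposition above is genuine. This follows because $\hatE(\fram_n)$ itself is $\ZZ_p$-free (it is the group of $\fram_n$-points of a formal group over a $p$-adic ring, hence torsion-free, and finitely generated over $\ccO$), and a direct summand of a free module over the PID $\ZZ_p$ is free. With this in hand the displayed direct sum decomposition of $\hatE(\fram_n)\otimes\bbQ_p/\bbZ_p$ is immediate and each $\hatE^\pm(\fram_n)\otimes\bbQ_p/\bbZ_p$ maps injectively into $H^1(k_n,E[p^\infty])$ as the composite of a split injection with the injection from the Kummer sequence.

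I would write this up in three short steps: first record the Kummer exact sequence and conclude $\hatE(\fram_n)\otimes\bbQ_p/\bbZ_p\hookrightarrow H^1(k_n,E[p^\infty])$; second, note $\hatE^\pm(\fram_n)$ is $\ZZ_p$-free using Theorem~\ref{generatingtheorem}(i) and the torsion-freeness of formal group points; third, apply $-\otimes\bbQ_p/\bbZ_p$ to the splitting of Theorem~\ref{generatingtheorem}(i) and compose. The main (minor) obstacle is purely bookkeeping: making sure the $\pm$-decomposition is compatible with the Kummer map, i.e.\ that the image of $\hatE^\pm(\fram_n)\otimes\bbQ_p/\bbZ_p$ in $H^1(k_n,E[p^\infty])$ really is the expected summand and that no $p$-torsion is introduced when passing through the tensor product; both are handled by the freeness observation. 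There is no serious analytic or arithmetic input beyond what Theorem~\ref{generatingtheorem} already provides.
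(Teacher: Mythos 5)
Your argument is correct, and it is essentially self-contained where the paper simply defers to \cite[Lemma 8.17]{kob03}. In fact your route is the natural one in this inert anticyclotomic setting and is cleaner than a literal adaptation of Kobayashi's proof: in Kobayashi's cyclotomic situation the plus and minus subgroups are \emph{not} direct summands (their intersection contains $\hatE(p\Zp)$), so his lemma has to establish that $E^\pm(k_n)$ is $p$-saturated in $E(k_n)$ via the trace-condition description and torsion-freeness of local points, and only then conclude injectivity after tensoring with $\bbQ_p/\bbZ_p$. Here Theorem~\ref{generatingtheorem}(i) gives a genuine $\Lambda_{\ccO,n}$-module decomposition $\hatE(\fram_n)=\hatE^+(\fram_n)\oplus\hatE^-(\fram_n)$, so the inclusion $\hatE^\pm(\fram_n)\subseteq\hatE(\fram_n)$ is split, and a split injection is preserved by \emph{any} additive functor, in particular by $-\otimes\bbQ_p/\bbZ_p$; composing with the Kummer injection $\hatE(\fram_n)\otimes\bbQ_p/\bbZ_p\hookrightarrow H^1(k_n,E[p^\infty])$ finishes the proof. (This is the same mechanism the paper later uses in Lemma~\ref{kobayashirankofy'andy''}.) Two small remarks. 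First, your ``subtlety'' about right-exactness is actually vacuous: you do not need $\hatE^\pm(\fram_n)$ to be $\bbZ_p$-free, precisely because the injection is split. Second, your justification for torsion-freeness is not correct as stated: points of a formal group over a ramified $p$-adic ring can very well have $p$-torsion (e.g.\ $\zeta_p-1$ is a $p$-torsion point of $\widehat{\bbG}_m$ over $\bbZ_p[\zeta_p]$). In the present situation $\hatE(\fram_n)[p]=0$ does hold, but for an arithmetic reason: since $a_p(E)=0$ and $p$ is inert, the extension $k(\hatE[p])/k$ contains a totally ramified subextension of degree $p^2-1$, which is prime to $p$, while $k_n/k$ is a $p$-extension; alternatively one can cite the argument of \cite[Lemma 2.1]{ip06}. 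Since this freeness plays no role in the corrected argument, the slip is harmless, but it should be removed or repaired in a final write-up.
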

\begin{proof}
See \cite[Lemma~8.17]{kob03}.
\end{proof}

\vspace{2mm}

\begin{definition}
     For $\bullet\in\{+,-\}$, we define $H^1_\bullet(k_n,T)$ as the orthogonal complement of $\hatE^\pm(\fram_n)\otimes\bbQ_p/\bbZ_p\subseteq H^1(k_n,E[p^\infty])$ with respect to the local Tate pairing
       \begin{equation*}
      H^1(k_n,T)\times  H^1(k_n,E[p^\infty]) \longrightarrow k/\ccO.
      \end{equation*}
\end{definition}

\vspace{2mm}

\begin{prop}\label{kernelofthepairing}
    Define $P_n^\pm:=P_{n,c_n^\pm}$. Then the kernel of $P_n^\pm$ is $H^1_\pm(k_n,T)$.
\end{prop}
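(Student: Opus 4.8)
The plan is to identify the kernel of $P_n^\pm = P_{n,c_n^\pm}$ directly using the defining property of $\hatE^\pm(\fram_n)$ as a cyclic $\Lambda_{\ccO,n}$-module and the nondegeneracy of the local Tate pairing. First I would recall that, by Theorem~\ref{generatingtheorem}(ii), the submodule $\hatE^\pm(\fram_n)\subseteq \hatE(\fram_n)$ is generated over $\ccO[G_n]$ by the single element $c_n^\pm$; hence the $\ccO$-span of the Galois conjugates $\{(c_n^\pm)^\sigma : \sigma\in G_n\}$ is exactly $\hatE^\pm(\fram_n)$. Now for $z\in H^1(k_n,T)$, the element $P_n^\pm(z) = \sum_{\sigma\in G_n}\langle (c_n^\pm)^\sigma, z\rangle_n\,\sigma \in \ccO[G_n]$ vanishes if and only if $\langle (c_n^\pm)^\sigma, z\rangle_n = 0$ for every $\sigma\in G_n$, which by the previous remark is equivalent to $\langle c, z\rangle_n = 0$ for all $c\in\hatE^\pm(\fram_n)$.

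Next I would translate this orthogonality condition, phrased via the cup-product pairing \eqref{kummerpairing} valued in $\ccO$, into orthogonality with respect to the perfect local Tate pairing $H^1(k_n,T)\times H^1(k_n,E[p^\infty])\to k/\ccO$ used in the definition of $H^1_\pm(k_n,T)$. This is the standard compatibility between the $\ccO$-valued Kummer cup product on $\hatE(\fram_n)\times H^1(k_n,T)$ and the $k/\ccO$-valued Tate pairing, after composing with the Kummer map $\hatE^\pm(\fram_n)\otimes\bbQ_p/\bbZ_p \hookrightarrow H^1(k_n,E[p^\infty])$ of Lemma~\ref{plusminusinjective}: the condition $\langle c,z\rangle_n = 0$ for all $c\in\hatE^\pm(\fram_n)$ says precisely that $z$ pairs trivially with the image of $\hatE^\pm(\fram_n)\otimes\bbQ_p/\bbZ_p$ in $H^1(k_n,E[p^\infty])$, i.e. that $z\in H^1_\pm(k_n,T)$. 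Combining the two equivalences gives $\ker P_n^\pm = H^1_\pm(k_n,T)$, as claimed. This argument is the $\ccO[G_n]$-analogue of \cite[Proposition~8.18]{kob03}, so I would cite that as the model.

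The main obstacle — really the only subtle point — is checking the compatibility of the two pairings carefully: one must verify that the cup product $\hatE(\fram_n)\times H^1(k_n,T)\xrightarrow{\cup} H^2(k_n,\ccO(1))\cong\ccO$ is, under the Kummer embedding of $\hatE(\fram_n)\otimes\bbQ_p/\bbZ_p$ into $H^1(k_n,E[p^\infty])$, the restriction of (a scalar multiple of) the Tate pairing $H^1(k_n,T)\times H^1(k_n,E[p^\infty])\to k/\ccO$, and in particular that no information is lost when passing from $\hatE^\pm(\fram_n)$ to $\hatE^\pm(\fram_n)\otimes\bbQ_p/\bbZ_p$. The key inputs here are Lemma~\ref{plusminusinjective} (injectivity of the Kummer map on the $\pm$-parts, which ensures the orthogonal complement of the image is genuinely cut out by the conditions $\langle c,z\rangle_n=0$) together with the perfectness of the Tate pairing, and these are exactly the facts already available in the excerpt. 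The remaining verifications — that $P_n^\pm$ is $\ccO[G_n]$-linear and that vanishing of $P_n^\pm(z)$ is coefficient-wise — are formal.
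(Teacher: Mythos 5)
Your argument is correct and is essentially the paper's: the paper simply invokes \cite[Proposition~8.18]{kob03}, and your proof (vanishing of $P_n^\pm(z)$ coefficient-wise, then using that $c_n^\pm$ generates $\hatE^\pm(\fram_n)$ over $\ccO[G_n]$ by Theorem~\ref{generatingtheorem}, then the standard compatibility of the $\ccO$-valued cup product with the $k/\ccO$-valued Tate pairing, using $p$-adic separatedness of $\ccO$) is exactly Kobayashi's argument transported to the $\ccO[G_n]$-setting. No gaps; the only remark is that Lemma~\ref{plusminusinjective} is not strictly needed for the equivalence, since the pairing compatibility alone identifies orthogonality to the image of $\hatE^\pm(\fram_n)\otimes\bbQ_p/\bbZ_p$ with the vanishing of all $\langle c,z\rangle_n$.
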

\begin{proof}
The proof is the same as that of \cite[Proposition 8.18]{kob03}.
\end{proof}

\vspace{2mm}

For $n\geq 0$, recall that $\omega_n=(1+X)^{p^n}-1=\displaystyle\prod_{0\le m\le n}\Phi_m$, where $\Phi_{m}$ is the $p^m$-th cyclotomic polynomial in $1+X$. We put
\begin{align}\label{cyclotomicpolynomials}
      \omega^+_n&:= X \prod_{\substack{1\leq m\leq n,\\ m: \text{ even}}} \Phi_{m} ,\\
    \omega_n^-&:=X \prod_{\substack{1\leq m\leq n,\;\\ m: \text{ odd}}} \Phi_{m},
\end{align}
and $\tomg_n^\pm:=\omega_n^\pm/X$ for $n\ge1$, and $\omega^+_0 := X$, $\tomg^-_0:=1$.\footnote{We have followed the convention introduced in \cite{kob03} since our proof of Theorem~\ref{maintheorem} is based on a generalization of the methods introduced therein. We caution the reader that different conventions exist in the literature. For example, our notation is slightly different from the one introduced in \cite{BLV}.} 

\vspace{2mm}

\begin{prop}\label{imageofthepairing}
    The image of $P_n^+$ is contained in $\omega_n^-\Lambda_{\ccO,n} $, whereas the image of $P_n^-$ is contained in $\tomg_n^+\Lambda_{\ccO,n}$.
\end{prop}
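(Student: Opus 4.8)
The plan is to mimic the proof of the analogous statement in Kobayashi's work (\cite[Proposition~8.22]{kob03}), using the trace-compatibility properties of the points $c_n^\pm$ from Lemma~\ref{tracecompability} together with the commutative diagram of Lemma~\ref{compatibility}. The key structural fact driving the argument is the following: if a point $c\in\hatE(\fram_n)$ satisfies $\tr_{n/n-1}(c)=c'$ for some $c'\in\hatE(\fram_{n-1})$ that itself comes from a lower level, then the image of $P_{n,c}$ is constrained to lie in the ideal generated by a suitable cyclotomic factor, because the diagram in Lemma~\ref{compatibility} forces $P_{n,c}(z)$ to project correctly to every intermediate level $\Lambda_{\ccO,m}$. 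Concretely, I would first recall that an element $f\in\Lambda_{\ccO,n}=\ccO[G_n]$ lies in $\omega_n^-\Lambda_{\ccO,n}$ (resp. $\tomg_n^+\Lambda_{\ccO,n}$) if and only if its image in $\Lambda_{\ccO,m}$ vanishes for every $m$ in the appropriate parity class in $[0,n]$; this is the ring-theoretic reformulation that turns the claim into a collection of vanishing statements, one at each level.

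Next I would run the induction on $n$. For the base cases one uses Remark~\ref{rk:pmGroups}(i): since $\Xi_0^+=\Xi_1^+=\emptyset$, one has $\hatE^+(\fram_0)=\hatE^+(\fram_1)=\{0\}$, so $c_0^+=c_1^+=0$ and $P_0^+=P_1^+=0$, which is trivially in $\omega_0^-\Lambda_{\ccO,0}$ (note $\omega_0^-=X$ and $X$ acts as $0$ mod $\omega_0$, consistent with $P_0^+=0$); similarly one checks the statement for $P_0^-$ and $P_1^-$ directly. For the inductive step, suppose $(-1)^{n+1}=+1$, i.e. $n$ is odd, so we are in the "$+$" case of Lemma~\ref{tracecompability}: $\tr_{n/n-1}c_n^+=c_{n-2}^+$ and $c_{n-1}^+=c_{n-2}^+$. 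Applying Lemma~\ref{compatibility} to $c=c_n^+$, the image of $P_n^+$ in $\Lambda_{\ccO,n-1}$ equals the image of $P_{n-1,\tr_{n/n-1}c_n^+}=P_{n-1,c_{n-2}^+}$. But $P_{n-1,c_{n-2}^+}$ factors through $P_{n-2}^+$ composed with the natural inflation $\Lambda_{\ccO,n-2}\to\Lambda_{\ccO,n-1}$ (using $c_{n-1}^+=c_{n-2}^+$ and Lemma~\ref{compatibility} again between levels $n-1$ and $n-2$), whose image, by the inductive hypothesis, lies in $\omega_{n-2}^-\Lambda_{\ccO,n-2}$; pushed up to level $n-1$ this still lies in $\omega_{n-2}^-\Lambda_{\ccO,n-1}$. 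Tracking which cyclotomic polynomials are picked up at each step — and checking the top-level constraint imposed by the fact that $c_n^+\in\hatE^+(\fram_n)$, which kills the characters in $\Xi_n^-$ and hence forces divisibility by the "new" factor $\Phi_n$ at level $n$ — assembles exactly the divisibility by $\omega_n^-$. The same argument with the roles of $+$ and $-$ (and the parity) swapped handles the $P_n^-$ case, yielding divisibility by $\tomg_n^+$ (the absence of the leading $X$ reflecting that $\hatE^-(\fram_0)\neq\{0\}$, so there is no constraint at level $0$).

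The main obstacle I expect is bookkeeping the precise set of cyclotomic polynomials one is entitled to divide by at each level, and in particular justifying the "new factor at the top level" step cleanly: one must argue that the vanishing of $\lambda_\chi(c_n^\pm)$ for all $\chi\in\Xi_n^\mp$ translates, via the pairing $\langle-,-\rangle_n$ and the definition of $P_n^\pm$, into the statement that the image of $P_n^\pm$ in the $\Phi_n$-component of $\Lambda_{\ccO,n}\otimes\bbQ_p$ vanishes. This requires relating the cup-product pairing to the logarithm $\lambda$ — essentially the observation that $\langle c^\sigma, z\rangle_n$ can be computed, after tensoring with $\bbQ_p$, in terms of $\lambda(c)^\sigma$ paired against a dual basis — so that the $\chi$-isotypic vanishing of $\lambda(c_n^\pm)$ transfers directly to the $\chi$-isotypic vanishing of $P_n^\pm(z)$ for every $z$. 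Once this dictionary between $\lambda_\chi$-vanishing and $\Phi$-divisibility is set up (it is implicit in Remark~\ref{rk:pmGroups}(ii)), the rest is a routine, if slightly tedious, induction, and I would present it as such rather than spelling out every level.
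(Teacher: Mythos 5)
Your plan can be made to work, but it routes everything through an induction that is not needed, and the step you single out as ``the main obstacle'' is in fact the entire proof. The paper's argument is exactly your top-level constraint, applied at \emph{every} level simultaneously: for any $\chi\in\Xi_n^{\mp}$, bilinearity and Galois-equivariance of $\langle-,-\rangle_n$ give
$P_n^{\pm}(z)(\chi)=\bigl\langle \sum_{\sigma\in G_n}\chi(\sigma)(c_n^{\pm})^{\sigma},\,z\bigr\rangle_n$,
and the twisted sum vanishes in $\hatE(\fram_n)\otimes\Qp$ because $\lambda_\chi(c_n^{\pm})=0$ and $\lambda$ is injective modulo torsion; since the pairing takes values in the torsion-free module $\ccO$ (extended to $\ccO[\zeta_{p^n}]$), the evaluation is $0$. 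No dual bases or explicit comparison of the cup product with $\lambda$ are required. Letting $\chi$ run over all of $\Xi_n^-$ (which contains $\mathbf{1}$) yields $X\mid P_n^+(z)$ and $\Phi_m\mid P_n^+(z)$ for every odd $m\le n$, i.e.\ the image of $P_n^+$ lies in $\omega_n^-\Lambda_{\ccO,n}$; running over $\Xi_n^+$ (even orders $\ge 2$) gives $\tomg_n^+\Lambda_{\ccO,n}$ for $P_n^-$. Once the dictionary is available at level $n$ it is available for all $m\le n$ at once, so the scaffolding via Lemmas~\ref{tracecompability} and~\ref{compatibility} and the induction buy nothing.

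If you nevertheless keep the induction, two points need repair. First, your membership criterion is stated incorrectly: $f\in\omega_n^-\Lambda_{\ccO,n}$ is not equivalent to the vanishing of the images of $f$ in the quotients $\Lambda_{\ccO,m}$ (that would amount to $\omega_m\mid f$); the correct criterion is $f\equiv 0 \bmod \Phi_m$ for the relevant $m$, i.e.\ vanishing at the characters of exact order $p^m$, together with $X\mid f$ in the plus case. Second, the parities in Lemma~\ref{tracecompability} are misapplied: $\tr_{n/n-1}c_n^+=c_{n-2}^+$ and $c_{n-1}^+=c_{n-2}^+$ are what the lemma gives when $n$ is \emph{even}; for $n$ odd it only gives $c_n^+=c_{n-1}^+$, so that $\tr_{n/n-1}c_n^+=[p]\,c_{n-1}^+$, a case the lemma does not state (the extra factor $p$ is harmless but must be accounted for), and the asserted factorization of $P_{n-1,c_{n-2}^+}$ through an ``inflation'' $\Lambda_{\ccO,n-2}\to\Lambda_{\ccO,n-1}$ is really multiplication by the relative norm element via the projection formula, not a ring map. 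Note finally that it is precisely at odd $n$ (in the plus case) that the new factor $\Phi_n$ must come from the character argument, which again shows that this argument carries all the content of Proposition~\ref{imageofthepairing}.
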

\begin{proof}
 The proof is similar to \cite[Proposition 8.19]{kob03}. We illustrate this for $P_n^+$. Let $\chi$ be a character of $G_n$ that belongs to $\Xi_n^-$. Then, $\chi$ sends $\gamma=X+1$ to $1$ or a $p^m$-th primitive root of unity for some odd non-negative integer $m\le n$. Since 
$c_n^+\in\hatE^+(\fram_n)$,
 \[
 P_{n}^+(z)(\chi)=\sum_{\sigma\in G_n}\langle \chi(\sigma)(c_n^+)^\sigma,z\rangle_n=0.
 \]
 Therefore, as an element of $\Lambda_{\ccO,n}$, $P_n^+(z)$ is divisible by $\Phi_m$ for all non-negative odd integers $m\le n$. Therefore, 
 $$P_{n}^+(z)\in X\prod_{\substack{1\leq m\leq n,\;\\ m: \text{ odd}}} \Phi_{m}\Lambda_{\ccO,n}=\omega_n^-\Lambda_{\ccO,n}.$$
 Similarly, we have
 $$P_{n}^-(z)\in \prod_{\substack{1\leq m\leq n,\;\\ m: \text{ even}}} \Phi_{m}\Lambda_{\ccO,n}=\tomg_n^+\Lambda_{\ccO,n}.$$
 \end{proof}
Note that $\omega_n^-\Lambda_{\ccO,n}\subset \tomg_n^-\Lambda_{\ccO,n}$. In particular, Proposition~\ref{imageofthepairing} tells us that the image of $P_n^+$ is contained in $\tomg_n^-\Lambda_{\ccO,n}$. We follow the strategy of \cite[\S8]{kob03} to use this weaker congruence to define the corresponding Coleman map in the following section.

\subsection{Signed Coleman maps}  
It is clear from the definitions that $\omega_n=\tomg_n^\mp\omega_n^\pm$. Thus, the multiplication by $\tomg_n^\mp$ induces an isomorphism
\begin{equation}\label{signediwasawaalgebra}
\Lambda_{\ccO,n}^\pm:=\Lambda_{\ccO}\big/\left(\omega_n^\pm \right)\xrightarrow{\;\;\sim\;\;} \tomg_n^\mp \Lambda_{\ccO,n}.
\end{equation}

\vspace{2mm}

\begin{lemma}
   There exist unique morphisms $\col_n^\pm$ such that the following diagram commutes:
\begin{center}
  \begin{tikzpicture}
   \node (Q1) at (0,2) {$H^1(k_n,T)$};
     \node (Q2) at (4,2) {$\Lambda_{\ccO,n}^\pm$};
      \node (Q3) at (0,0) {$\displaystyle\frac{H^1(k_n,T)}{H^1_\pm(k_n,T)}$};
    \node (Q4) at (4,0) {$\Lambda_{\ccO,n}$};

    \draw[->] (Q1)--node [above] {$\col_n^\pm$}(Q2);
      \draw[->] (Q1)--(Q3);
    \draw[->] (Q3)--node [above] {$P_{n}^\pm$}(Q4);
    \draw[right hook->] (Q2)--node [right] {$\times \tomg_n^\mp$}(Q4);
               \end{tikzpicture}
\end{center}
    
    \end{lemma}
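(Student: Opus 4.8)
The plan is to build $\col_n^\pm$ as the unique factorisation of the pairing map $P_n^\pm$ through the injection $\times\tomg_n^\mp$, the key inputs being Propositions~\ref{kernelofthepairing} and \ref{imageofthepairing} together with the isomorphism \eqref{signediwasawaalgebra}.

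First I would record that $P_n^\pm\colon H^1(k_n,T)\to\Lambda_{\ccO,n}$ kills $H^1_\pm(k_n,T)$ by Proposition~\ref{kernelofthepairing}, so it descends to a morphism — still denoted $P_n^\pm$ — on the quotient $H^1(k_n,T)/H^1_\pm(k_n,T)$; this is the bottom horizontal arrow of the diagram. Next, Proposition~\ref{imageofthepairing} together with the observation following it shows that the image of $P_n^+$ lies in $\tomg_n^-\Lambda_{\ccO,n}$ and that of $P_n^-$ lies in $\tomg_n^+\Lambda_{\ccO,n}$; that is, $\im(P_n^\pm)\subseteq\tomg_n^\mp\Lambda_{\ccO,n}$.

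Now the isomorphism \eqref{signediwasawaalgebra} identifies $\Lambda_{\ccO,n}^\pm$ with the submodule $\tomg_n^\mp\Lambda_{\ccO,n}$ of $\Lambda_{\ccO,n}$ via multiplication by $\tomg_n^\mp$. Composing the inverse of this isomorphism with the descended $P_n^\pm$ and with the quotient map $H^1(k_n,T)\twoheadrightarrow H^1(k_n,T)/H^1_\pm(k_n,T)$ then produces a morphism $\col_n^\pm\colon H^1(k_n,T)\to\Lambda_{\ccO,n}^\pm$, and by construction $\tomg_n^\mp\cdot\col_n^\pm$ coincides with $P_n^\pm$ composed with the quotient map, so the diagram commutes. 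For uniqueness, if $\col$ is any morphism making the diagram commute, then $\tomg_n^\mp\cdot\col=\tomg_n^\mp\cdot\col_n^\pm$ inside $\Lambda_{\ccO,n}$; since multiplication by $\tomg_n^\mp$ is injective on $\Lambda_{\ccO,n}^\pm$ by \eqref{signediwasawaalgebra}, we get $\col=\col_n^\pm$.

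There is no genuine obstacle here: the statement is a formal consequence of the two cited propositions and the isomorphism \eqref{signediwasawaalgebra}. The only subtlety worth flagging is that $\tomg_n^\mp$ is a zero-divisor in $\Lambda_{\ccO,n}$ (because $\omega_n=\tomg_n^\mp\omega_n^\pm$), so ``dividing by $\tomg_n^\mp$'' must be read as inverting the isomorphism \eqref{signediwasawaalgebra} onto its image, not as a literal division in $\Lambda_{\ccO,n}$; this is precisely why the natural target of $\col_n^\pm$ is the signed quotient $\Lambda_{\ccO,n}^\pm$ rather than $\Lambda_{\ccO,n}$ itself.
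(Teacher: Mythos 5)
Your proposal is correct and follows exactly the paper's argument: the paper proves this lemma by citing Propositions~\ref{kernelofthepairing} and \ref{imageofthepairing} together with the isomorphism \eqref{signediwasawaalgebra}, which is precisely the factorisation (and uniqueness via injectivity of multiplication by $\tomg_n^\mp$) that you spell out. Your remark that one uses the weaker containment $\im(P_n^+)\subseteq\tomg_n^-\Lambda_{\ccO,n}$ rather than $\omega_n^-\Lambda_{\ccO,n}$ matches the paper's own comment following Proposition~\ref{imageofthepairing}.
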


\begin{proof}
    This follows from Propositions \ref{kernelofthepairing} and \ref{imageofthepairing}, together with \eqref{signediwasawaalgebra}.
\end{proof}

\vspace{2mm}

\begin{prop}
    The following diagram commutes:
       \begin{center}
  \begin{tikzpicture}
   \node (Q1) at (0,2) {$H^1(k_{n+1},T)$};
     \node (Q2) at (4,2) {$\Lambda_{\ccO,n+1}^\pm$};
      \node (Q3) at (0,0) {$H^1(k_n,T)$};
    \node (Q4) at (4,0) {$\Lambda_{\ccO,n}^\pm$};

    \draw[->] (Q1)--node [above] {$\col_{n+1}^\pm$}(Q2);
      \draw[->] (Q1)--(Q3);
    \draw[->] (Q3)--node [above] {$\col_n^\pm$}(Q4);
    \draw[->] (Q2)--(Q4);
           \end{tikzpicture}
\end{center}
where the vertical maps are the corestriction and the projection.
\end{prop}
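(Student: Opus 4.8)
The statement is a compatibility of the signed Coleman maps $\col_n^\pm$ with corestriction on cohomology and the natural projection on the signed Iwasawa algebras. Since the $\col_n^\pm$ were defined implicitly (as the unique factorisation of $P_n^\pm$ through multiplication by $\tomg_n^\mp$), the natural route is to \emph{lift} the desired commutativity to a statement about the pairing maps $P_n^\pm$, where it has already been essentially recorded in Lemma~\ref{compatibility}, and then descend again using the injectivity of multiplication by $\tomg_n^\mp$ in \eqref{signediwasawaalgebra}.

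First I would assemble a cube of maps: the front face is the square in Lemma~\ref{compatibility} applied with $c=c_n^\pm$ at level $n+1$ (using $\tr_{n+1/n}c_{n+1}^\pm$ in place of $c$ on the bottom), the back face is the defining square of $\col_{n+1}^\pm$ and $\col_n^\pm$, and the connecting vertical arrows are the inclusions $\times\tomg_{n+1}^\mp$ and $\times\tomg_n^\mp$ into $\Lambda_{\ccO,n+1}$, $\Lambda_{\ccO,n}$ respectively. The key algebraic input is the trace compatibility of Lemma~\ref{tracecompability}: after adjusting for the parity of $n$, $\tr_{n+1/n}c_{n+1}^\pm$ differs from $c_n^\pm$ only by passing through $c_{n-1}^\pm$, so $P_{n,\tr_{n+1/n}c_{n+1}^\pm}$ and $P_n^\pm$ agree up to the natural projection maps between the $\Lambda_{\ccO,\bullet}$'s; this is exactly the content needed to glue the bottom edge of Lemma~\ref{compatibility} to the definition of $\col_n^\pm$. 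One must also check the elementary fact that the projection $\Lambda_{\ccO,n+1}\twoheadrightarrow\Lambda_{\ccO,n}$ carries $\tomg_{n+1}^\mp\Lambda_{\ccO,n+1}$ onto $\tomg_n^\mp\Lambda_{\ccO,n}$ and is compatible with the isomorphisms \eqref{signediwasawaalgebra}, i.e. that the diagram
\begin{equation*}
\begin{tikzcd}
\Lambda_{\ccO,n+1}^\pm \arrow[r,"\times\tomg_{n+1}^\mp"] \arrow[d] & \Lambda_{\ccO,n+1} \arrow[d,"\pr"] \\
\Lambda_{\ccO,n}^\pm \arrow[r,"\times\tomg_n^\mp"] & \Lambda_{\ccO,n}
\end{tikzcd}
\end{equation*}
commutes, where the left vertical map is the projection $\Lambda_\ccO/(\omega_{n+1}^\pm)\twoheadrightarrow\Lambda_\ccO/(\omega_n^\pm)$ (well-defined since $\omega_n^\pm\mid\omega_{n+1}^\pm$). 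This follows from $\omega_{n+1}^\pm=\omega_n^\pm\cdot(\text{a product of }\Phi_m)$ together with the identity $\tomg_{n+1}^\mp\cdot(\text{that product})=\tomg_n^\mp$ up to the relevant $\Phi_m$, which is an immediate consequence of $\omega_n=\tomg_n^\mp\omega_n^\pm$.

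Granting those two ingredients, the proof concludes formally: chasing $z\in H^1(k_{n+1},T)$ around the back face one gets $\tomg_n^\mp\cdot(\text{image of }\col_{n+1}^\pm(z)\text{ under projection})$ on one side and $\tomg_n^\mp\cdot\col_n^\pm(\cor(z))$ on the other, and these agree after multiplying by $\tomg_n^\mp$ by the front-face commutativity of Lemma~\ref{compatibility} combined with the trace relation of Lemma~\ref{tracecompability}; since multiplication by $\tomg_n^\mp$ is injective on $\Lambda_{\ccO,n}^\pm$ by \eqref{signediwasawaalgebra}, the two images in $\Lambda_{\ccO,n}^\pm$ coincide.

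The main obstacle is bookkeeping around the parity-dependent trace relation in Lemma~\ref{tracecompability}: the clean identity $\tr_{n+1/n}c_{n+1}^\pm=c_{n-1}^\pm$ holds only when $(-1)^{n+1}=\pm1$, and in the complementary parity one has instead $c_{n+1}^\pm=c_n^\pm$ as input, so the argument splits into the two cases $n$ even and $n$ odd and one must verify in each that the relevant composite $P_{n,\tr_{n+1/n}c_{n+1}^\pm}$ factors through $\col_n^\pm$ after projection. This is precisely the same subtlety handled in \cite[\S8]{kob03}, and no genuinely new idea is needed beyond transporting that argument to the present anticyclotomic setting; I would therefore organise the write-up exactly as \cite[Proposition~8.22]{kob03}, citing Lemmas~\ref{tracecompability}, \ref{compatibility} and Proposition~\ref{imageofthepairing} at the appropriate points.
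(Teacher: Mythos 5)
Your proposal is correct and matches the paper's approach, which simply cites \cite[Proposition~8.21]{kob03}: one lifts the square to the pairings $P_n^\pm$ via Lemma~\ref{compatibility}, applies the trace relations of Lemma~\ref{tracecompability}, and descends using the injectivity of multiplication by $\tomg_n^\mp$ from \eqref{signediwasawaalgebra}. The only imprecision is in the parity where $\omega_{n+1}^\pm=\omega_n^\pm$: there your auxiliary square commutes only up to the factor $\Phi_{n+1}\equiv p \pmod{\omega_n}$, which is exactly compensated on the pairing side by $\tr_{n+1/n}c_{n+1}^\pm=\tr_{n+1/n}c_{n}^\pm=[p]c_n^\pm$ (and $p$ is a non-zero-divisor in $\Lambda_{\ccO,n}$), so the parity case analysis you defer to \cite[\S 8]{kob03} does go through.
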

\begin{proof}
    The proof is the same as \cite[Proposition~8.21]{kob03}.
\end{proof}
\vspace{2mm}

Observe that 
\begin{equation*}
    \varprojlim_n \Lambda_{\ccO,n}^\pm=  \varprojlim_n \Lambda_{\ccO}\big/\left(\omega_n^\pm\right) = \Lambda_\ccO.
\end{equation*}
This allows us to give the following definition.

\begin{definition}
    We define the plus and minus Coleman maps
    \begin{equation*}
        \col^\pm :  H^1_{\iw}(k_\infty,T) \longrightarrow \Lambda_\ccO
    \end{equation*}
    as the inverse limits of $\col_n^\pm: H^1(k_n,T)\longrightarrow \Lambda_{\ccO,n}^\pm$.
\end{definition}

\subsection{Image of the signed Coleman maps}

\vspace{2mm}
\begin{lemma}\label{corestrictionsurjective}
    The corestriction map $H^1(k_m,T)\longrightarrow H^1(k_n,T)$ is surjective for all $m\geq n$.
\end{lemma}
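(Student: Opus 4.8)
The plan is to deduce this from Tate local duality together with the vanishing of the local $p$-power torsion of $E$. By Tate local duality over each finite extension $k_m/\Qp$, the cup product and the Weil pairing on $T$ identify $H^1(k_m,T)$ with the Pontryagin dual of $H^1(k_m,E[p^\infty])$, and under these identifications the corestriction map $\cor\colon H^1(k_m,T)\to H^1(k_n,T)$ becomes the dual of the restriction map $\res\colon H^1(k_n,E[p^\infty])\to H^1(k_m,E[p^\infty])$. Hence $\cor$ is surjective if and only if $\res$ is injective, and it suffices to prove the latter for all $m\ge n$.

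By the inflation--restriction exact sequence applied to $k_m/k_n$, the kernel of $\res$ is $H^1\!\left(\gal(k_m/k_n),\,E(k_m)[p^\infty]\right)$, so it is enough to show that $E(k_m)[p^\infty]=0$ for all $m\ge 0$. Since $k_m/k$ is totally ramified, the residue field of $k_m$ is $\bbF_{p^2}$, and the reduction exact sequence $0\to\hatE(\fram_m)\to E(k_m)\to\widetilde{E}(\bbF_{p^2})\to 0$ together with the fact that a supersingular $\widetilde{E}$ has no $p$-torsion over $\overline{\bbF}_p$ gives $E(k_m)[p^\infty]=\hatE(\fram_m)[p^\infty]$. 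Finally, because $\hatE$ has height $2$ (equivalently, $E$ has supersingular reduction and $a_p(E)=0$, so that the reduction of $[p]_{\hatE}$ modulo the maximal ideal has no $X^p$-term), the Newton polygon of $[p]_{\hatE}(X)$ consists of the single segment joining $(1,1)$ to $(p^2,0)$, normalising $v(p)=1$; hence every nonzero $p$-torsion point of $\hatE$ over $\overline{\bbQ}_p$ has valuation $\tfrac{1}{p^2-1}$. Since $e(k_m/\Qp)=p^m$, the value group of $k_m$ is $\tfrac{1}{p^m}\ZZ$, and $(p^2-1)\nmid p^m$ because $\gcd(p,p^2-1)=1$; so no such point lies in $\fram_m$. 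Therefore $\hatE(\fram_m)[p^\infty]=0$, and the claim follows.

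The argument is essentially formal once the duality is set up; the only computation requiring care is the final one, showing that the $p$-torsion of the formal group never enters the tower $\{k_m\}_m$. This is precisely where the supersingularity hypothesis (which forces the Newton polygon of $[p]_{\hatE}$ to have a single slope) and the total ramification of $k_m/k$ (which pins down the value group of $k_m$) are used. One could alternatively invoke $E(k_\infty)[p^\infty]=0$, proved along the lines of \cite[Lemma~2.1]{ip06}, but the formal-group computation above is self-contained and is all that is needed.
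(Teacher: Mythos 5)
Your proof is correct and follows essentially the same route that the paper takes by deferring to \cite[Lemma 2.1]{LL22}: local Tate duality identifies the cokernel of corestriction on $H^1(k_m,T)$ with the Pontryagin dual of the kernel of restriction on $H^1(-,E[p^\infty])$, and that kernel is $H^1(\gal(k_m/k_n),E(k_m)[p^\infty])=0$ once one knows $E(k_m)[p^\infty]=0$. The only difference is that you verify the vanishing of $E(k_m)[p^\infty]$ by a self-contained Newton-polygon computation for the height-two formal group over the unramified quadratic field $k$ together with the total ramification of $k_m/k$, where the paper would appeal to the arguments of \cite{LL22} (compare also \cite[Lemma 2.1]{ip06}); this computation is accurate as written.
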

\begin{proof}
   The proof is similar to \cite[Lemma 2.1]{LL22}.
  \end{proof}

\vspace{2mm}

Let $I_{\ccO,n}$ be the augmentation ideal of $\Lambda_{\ccO,n}=\ccO[G_n]$:
\begin{equation*}
    I_{\ccO,n}=\ker\left( \ccO[G_n]\longrightarrow \ccO\right).
\end{equation*}
We denote $I_\ccO:=\varprojlim_n I_{\ccO,n}=X\Lambda_\ccO$ and $I_{\ccO,n}^\pm=I_{\ccO,n}\cap \Lambda^\pm_{\ccO,n}=X\Lambda_{\ccO,n}^\pm$, where the last equality follows from \eqref{signediwasawaalgebra}.

\vspace{2mm}

\begin{prop}\label{surjectivityofcolemanmaps}
     The image of $\col^-$ (resp. $\col^-_n$) is equal to $\Lambda_\ccO$ (resp. $\Lambda_{\ccO,n}^-$), whereas the image of $\col^+$ (resp. $\col_n^+$) is given by $I_\ccO$ (resp. $I_{\ccO,n}^+$).
\end{prop}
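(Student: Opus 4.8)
The plan is to reduce the statement to finite level, reformulate it in terms of the pairings $P_n^\pm$, and then extract surjectivity from local Tate duality together with Theorem~\ref{generatingtheorem}; the argument runs parallel to \cite[\S8]{kob03}, the new input being the cyclicity of $\hatE^\pm(\fram_n)$ over $\Lambda_{\ccO,n}$.

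\emph{Reduction to finite level.} Since $\col^\pm=\varprojlim_n\col_n^\pm$, since $\varprojlim_n\Lambda_{\ccO,n}^\pm=\Lambda_\ccO$ (as recorded above), and since the transition maps $\Lambda_{\ccO,n+1}^\pm\to\Lambda_{\ccO,n}^\pm$ are surjective, the image of $\col^\pm$ is the inverse limit of the images of the $\col_n^\pm$; the relevant Mittag--Leffler condition holds because every module in sight is finitely generated over the Noetherian compact ring $\Lambda_\ccO$, and Lemma~\ref{corestrictionsurjective} ensures that $H^1_{\iw}(k_\infty,T)$ surjects onto each $H^1(k_n,T)$ so that no image is lost in the limit. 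Taking the inverse limit of the augmentation sequences $0\to I_{\ccO,n}^+\to\Lambda_{\ccO,n}^+\to\ccO\to0$ gives $\varprojlim_n I_{\ccO,n}^+=I_\ccO$. Hence it suffices to prove $\im\col_n^-=\Lambda_{\ccO,n}^-$ and $\im\col_n^+=I_{\ccO,n}^+$ for every $n\ge0$. By the defining square of $\col_n^\pm$ and the isomorphism $\times\tomg_n^\mp\colon\Lambda_{\ccO,n}^\pm\xrightarrow{\sim}\tomg_n^\mp\Lambda_{\ccO,n}$ of~\eqref{signediwasawaalgebra}, and using $\omega_n^-=X\tomg_n^-$, these assertions are equivalent respectively to $\im P_n^-=\tomg_n^+\Lambda_{\ccO,n}$ and $\im P_n^+=\omega_n^-\Lambda_{\ccO,n}$, for which Proposition~\ref{imageofthepairing} already supplies the inclusions ``$\subseteq$''; the content is the reverse inclusions.

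\emph{Duality and rank count.} By Proposition~\ref{kernelofthepairing}, $P_n^\pm$ factors through an injection $\overline{P}_n^\pm\colon H^1(k_n,T)/H^1_\pm(k_n,T)\hookrightarrow\Lambda_{\ccO,n}$. Since $H^1_\pm(k_n,T)$ is, by definition, the exact orthogonal complement under the perfect local Tate pairing of the image of $\hatE^\pm(\fram_n)\otimes\Qp/\Zp$ (which is faithful by Lemma~\ref{plusminusinjective}), the source $H^1(k_n,T)/H^1_\pm(k_n,T)$ is isomorphic, as an $\ccO[G_n]$-module, to the Pontryagin dual of $\hatE^\pm(\fram_n)\otimes\Qp/\Zp$, equivalently (as $\hatE^\pm(\fram_n)$ is $\ccO$-free) to the $\ccO$-linear dual of $\hatE^\pm(\fram_n)$; in particular it is $\ccO$-free. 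By Theorem~\ref{generatingtheorem}, $\hatE^\pm(\fram_n)$ is cyclic over $\Lambda_{\ccO,n}$, generated by $c_n^\pm$; identifying the set of characters on which it is supported (namely $\Xi_n^\pm$), and using that by the normal basis theorem $\hatE(\fram_n)\otimes\Qp\cong k_n$ is free of rank $2$ over $\Qp[G_n]$, one finds $\hatE^-(\fram_n)\cong\Lambda_{\ccO,n}/(\omega_n^-)$ and $\hatE^+(\fram_n)\cong\Lambda_{\ccO,n}/(\tomg_n^+)$ as $\Lambda_{\ccO,n}$-modules. Since $\omega_n=\tomg_n^+\omega_n^-$, one also has $\tomg_n^+\Lambda_{\ccO,n}\cong\Lambda_{\ccO,n}/(\omega_n^-)$ and $\omega_n^-\Lambda_{\ccO,n}\cong\Lambda_{\ccO,n}/(\tomg_n^+)$. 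Therefore $\overline{P}_n^\pm$ is an injection between $\ccO$-free modules of the same finite $\ccO$-rank, so its image has finite index in the target ideal.

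\emph{From finite index to equality, and the main obstacle.} The delicate point is upgrading this to an equality: an injective $\Lambda_{\ccO,n}$-endomorphism of a cyclic module such as $\Lambda_{\ccO,n}/(\omega_n^-)$ need not be surjective (multiplication by $X$ is a counterexample), so the rank count alone does not suffice. Following the method of \cite[\S8]{kob03}, one evaluates $P_n^\pm(z)$ at each $\chi\in\Xi_n^\pm$: one has $P_n^\pm(z)(\chi)=\langle e_\chi(c_n^\pm),z\rangle_n$, where $e_\chi$ is the $\chi$-projector; here $e_\chi(c_n^\pm)\neq0$ because $c_n^\pm$ generates $\hatE^\pm(\fram_n)$, and the non-degeneracy of the local Tate pairing on $\chi$-isotypic components shows that $z\mapsto P_n^\pm(z)(\chi)$ is surjective onto $\ccO[\chi]$. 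Combining this over all $\chi\in\Xi_n^\pm$ with the $\Lambda_{\ccO,n}$-module structure of $\im P_n^\pm$ established above — equivalently, checking surjectivity of $\overline{P}_n^\pm$ modulo the maximal ideal of $\Lambda_{\ccO,n}$ and invoking Nakayama's lemma — shows that $\im P_n^\pm$ is the full ideal, which completes the proof. I expect this last step, ruling out a proper finite-index image, to be the main obstacle: it is precisely here that the precise cyclic structure of $\hatE^\pm(\fram_n)$ from Theorem~\ref{generatingtheorem}, hence ultimately the trace-compatibility of the points $c_n^\pm$ in Lemma~\ref{tracecompability}, must be combined with the perfectness of the local Tate pairing.
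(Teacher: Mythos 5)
Your reduction to finite level, the identification (via perfect local Tate duality) of $H^1(k_n,T)/H^1_\pm(k_n,T)$ with the full $\ccO$-linear dual of $\hatE^\pm(\fram_n)$, and the structural facts $\hatE^-(\fram_n)\simeq\Lambda_\ccO/(\omega_n^-)$, $\hatE^+(\fram_n)\simeq\Lambda_\ccO/(\tomg_n^+)$ are all sound. The genuine gap is exactly in the step you flag as the main obstacle: the claim that $z\mapsto P_n^\pm(z)(\chi)$ is surjective onto $\ccO[\chi]$ is false. By Proposition~\ref{imageofthepairing}, every value $P_n^+(z)$ lies in $\omega_n^-\Lambda_{\ccO,n}$, so its evaluation at a character of order $p^m$ lies in the proper ideal $\omega_n^-(\epsilon_m)\,\ccO[\zeta_{p^m}]$ (it is divisible by $\epsilon_m$ since $X\mid\omega_n^-$); likewise $P_n^-(z)(\mathbf{1})$ is divisible by $\tomg_n^+(0)$, a positive power of $p$ as soon as $n\ge 2$. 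What your Nakayama step actually requires is that the image at some $\chi$ be the \emph{full} ideal $\omega_n^-(\epsilon_m)\,\ccO[\zeta_{p^m}]$ (equivalently, that one can find $z$ with $\col_n^+(z)=Xg$ and $g(\epsilon_m)$ a unit), and ``non-degeneracy of the Tate pairing on $\chi$-isotypic components'' is only a statement after inverting $p$; a rational non-degeneracy assertion can never exclude a proper finite-index image, which is precisely what must be excluded. So the crucial reverse inclusion is asserted rather than proved, and the rank count plus character evaluation cannot deliver it.

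For comparison, the paper proceeds differently: Lemma~\ref{corestrictionsurjective} and Nakayama's lemma over $\Lambda_\ccO$ reduce the whole proposition to the two base cases $\im\left(\col_0^-\right)=\ccO$ and $\im\left(\col_2^+\right)=I_{\ccO,2}^+$, and the latter is proved by writing $\col_2^+$ as the composite $H^1(k_2,T)\rightarrow \Hom\left(\hatE(\fram_2),\ccO[G_2]\right)\rightarrow\Hom\left(\hatE^+(\fram_2),\ccO[G_2]\right)\simeq I_{\ccO,2}^+$. Here the first arrow is surjective because its Pontryagin dual is the Kummer injection $\hatE(\fram_2)\otimes\bbQ_p/\bbZ_p\hookrightarrow H^1(k_2,E[p^\infty])$ --- this is the \emph{integral} duality input missing from your argument --- the second arrow is surjective because $\hatE^+(\fram_2)$ is a $\Lambda_{\ccO,2}$-direct summand of $\hatE(\fram_2)$ by Theorem~\ref{generatingtheorem}, and the final identification comes from the explicit $\ccO$-basis $\left\{(c_2^+)^{\sigma^i}\right\}$ of $\hatE^+(\fram_2)$. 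Your duality identification of the quotient with $\Hom_\ccO\left(\hatE^\pm(\fram_n),\ccO\right)$ is precisely the ingredient that repairs the proof: use it to compute exactly the image of the packaged map $\phi\mapsto\sum_{\sigma}\phi\left((c_n^\pm)^\sigma\right)\sigma$, which is a purely algebraic computation with the cyclic presentation of $\hatE^\pm(\fram_n)$ and the splitting of Theorem~\ref{generatingtheorem}, instead of retreating to a rank count followed by the character-evaluation argument.
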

\begin{proof}
      By Lemma \ref{corestrictionsurjective} and Nakayama’s lemma, it is enough to show that $\image\left(\col^-_0\right)=\Lambda_{\ccO,0}^-=
      \ccO$ and $\image\left(\col^+_2\right)=I_{\ccO,2}^+$. Note that $\hatE(\fram_0)$ is generated by $c_0^-$ as an $\ccO$-module, so the proof of \cite[Proposition 8.23]{kob03} can be extended to show that $\im\left(\col^-_0\right)=\ccO$. 

We have the following $\ccO$-isomorphism $$I_{\ccO,2}^+=X\Lambda_{\ccO,2}^+=\frac{X\Lambda_\ccO}{(X\Phi_2)}\simeq\ccO[\zeta_{p^2}],$$
where $\zeta_{p^2}$ is a primitive $p^2$-th root of unity, and $Xf(X)$ is sent to $f(\zeta_{p^2}-1)$ under this isomorphism.
       Since $\left\{\zeta_{p^2}^i:0\le i\le \phi(p^2)-1\right\}$ is an $\ccO$-basis of $\ccO[\zeta_{p^2}]$, we see that 
            $\left\{X(1+X)^i:0\le i\le\phi(p^2)-1\right\}$ is an $\ccO$-basis of $I_{\ccO,2}^+$.

      Let $\sigma$ be the image of our chosen topological generator $\gamma=1+X$ of $\Gamma$ in $G_2$. In particular, it is a generator of the cyclic group $G_2$, and $\gal(k_2/k_1)=\langle\sigma^p\rangle$. Since $c_2^+\in\hatE^+(\fram_2)$, for all integer $j$, we have
      $$ \sum_{i=0}^{p-1}\left(c_2^+\right)^{\sigma^{j+ip}}=\tr_{2/1}\left(c_2^+\right)^{\sigma^j}=0.$$
Furthermore, Theorem~\ref{generatingtheorem} tells us that
\begin{equation}
\label{eq:decomp}
\hatE(\fram_2)=\hatE^+(\fram_2)\oplus\hatE^-(\fram_2)=\hatE^+(\fram_2)\oplus\hatE(\fram_1),    
\end{equation}
and $\hatE^+(\fram_2)=\ccO[G_2]\cdot c_2^+$.
      Therefore,  $\left\{\left(c_2^+\right)^{\sigma^i}:0\le i\le\phi(p^2)-1\right\}$ is an $\ccO$-basis of $\hatE^+(\fram_2)$. This gives the following $\ccO[G_2]$-isomorphism
      \begin{equation}
      \hatE^+(\fram_2)\simeq I_{\ccO,2}^+,
      \label{eq:augment}
      \end{equation}
      where $c_2^+$ is sent to $X$.
          Consequently, the morphism $\col_2^+$ can be described as 
      \begin{equation*}
          H^1(k_2,T)\longrightarrow \Hom\left(\hatE(\fram_2),\ccO[G_2]\right) \longrightarrow \Hom\left(\hatE^+(\fram_2),\ccO[G_2]\right) \simeq I_{\ccO,2}^+,
      \end{equation*}
      where the first map is defined as
      \[
      z\mapsto\left(Q\mapsto P_{2,Q}(z)\right)
      \]
      (see Definition~\ref{def:pairing}), the second map is the projection by the decomposition \eqref{eq:decomp}, and the last isomorphism is induced from \eqref{eq:augment}. The first arrow is surjective since its dual is the injection $\hatE(\fram_2)\otimes\bbQ_p/\bbZ_p \longrightarrow H^1(k_2,E[p^\infty])$. It is clear from the definition that the second arrow is surjective. Therefore, the image of $\col_2^+$ is $I_{\ccO,2}^+$, as desired.
\end{proof}

\subsection{Signed Selmer groups}\label{signedselmergroups}  Recall that $K_\infty$ is the anticyclotomic $\bbZ_p$-extension of $K$, and that $K_n \subset K_\infty$ denotes the unique subextension such that $[K_n : K] = p^n$, for an integer $n \geq 0$.

For a rational prime $\ell$, let
\begin{equation*}
    K_{n,\ell}:=K_n\otimes_\bbQ \bbQ_\ell,\;\; H^1(K_{n,\ell}, E[p^\infty]):=\bigoplus_{\lambda\mid \ell} H^1(K_{n,\lambda}, E[p^\infty]),
\end{equation*}
where the direct sum runs over all primes of $K_n$ above $\ell$. We have the natural restriction map
\begin{equation*}
    \res_\ell: H^1(K_{n}, E[p^\infty]) \longrightarrow H^1(K_{n,\ell}, E[p^\infty]).
\end{equation*}

Let $H^1_{\bff}(K_{n,\ell}, E[p^\infty])\subset H^1(K_{n,\ell}, E[p^\infty])$ for the Bloch--Kato subgroup. The singular quotient is given by
\begin{equation*}
    H^1_{/\bff}(K_{n,\ell}, X):=\dfrac{H^1(K_{n,\ell}, E[p^\infty])}{H^1_{\bff}(K_{n,\ell}, E[p^\infty])}.
\end{equation*}

\vspace{2mm}

\begin{definition}\label{def:pmSel}
   For $\bullet\in\{+,-\}$, we define the signed Selmer group of $E$ over $K_n$ by
    \begin{equation*}
        \Sel^\bullet(E/K_n):=\ker\left(H^1(K_n,E[p^\infty])\longrightarrow \prod_{\ell\nmid p} H^1_{/\bff}(K_{m,\ell}, E[p^\infty])\times\dfrac{H^1(k_{n}, E[p^\infty])}{\hatE^\bullet(\fram_n)\otimes\bbQ_p/\bbZ_p} \right).
    \end{equation*}
    Further, define $\Sel^\bullet(E/K_\infty)=\varinjlim_n \Sel^\bullet(E/K_n)$.
\end{definition}

\vspace{2mm}

If $\Sel_{p^\infty}(E/K_\infty)$ denotes the classical $p^\infty$-Selmer group, then $\Sel^\bullet(E/K_\infty)\subset \Sel_{p^\infty}(E/K_\infty)$. It follows from \cite[Theorem 4.5]{man71} that $\Sel_{p^\infty}(E/K_\infty)$ is cofinitely generated over $\Lambda$. Hence, so is $\Sel^\bullet(E/K_\infty)$. We consider the following hypothesis.

\vspace{2mm}

\begin{conj}\label{cotorsionconjecture}
    For $\bullet\in\{+,-\}$, the signed Selmer groups $\Sel^\bullet(E/K_\infty)$ is cotorsion over $\Lambda$.
\end{conj}

\begin{remark}\label{cotorsionresult}
   Let $N$ denote the conductor of $E$, and $p\geq 5$. Assume that $(D_K,pN)=1$, where $D_K$ is the discriminant of $K$. Write $N=N^+N^-$, where $N^+$ (resp. $N^-$) is divisible only by primes that are split (resp. inert) in $K$. In \cite{BLV,bbl24}, it has been proved that Hypothesis \ref{cotorsionconjecture} is valid under the following hypotheses:
   \begin{enumerate}[label=(\roman*)]

       \item $N^-$ is a square-free product of odd number of primes.
       \item If $p=5$, the residual representation $\overline{\rho}_E\left( G_ {\bbQ(\mu_{p^\infty})}\right)$ contains a conjugate of $\SL_2(\bbF_p)$. If $p>5$, the $G_\bbQ$-representation $\overline{\rho}_E$ is irreducible.
       \item $\overline{\rho}_E$ is ramified at the primes $\ell$ that satisfy one of the following conditions:
       \begin{itemize}
           \item $\ell \mid N^-$ with $\ell^2 \equiv 1 \mod p$,
           \item $\ell \mid N^+$.
       \end{itemize}
   \end{enumerate}
\end{remark}

For the remainder of the article, we assume that Hypothesis \ref{cotorsionconjecture} holds.

\begin{definition}
  For $\bullet\in\{+,-\}$, we write $\mu_\bullet$ and $\lambda_\bullet$ for the $\mu$- and $\lambda$-invariants of the torsion $\Lambda$-module $\Sel^\bullet(E/K_\infty)^\vee$.
  \end{definition}

\vspace{2mm}
Let $\Sigma$ denote a fixed finite set of primes of $K$ containing $p$, the ramified primes of $K/\bbQ$, the archimedean place, and all the bad reduction primes of $E$. Write $K_\Sigma$ for the maximal algebraic extension of $K$ which is unramified outside $\Sigma$. For any (possibly infinite) extension $K \subseteq L \subseteq K_\Sigma$, write $G_\Sigma(L) = \gal(K_\Sigma/L)$. 
For $i\in\{1,2\}$, we define $H^i_{\iw,\Sigma}(K_\infty,T)=\displaystyle\varprojlim_n H^i(G_\Sigma(K_n),T)$, where the transition maps are given by the corestriction maps. Note that $H^i_{\iw,\Sigma}(K_\infty,T)$ is independent of the choice of $\Sigma$ (see \cite[Corollary B.3.5]{Ru00}, \cite[Lemma 5.3.1]{MR04} and \cite[Proposition 7.1]{kob03}). Since the set $\Sigma$ is fixed, we will drop the subscript $\Sigma$ from the notation for simplicity and simply write $H^i_{\iw}(K_\infty,T)$.

 \vspace{2mm}

We conclude this section with the following statement on the structure of $H^i_\iw(K_\infty,T)$.

\vspace{2mm}

\begin{prop}\label{consequenceofcotorsionness}
    If Hypothesis~\ref{cotorsionconjecture} holds,  the following statements are valid:
    \begin{enumerate}[label=(\roman*)]
        \item $H^1_\iw(K_\infty,T)$ is a free $\Lambda$-module of rank $2$.
        \item $H^2_\iw(K_\infty,T)$ is a torsion $\Lambda$-module.
    \end{enumerate}
\end{prop}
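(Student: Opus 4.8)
The plan is to deduce both parts from Jannsen's spectral sequence together with the Poitou--Tate exact sequence, following \cite[Lemma~2.6 and Proposition~2.8]{LL22} (compare \cite[\S7]{kob03}). Two inputs will be used throughout. First, $E(K_\infty)[p^\infty]=0$: as $K_\infty/K$ is a $\bbZ_p$-extension and $p$ is inert in $K$, this follows as in \cite[Lemma~2.1]{ip06}; in particular $H^0(G_\Sigma(K_\infty),E[p^\infty])=0$ and hence $H^0_\iw(K_\infty,T)=0$. Second, the global Euler characteristic identity of Perrin-Riou,
\[
\corank_\Lambda H^1(G_\Sigma(K_\infty),E[p^\infty])-\corank_\Lambda H^2(G_\Sigma(K_\infty),E[p^\infty])=[K:\bbQ]=2.
\]

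For (i), I would first establish freeness. Jannsen's spectral sequence $\ext^i_\Lambda\!\left(H^j(G_\Sigma(K_\infty),E[p^\infty])^\vee,\Lambda\right)\Rightarrow H^{i+j}_\iw(K_\infty,T)$ has a low-degree exact sequence whose two outer terms both involve $H^0(G_\Sigma(K_\infty),E[p^\infty])$ and therefore vanish; this yields a natural isomorphism
\[
H^1_\iw(K_\infty,T)\;\cong\;\Hom_\Lambda\!\left(H^1(G_\Sigma(K_\infty),E[p^\infty])^\vee,\Lambda\right),
\]
whose target is reflexive. Since $\Lambda\cong\bbZ_p[[X]]$ is regular local of Krull dimension $2$, every finitely generated reflexive $\Lambda$-module satisfies Serre's condition $S_2$, hence is maximal Cohen--Macaulay, hence free by Auslander--Buchsbaum; so $H^1_\iw(K_\infty,T)$ is free over $\Lambda$ and, in particular, torsion-free. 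To pin down its rank it suffices, by the Euler characteristic identity, to show $H^2(G_\Sigma(K_\infty),E[p^\infty])=0$, and I would obtain this from the Poitou--Tate exact sequence (see \cite[Proposition~2.8]{LL22})
\[
0\to\Sel^\bullet(E/K_\infty)\to H^1(G_\Sigma(K_\infty),E[p^\infty])\to\bigoplus_{v\in\Sigma}J_v\to\frakS^\bullet(E/K_\infty)^\vee\to H^2(G_\Sigma(K_\infty),E[p^\infty])\to0,
\]
where $J_v$ is the corresponding local summand (as in Definition~\ref{def:pmSel}), $\frakS^\bullet(E/K_\infty)\subseteq H^1_\iw(K_\infty,T)$, and $\corank_\Lambda\bigoplus_{v\in\Sigma}J_v=2$ by Perrin-Riou's local corank computation, the only contribution coming from the unique prime above $p$ and being governed by the signed local condition. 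Under Hypothesis~\ref{cotorsionconjecture}, $\Sel^\bullet(E/K_\infty)$ is $\Lambda$-cotorsion; comparing $\Lambda$-coranks along the sequence with the Euler characteristic identity forces $\frakS^\bullet(E/K_\infty)$ to be $\Lambda$-torsion, hence $0$ since it lies in the torsion-free module $H^1_\iw(K_\infty,T)$. The sequence then collapses and gives $H^2(G_\Sigma(K_\infty),E[p^\infty])=0$, so $\corank_\Lambda H^1(G_\Sigma(K_\infty),E[p^\infty])=2$ and $\rank_\Lambda H^1_\iw(K_\infty,T)=2$.

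For (ii), with $H^0(G_\Sigma(K_\infty),E[p^\infty])=H^2(G_\Sigma(K_\infty),E[p^\infty])=0$ now known, the degree-$2$ part of Jannsen's spectral sequence degenerates to
\[
H^2_\iw(K_\infty,T)\;\cong\;\ext^1_\Lambda\!\left(H^1(G_\Sigma(K_\infty),E[p^\infty])^\vee,\Lambda\right),
\]
and for any finitely generated $\Lambda$-module $N$ the module $\ext^1_\Lambda(N,\Lambda)$ is $\Lambda$-torsion (it vanishes after inverting the nonzero elements of $\Lambda$), so $H^2_\iw(K_\infty,T)$ is torsion.

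The step requiring real care is the corank bookkeeping above: establishing $\corank_\Lambda\bigoplus_{v\in\Sigma}J_v=[K:\bbQ]=2$, which at the prime above $p$ rests on the ``Lagrangian'' fact that the signed local conditions cut out $\Lambda$-submodules of corank equal to half that of the local $H^1$, together with the identification of $\frakS^\bullet(E/K_\infty)$ as a submodule of $H^1_\iw(K_\infty,T)$. Both are supplied by \cite[\S8--9]{kob03} and \cite[\S2]{LL22}, so the proof ultimately amounts to assembling these inputs with Hypothesis~\ref{cotorsionconjecture}.
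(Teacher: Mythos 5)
Your proposal is correct and follows essentially the same route as the paper, which simply defers to \cite[Proposition 2.9]{LL22}: the argument you reconstruct is precisely that one, using Jannsen's spectral sequence together with $E(K_\infty)[p^\infty]=0$ to identify $H^1_\iw(K_\infty,T)$ with the reflexive (hence $\Lambda$-free) module $\Hom_\Lambda\left(H^1(G_\Sigma(K_\infty),E[p^\infty])^\vee,\Lambda\right)$, and the Poitou--Tate sequence plus Perrin-Riou's corank formulas, combined with Hypothesis~\ref{cotorsionconjecture}, to force the compact Selmer group inside the torsion-free module $H^1_\iw(K_\infty,T)$ to vanish, yielding $H^2(G_\Sigma(K_\infty),E[p^\infty])=0$, the rank-two statement, and the torsionness of $H^2_\iw(K_\infty,T)\simeq \ext^1_\Lambda\left(H^1(G_\Sigma(K_\infty),E[p^\infty])^\vee,\Lambda\right)$. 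Your corank bookkeeping (corank $4$ for the local $H^1$ at the inert prime, corank $2$ for the signed condition) and the freeness step (reflexive implies free over the two-dimensional regular local ring $\Lambda$) are both sound, so no gap remains.
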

\begin{proof}
  See \cite[Proposition 2.9]{LL22}.
\end{proof}

\section{Kobayashi Ranks}\label{kobayashiranks}
\subsection{Definition and basic properties}
Following \cite[\S 10]{kob03}, we define the Kobayashi ranks as follows.

\begin{definition}
    Let $(M_n)_{n\geq 1}$ be a projective system of finitely generated $\bbZ_p$-modules. Given an integer $n\ge1$, if $\pi_n:M_n\longrightarrow M_{n-1}$ has finite kernel and cokernel, we  define
\begin{equation*}
    \nabla M_n:=\length_{\bbZ_p}(\ker\pi_n)-\length_{\bbZ_p}(\coker\; \pi_n)+\dim_{\bbQ_p}M_{n-1}\otimes \bbQ_p.
\end{equation*}
\end{definition}



\begin{lemma}\label{kobayashirankzero}Let $(M'_n)_{n\ge1}$, $(M_n)_{n\ge1}$ and $ (M''_n)_{n\ge1}$ be projective systems of finitely generated $\Zp$-modules.
\begin{enumerate}[label=(\roman*)]
    \item Suppose that for all $n\ge1$, there is an exact sequence
    \begin{equation*}
        0\longrightarrow (M'_n)\longrightarrow (M_n)\longrightarrow (M''_n)\longrightarrow 0.
    \end{equation*}
    If two of $\nabla M_n, \nabla M'_n,\nabla M''_n$ are defined, then the other is also defined, in which case
    \begin{equation*}
        \nabla M_n=\nabla M'_n+\nabla M''_n.
    \end{equation*}
    \item  Suppose that $M_n$ are constant. If $M_n$ is finite or the transition map $M_n\longrightarrow M_{n-1}$ is given by the multiplication map by $p$, then $\nabla M_n=0$.
\end{enumerate}    
\end{lemma}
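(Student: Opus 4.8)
The plan is to prove the two assertions of Lemma~\ref{kobayashirankzero} in order, using only the additivity of lengths and corank in short exact sequences together with the snake lemma.

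\textbf{Part (i).} Suppose two of the three Kobayashi ranks are defined; say $\nabla M'_n$ and $\nabla M''_n$ are defined (the other cases are symmetric). I would apply the snake lemma to the commutative diagram with exact rows
\begin{equation*}
\begin{tikzcd}
0 \arrow[r] & M'_n \arrow[r] \arrow[d, "\pi'_n"] & M_n \arrow[r] \arrow[d, "\pi_n"] & M''_n \arrow[r] \arrow[d, "\pi''_n"] & 0\\
0 \arrow[r] & M'_{n-1} \arrow[r] & M_{n-1} \arrow[r] & M''_{n-1} \arrow[r] & 0,
\end{tikzcd}
\end{equation*}
which yields the exact sequence
\begin{equation*}
0\to \ker\pi'_n\to\ker\pi_n\to\ker\pi''_n\to\coker\pi'_n\to\coker\pi_n\to\coker\pi''_n\to 0.
\end{equation*}
Since the outer four terms are finite by hypothesis, $\ker\pi_n$ and $\coker\pi_n$ are finite as well, so $\nabla M_n$ is defined. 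Taking the alternating sum of $\bbZ_p$-lengths over this six-term exact sequence (lengths are additive on exact sequences of finite modules) gives
\begin{equation*}
\length(\ker\pi_n)-\length(\coker\pi_n)=\bigl(\length(\ker\pi'_n)-\length(\coker\pi'_n)\bigr)+\bigl(\length(\ker\pi''_n)-\length(\coker\pi''_n)\bigr).
\end{equation*}
Adding to this the identity $\dim_{\bbQ_p}(M_{n-1}\otimes\bbQ_p)=\dim_{\bbQ_p}(M'_{n-1}\otimes\bbQ_p)+\dim_{\bbQ_p}(M''_{n-1}\otimes\bbQ_p)$, which follows from exactness of $0\to M'_{n-1}\to M_{n-1}\to M''_{n-1}\to 0$ after tensoring with the flat $\bbZ_p$-module $\bbQ_p$, yields $\nabla M_n=\nabla M'_n+\nabla M''_n$, as claimed.

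\textbf{Part (ii).} Here $M_n=M$ is constant. If $M$ is finite, the transition map is $\pi_n=\mathrm{id}_M$ (or at least has finite kernel and cokernel with equal lengths — but in the constant case with identity transition maps it is simply the identity), so $\ker\pi_n=\coker\pi_n=0$ and $M\otimes\bbQ_p=0$, giving $\nabla M_n=0$. If instead $\pi_n$ is multiplication by $p$ on a finitely generated $\bbZ_p$-module $M$, write $M\cong\bbZ_p^{\oplus r}\oplus M_{\tor}$ with $M_{\tor}$ finite. Multiplication by $p$ on $\bbZ_p^{\oplus r}$ has trivial kernel and cokernel of length $r$; multiplication by $p$ on the finite module $M_{\tor}$ has $\length(\ker)=\length(\coker)$ since the domain and codomain have the same (finite) length. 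Hence $\length(\ker\pi_n)-\length(\coker\pi_n)=-r$, while $\dim_{\bbQ_p}(M\otimes\bbQ_p)=r$, so $\nabla M_n=-r+r=0$.

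I do not anticipate a genuine obstacle here; the only point requiring a little care is checking that in Part (i) all the relevant finiteness hypotheses propagate correctly through the six-term snake sequence in each of the three cases for which ``two of the three are defined'', and that the map $\pi_n$ indeed has finite kernel and cokernel in the remaining case so that $\nabla M_n$ makes sense before one can even state the identity. This is a purely formal check using that a subobject and quotient of a finite module are finite, and that in a short exact sequence of finitely generated $\bbZ_p$-modules the middle term is finite iff the two outer ones are.
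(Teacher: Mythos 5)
Your proof is correct and is essentially the standard argument behind this lemma, which the paper simply cites from Kobayashi (\cite[Lemma 10.4]{kob03}): the snake lemma plus additivity of $\bbZ_p$-lengths and of $\bbQ_p$-dimensions gives part (i), and the rank/torsion splitting gives part (ii). One small remark: in the finite constant case you do not even need the transition maps to be the identity, since any endomorphism of a finite module has kernel and cokernel of equal length --- the observation you already made parenthetically.
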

\begin{proof}
    See \cite[Lemma 10.4]{kob03}.
\end{proof}

\vspace{2mm}

\begin{lemma}\label{kobayashirankandcharacteristicpolynomial}
 Let $f\in \Lambda$ be a non-zero element of $\Lambda$. Let $(N_n)_{n\ge1}$ be a projective system given by $N_n= \Lambda\big/ (f,\omega_{n})$, where the connecting maps are natural projections.

\begin{enumerate}[label=(\roman*)]
    \item Suppose that $\Phi_n\nmid f$. Then $\nabla N_n$ is defined and is equal to $\ord_{\epsilon_n}f(\epsilon_n)$, where $\epsilon_n=\zeta_{p^n}-1$.
   
     \item Let $M$ be a finitely generated torsion $\Lambda$-module with the characteristic polynomial $f$, and let $M_n=M/\omega_nM$. Consider the natural projective system $(M_n)_{n\geq 1}$. Then, for $n\gg 0$, $\nabla M_n$ is defined and
    \begin{equation*}
        \nabla M_{n}= \ord_{\epsilon_n}f(\epsilon_n)=\lambda(M)+\phi(p^n)\mu(M),
    \end{equation*}
where $\lambda(M)$ and $\mu(M)$ are the Iwasawa invariants of $M$ and $\phi$ is the Euler totient function.
\end{enumerate}

\end{lemma}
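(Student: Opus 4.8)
The plan is to prove (i) by a direct analysis of the transition maps in the projective system $(N_n)_n$, and then to deduce (ii) from (i) via the structure theory of finitely generated torsion $\Lambda$-modules, in the spirit of \cite[\S10]{kob03}. For part (i): since $\omega_{n-1}\mid\omega_n$, the transition map $\pi_n\colon N_n\to N_{n-1}$ is surjective, so $\coker\pi_n=0$ and it remains to understand $\ker\pi_n=(f,\omega_{n-1})/(f,\omega_n)$. Put $\Phi_n=\omega_n/\omega_{n-1}$, $d=\gcd(f,\omega_{n-1})$ (this makes sense since $\Lambda=\bbZ_p[[X]]$ is a unique factorisation domain) and $f=df'$. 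One checks that $b\mapsto b\omega_{n-1}$ defines a surjection $\Lambda\twoheadrightarrow\ker\pi_n$ with kernel $(f',\Phi_n)$, so $\ker\pi_n\cong\Lambda/(f',\Phi_n)$. Because $\Phi_n$ is irreducible in $\Lambda$ (being a distinguished Eisenstein polynomial) and $\Phi_n\nmid f$ by hypothesis, $\gcd(f',\Phi_n)=1$; hence $\Lambda/(f',\Phi_n)$ is finite and $\nabla N_n$ is defined. Using the isomorphism $\Lambda/(\Phi_n)\xrightarrow{\ \sim\ }\bbZ_p[\zeta_{p^n}]$, $X\mapsto\epsilon_n$, we get $\ker\pi_n\cong\bbZ_p[\zeta_{p^n}]/(f'(\epsilon_n))$, so $\length_{\bbZ_p}\ker\pi_n=\ord_{\epsilon_n}f'(\epsilon_n)$. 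For the remaining term, the Chinese Remainder Theorem gives $\bbQ_p[X]/(\omega_{n-1})\cong\prod_{m=0}^{n-1}\bbQ_p(\zeta_{p^m})$, and the $m$-th factor survives in $N_{n-1}\otimes\bbQ_p$ precisely when $\Phi_m\mid f$; combined with the elementary computation $\ord_{\epsilon_n}\Phi_m(\epsilon_n)=\phi(p^m)$ for $0\le m\le n-1$ (from $\Phi_m(\epsilon_n)=\Phi_{p^m}(\zeta_{p^n})$ and the ramification in $\bbQ_p(\zeta_{p^n})/\bbQ_p$) and with $d=\prod_{0\le m\le n-1,\ \Phi_m\mid f}\Phi_m$, this yields $\dim_{\bbQ_p}(N_{n-1}\otimes\bbQ_p)=\ord_{\epsilon_n}d(\epsilon_n)$. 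Adding the two contributions and using $f=df'$ gives $\nabla N_n=\ord_{\epsilon_n}f'(\epsilon_n)+\ord_{\epsilon_n}d(\epsilon_n)=\ord_{\epsilon_n}f(\epsilon_n)$.

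For part (ii): first, quotienting by the maximal finite $\Lambda$-submodule $M_{\mathrm{fin}}$ and using that the connecting maps in the resulting $\mathrm{Tor}$-sequences vanish for $n\gg 0$ (because $\omega_n$ lies in $\mathfrak{m}^{n-n_0}(\omega_{n_0})$ while $M_{\mathrm{fin}}$ is killed by a fixed power of $\mathfrak{m}$), one obtains $0\to M_{\mathrm{fin}}\to M/\omega_n M\to (M/M_{\mathrm{fin}})/\omega_n(M/M_{\mathrm{fin}})\to 0$ for $n\gg 0$; since $(M_{\mathrm{fin}})_n$ is eventually constant, Lemma~\ref{kobayashirankzero} reduces us to the case where $M$ has no nonzero finite submodule. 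Then the structure theorem gives an embedding $M\hookrightarrow E:=\bigoplus_i\Lambda/(f_i)$ with finite cokernel $D$ and $\prod_i f_i$ equal to $f$ up to a unit; applying $-\otimes_\Lambda\Lambda/(\omega_n)$ (whose first derived functor is the $\omega_n$-torsion, $\omega_n$ being a nonzerodivisor) to $0\to M\to E\to D\to 0$ shows that $M/\omega_n M\to E/\omega_n E$ has cokernel $D$ (with transition map the identity for $n\gg 0$) and kernel a fixed quotient of $D$ (with transition map multiplication by $p$ for $n\gg 0$, since $\omega_n/\omega_{n-1}\to p$ in $\Lambda$). By Lemma~\ref{kobayashirankzero} these have vanishing Kobayashi rank, so additivity gives $\nabla(M/\omega_n M)=\nabla(E/\omega_n E)=\sum_i\nabla\big(\Lambda/(f_i,\omega_n)\big)$ for $n\gg 0$. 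Each $f_i$ has only finitely many distinguished irreducible factors, so $\Phi_n\nmid f_i$ for $n\gg 0$ and part (i) applies, giving $\nabla(M/\omega_n M)=\sum_i\ord_{\epsilon_n}f_i(\epsilon_n)=\ord_{\epsilon_n}f(\epsilon_n)$. Finally, writing $f=p^\mu g$ up to a unit with $g$ distinguished of degree $\lambda$, one has $\ord_{\epsilon_n}(p^\mu)=\mu\,\phi(p^n)$ (as $p$ has $\epsilon_n$-valuation $\phi(p^n)$) and $\ord_{\epsilon_n}g(\epsilon_n)=\lambda$ for $n\gg 0$ (the term $\epsilon_n^\lambda$ dominates once $\phi(p^n)>\lambda$), whence $\ord_{\epsilon_n}f(\epsilon_n)=\lambda+\phi(p^n)\mu=\lambda(M)+\phi(p^n)\mu(M)$.

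The main obstacle is the bookkeeping in part (ii): one must check that all the auxiliary finite modules produced by the two pseudo-isomorphisms and by the $\mathrm{Tor}$/connecting terms are eventually constant projective systems whose transition maps are the identity or multiplication by $p$, so that Lemma~\ref{kobayashirankzero} forces their Kobayashi ranks to vanish and the additivity of $\nabla$ can be applied. The corresponding technical point in part (i) is the identification $\ker\pi_n\cong\Lambda/(f',\Phi_n)$ through the gcd computation in $\Lambda$, together with the verification that this module is finite; the rest of (i) is a direct calculation.
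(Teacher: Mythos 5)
Your argument is correct and is essentially the proof the paper relies on: the paper proves this lemma simply by citing \cite[Lemma~10.5]{kob03}, and your treatment --- the identification $\ker\pi_n\simeq\Lambda/(f',\Phi_n)$ together with the Chinese remainder computation of $\dim_{\bbQ_p}N_{n-1}\otimes\bbQ_p$ for (i), and the reduction of (ii) to elementary modules via the structure theorem combined with the additivity and vanishing statements of Lemma~\ref{kobayashirankzero} --- is exactly the standard argument behind that citation. The points you leave implicit are fine as indicated: the vanishing of the connecting map uses that $M'[\omega_n]$ stabilizes (Noetherianity), so that $\omega_n/\omega_{n_1}\in\mathfrak{m}^{\,n-n_1}$ kills $M_{\mathrm{fin}}$ for $n\gg0$, and the transition maps on the $\mathrm{Tor}$ terms are multiplication by $\Phi_n$, which acts as $p$ on finite modules once $\omega_{n-1}$ annihilates them.
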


\begin{proof}
    See \cite[Lemma~10.5]{kob03}.
\end{proof}

\subsection{Kobayashi ranks of modules arising from $2\times 2$ matrices}
We derive a formula for the Kobayashi ranks of modules arising from certain types of $2\times 2$ matrices defined over $\Lambda$. We begin with the following definition.

\begin{definition}\label{def:app}
    Let $A=\begin{bmatrix}
        a&c\\b&d
    \end{bmatrix}\in \mat$.
   \begin{enumerate}[label=(\roman*)]
       \item We write $\langle A\rangle\subseteq\Ltwo$ for the $\Lambda$-module generated by the columns of $A$. Similarly, given two such matrices $A$ and $B$, we write $\langle A,B\rangle\subseteq\Ltwo$ for the $\Lambda$-module generated by the columns of $A$ and $B$.
    \item For $n\ge0$, let $\Lambda_n=\Lambda/(\omega_n)$ and write $\langle A\rangle_n$ for the $\Lambda_n$-module generated by the columns of $A$ modulo $\omega_n$ inside $\Ltwo_n$. Furthermore, we write $\An=\Ltwo_n/\langle A\rangle_n=\Ltwo/\langle \omega_n I_2,A\rangle$, where $I_2$ denotes the $2\times 2$ identity matrix. 
    \item We say that $A$ is \textbf{special} relative to an integer $n$ if it satisfies the following property:
    \[
    \Phi_m\mid \det(A),\; 0\le m\le n \; \Rightarrow \; \Phi_m \mid a,b\text{ or } \Phi_m \mid c,d.
    \]

   \end{enumerate} 
   
\end{definition}

The following lemma is a consequence of the Chinese remainder theorem.
\begin{lemma}\label{lem:CRT}
Let $n\ge1$ be an integer.
    \begin{itemize}
        \item[(i)] For all $A\in \mat$, we have $$A_{(n)}\otimes\Qp\simeq\bigoplus_{0\le m\le n}\Qp(\zeta_{p^m})^{\oplus 2}\big/\langle A(\epsilon_m)\rangle,$$ where $\langle A(\epsilon_m)\rangle$ denotes the image of the $\Qp(\zeta_{p^m})$-linear endomorphism on $\Qp(\zeta_{p^m})^{\oplus 2}$ defined by the matrix $A(\epsilon_m)$. 
        \item[(ii)] Let $\ccS$ be a finite set of non-negative integers. For each $i\in\ccS$, let $x_i\in \Qp(\zeta_{p^i})$ be any element. There exists a polynomial $F\in \Qp[X]$ such that $F(\epsilon_i)=x_i$ for all $i\in\ccS$.
    \end{itemize}    
\end{lemma}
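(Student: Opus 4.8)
The plan is to reduce both parts to the Chinese remainder theorem applied to the factorization $\omega_n=\prod_{0\le m\le n}\Phi_m$, carried out over $\Qp[X]$ rather than $\Lambda$.

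First I would record the (classical) fact that the polynomials $\Phi_m$, $0\le m\le n$, are pairwise coprime in $\Qp[X]$: for $m\ge 1$ the polynomial $\Phi_m$ is $\Phi_{p^m}(1+X)$, which is Eisenstein at $p$ (its reduction mod $p$ is $X^{\phi(p^m)}$ and its constant term is $\Phi_{p^m}(1)=p$), hence irreducible over $\Qp$; and $\Phi_0=X$ is trivially irreducible. So the $\Phi_m$ are distinct monic irreducibles, each being the minimal polynomial of $\epsilon_m=\zeta_{p^m}-1$ over $\Qp$, whence $\Qp[X]/(\Phi_m)\cong\Qp(\zeta_{p^m})$ with $X\mapsto\epsilon_m$. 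Since $\Lambda_n\otimes_{\Zp}\Qp=\Qp[X]/(\omega_n)$ and $\omega_n=\prod_{0\le m\le n}\Phi_m$, the Chinese remainder theorem yields a $\Qp$-algebra isomorphism
\[
\Lambda_n\otimes_{\Zp}\Qp\;\xrightarrow{\ \sim\ }\;\prod_{0\le m\le n}\Qp[X]/(\Phi_m)\;=\;\prod_{0\le m\le n}\Qp(\zeta_{p^m}),
\]
under which the class of $X$ maps to $(\epsilon_m)_{0\le m\le n}$.

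For (i), I would tensor the defining presentation $\An=\Lambda_n^{\oplus 2}/\langle A\rangle_n$ with the flat $\Zp$-module $\Qp$, obtaining $\An\otimes\Qp=(\Lambda_n\otimes\Qp)^{\oplus 2}/(\langle A\rangle_n\otimes\Qp)$, where the denominator is the $(\Lambda_n\otimes\Qp)$-submodule generated by the columns of $A$. Transporting along the ring isomorphism above — which, being a ring map sending $X$ to $(\epsilon_m)_m$, carries the matrix $A=A(X)$ to the tuple $(A(\epsilon_m))_m$ — the submodule $\langle A\rangle_n\otimes\Qp$ is carried onto $\bigoplus_{0\le m\le n}\langle A(\epsilon_m)\rangle$, the direct sum of the images of the endomorphisms $A(\epsilon_m)$ on $\Qp(\zeta_{p^m})^{\oplus 2}$. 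Passing to quotients gives the asserted decomposition. For (ii), I would apply the Chinese remainder theorem once more, this time to the pairwise-coprime product $\prod_{i\in\ccS}\Phi_i$, getting $\Qp[X]/\bigl(\prod_{i\in\ccS}\Phi_i\bigr)\xrightarrow{\ \sim\ }\prod_{i\in\ccS}\Qp(\zeta_{p^i})$ with $X\mapsto(\epsilon_i)_{i\in\ccS}$. Since $\Qp(\zeta_{p^i})=\Qp[\epsilon_i]$, each prescribed value $x_i$ equals $g_i(\epsilon_i)$ for some $g_i\in\Qp[X]$, so the tuple $(x_i)_{i\in\ccS}$ lies in the image of this isomorphism; choosing any $F\in\Qp[X]$ representing the corresponding residue class gives $F(\epsilon_i)=x_i$ for all $i\in\ccS$.

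Both parts are essentially formal once the factorization of $\omega_n$ and the coprimality of the $\Phi_m$ over $\Qp$ are in hand; the only point that deserves a moment's care is the verification in (i) that the CRT isomorphism really does transport the column-span $\langle A\rangle_n$ onto the direct sum of the images of the evaluated matrices $A(\epsilon_m)$, which follows because forming the submodule generated by the columns of a matrix commutes with applying a ring homomorphism coefficient-wise.
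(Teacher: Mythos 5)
Your proposal is correct and follows essentially the same route as the paper: both parts rest on the Chinese remainder theorem applied to the factorization $\omega_n=\prod_{0\le m\le n}\Phi_m$ over $\Qp[X]$, together with the evaluation isomorphisms $\Qp[X]/(\Phi_m)\simeq\Qp(\zeta_{p^m})$, $X\mapsto\epsilon_m$. The only difference is cosmetic: the paper invokes CRT at the level of the module $M=(\Lambda^{\oplus2}/\langle A\rangle)\otimes\Qp$, while you apply it at the ring level and transport the presentation, spelling out the coprimality of the $\Phi_m$ and the behaviour of the column span under base change.
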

\begin{proof}
Let $R=\Lambda\otimes\Qp$ and $M=(\Lambda^{\oplus2}/\langle A\rangle)\otimes\Qp$. Then, the Chinese remainder theorem for $R$-modules says that
\[
M/\omega_n M\simeq\bigoplus_{m=0}^n M/\Phi_m M.
\]
It is clear from the definition that $M/\omega_n M\simeq A_{(n)}\otimes\Qp$. Furthermore, the evaluation map $X\mapsto \epsilon_m$ induces the isomorphism $M/\Phi_m M\simeq \Qp(\zeta_{p^m})^{\oplus 2}\big/\langle A(\epsilon_m)\rangle$. Hence, assertion (i) follows.

Assertion (ii) follows from the isomorphism
\[
\Qp[X]\left/\left(\prod_{i\in\ccS}\omega_i\right)\right.\simeq\bigoplus_{i\in\ccS} \Qp[X]/(\Phi_i)\simeq\bigoplus_{i\in\ccS}\Qp(\zeta_{p^i}),
\]
which once again follows from the Chinese remainder theorem.
\end{proof}

We prove the following generalization of \cite[Lemma~10.5i)]{kob03}.
\begin{theorem}\label{thm:app}
Let $n\ge0$ be an integer. If $A\in\mat$ is special relative to $n$ and $\Phi_n\nmid\det(A)$, then $\nabla \An$ is defined and is equal to $\ord_{\epsilon_n}\left(\det A(\epsilon_n)\right)$. Here, the connecting map $\pi_n:\An\rightarrow A_{(n-1)}$ is the natural projection.
\end{theorem}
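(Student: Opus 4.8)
The plan is to reduce the computation of $\nabla \An$ to the rank-one case handled in \cite[Lemma~10.5]{kob03}, by diagonalizing $A$ over $\Lambda\otimes\Qp$ in a controlled way. First I would note that since we only need to compute lengths of finite $\Zp$-modules and a $\bbQ_p$-dimension, it suffices to understand $\An\otimes\Qp$ together with the torsion of $\An$ and of $A_{(n-1)}$. By Lemma~\ref{lem:CRT}(i), $\An\otimes\Qp\simeq\bigoplus_{0\le m\le n}\Qp(\zeta_{p^m})^{\oplus 2}/\langle A(\epsilon_m)\rangle$, and the $m$-th summand vanishes precisely when $\det A(\epsilon_m)\ne0$, i.e. when $\Phi_m\nmid\det(A)$. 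So the only indices $m$ contributing to $\dim_{\Qp}A_{(n-1)}\otimes\Qp$ are those with $\Phi_m\mid\det(A)$, $m\le n-1$; for each such $m$ the \textbf{special} hypothesis forces $A(\epsilon_m)$ to have a zero row-pair (either $a(\epsilon_m)=b(\epsilon_m)=0$ or $c(\epsilon_m)=d(\epsilon_m)=0$), hence $\langle A(\epsilon_m)\rangle$ is $1$-dimensional over $\Qp(\zeta_{p^m})$ (it is nonzero since otherwise all of $a,b,c,d$ would vanish at $\epsilon_m$, making $\Phi_m^2\mid\det A$, still consistent, but then one checks the dimension count directly) and the summand is one copy of $\Qp(\zeta_{p^m})$. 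This already pins down $\dim_{\Qp} A_{(n-1)}\otimes\Qp$.

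Next I would produce, for the finitely many $m\le n$ with $\Phi_m\mid\det(A)$, a single change of basis that simultaneously "splits off" the degenerate direction. Using Lemma~\ref{lem:CRT}(ii), choose $F\in\Qp[X]$ interpolating, at each bad $\epsilon_m$, a vector spanning the kernel-complement appropriately; more precisely, after possibly swapping the two coordinates (the two clauses of \textbf{special}), arrange a matrix $U\in\gl_2(\Lambda\otimes\Qp)$, congruent to a permutation matrix modulo the bad $\Phi_m$'s, so that $U^{-1}A$ becomes block upper-triangular modulo $\prod\Phi_m$ with diagonal entries $g_1,g_2\in\Lambda$ satisfying $g_1g_2\equiv \det(A)$ up to a unit, where $g_1$ carries \emph{all} the bad factors $\Phi_m$ and $g_2(\epsilon_m)\ne0$ for every bad $m$. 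The point is that over $\Lambda\otimes\Qp$ modulo $\omega_n$, $\An$ becomes an extension of $\Lambda/(g_2,\omega_n)$ by $\Lambda/(g_1,\omega_n)$ (up to the off-diagonal term and up to $\Zp$-torsion, which contributes $0$ to $\nabla$ by Lemma~\ref{kobayashirankzero}). The hypothesis $\Phi_n\nmid\det(A)$ guarantees $\Phi_n\nmid g_1$ and $\Phi_n\nmid g_2$, so Lemma~\ref{kobayashirankandcharacteristicpolynomial}(i) applies to each factor: $\nabla(\Lambda/(g_i,\omega_n))$ is defined and equals $\ord_{\epsilon_n}g_i(\epsilon_n)$. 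By additivity (Lemma~\ref{kobayashirankzero}(i)), $\nabla\An$ is defined and equals $\ord_{\epsilon_n}g_1(\epsilon_n)+\ord_{\epsilon_n}g_2(\epsilon_n)=\ord_{\epsilon_n}(g_1g_2)(\epsilon_n)=\ord_{\epsilon_n}\det A(\epsilon_n)$.

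The main obstacle I anticipate is bookkeeping at the level of \emph{integral} (as opposed to rational) structures: the interpolation argument naturally lives over $\Lambda\otimes\Qp$, so the matrices $U$, $g_1$, $g_2$ need not have $\Zp$-coefficients and the resulting short exact sequence of projective systems only exists after inverting $p$. I would handle this by invoking Lemma~\ref{kobayashirankzero}(i)--(ii): all discrepancies between the integral $\An$ and the rationally-split model are finite $\Zp$-modules whose transition maps, after passing to a cofinal subsystem, are isomorphisms or multiplication-by-$p$ maps, hence contribute $0$ to $\nabla$; and $\nabla$ only sees $\ker\pi_n$, $\coker\pi_n$ up to finite length plus the $\Qp$-dimension, all of which are unchanged by $\otimes\Qp$ and by such finite modifications. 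A secondary technical point is checking that $\pi_n:\An\to A_{(n-1)}$ genuinely has finite kernel and cokernel, which follows from $\Phi_n\nmid\det(A)$: the kernel of $\Ltwo_n\to\Ltwo_{n-1}$ is $(\Phi_n)$-isotypic, and $A$ acts invertibly there after inverting $p$, so $\langle A\rangle$ meets it in a finite-index submodule. Once these integrality issues are dispatched, the displayed identity $\nabla\An=\ord_{\epsilon_n}\det A(\epsilon_n)$ drops out of the additivity of Kobayashi ranks along the rational block-triangularization.
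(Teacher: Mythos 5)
Your first step---computing $\dim_{\Qp}A_{(n-1)}\otimes\Qp$ from Lemma~\ref{lem:CRT}(i) together with the specialness hypothesis---is correct and is essentially what the paper does. The gap is in the second half, where you compute the torsion contribution by passing to a block-triangularization $U^{-1}A$ with $U\in\gl_2(\Lambda\otimes\Qp)$ and then assert that the integral discrepancies ``contribute $0$ to $\nabla$'' because the relevant data are ``unchanged by $\otimes\Qp$''. This is exactly backwards: $\ker\pi_n$ and $\coker\pi_n$ are finite $\Zp$-modules, and $\nabla\An$ records their precise lengths, so everything you need beyond the $\Qp$-dimension is destroyed by $\otimes\Qp$. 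Concretely, rescaling by an element that is a unit of $\Lambda\otimes\Qp$ but not of $\Lambda$ shifts $\ord_{\epsilon_n}$ of the determinant by a multiple of $\phi(p^n)$: compare $A=I_2$, for which $\nabla\An=0$, with $A=pI_2$, for which $\nabla\An=2\phi(p^n)=\ord_{\epsilon_n}(p^2)$ (both matrices are vacuously special). So the error introduced by working with a rationally equivalent model is of the same order as the $\mu$-type terms the theorem is designed to detect; the discrepancy systems are finite at each level but of length growing with $n$, so Lemma~\ref{kobayashirankzero}(ii) (which requires constant systems) does not apply to them. Moreover, the statement is an exact identity at a single fixed $n$, and it is used as such in Lemma~\ref{lem:M_n}, so ``passing to a cofinal subsystem'' cannot repair a level-by-level error.

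The way to make the argument integral is to exploit that ``special'' is a condition on whole columns, so the degenerate directions can be split off \emph{on the right without denominators}: the paper writes $A=BD$ in $\mat$, with $D$ diagonal whose entries are square-free products of those $\Phi_m$ ($0\le m\le n-1$) dividing the corresponding columns of $A$. Then $\dim_{\Qp}A_{(n-1)}\otimes\Qp=\ord_{\epsilon_n}\left(\det D(\epsilon_n)\right)$ (your computation), while an explicit intersection computation, $\langle\omega_nI_2,A\rangle\cap\langle\omega_{n-1}I_2\rangle=\langle\omega_nI_2,\omega_{n-1}B\rangle$ (proved by the case analysis $i_m=0,1,2$, which is where specialness enters again), identifies $\ker\pi_n\simeq\Ltwo/\langle\Phi_nI_2,B\rangle$; since $\Phi_n\nmid\det A$, this is finite of length $\ord_{\epsilon_n}\left(\det B(\epsilon_n)\right)$ by \cite[Lemma~7.8]{ip06}, and the two contributions add up to $\ord_{\epsilon_n}\left(\det A(\epsilon_n)\right)$. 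If you insist on a left change of basis, you would need either $U\in\gl_2(\Lambda)$, or an integral $U$ with $\det U$ prime to $\omega_n$ together with an explicit correction term $\ord_{\epsilon_n}\left(\det U(\epsilon_n)\right)$ (this is how the paper itself handles a change of basis in Proposition~\ref{prop:nablaY}); your interpolation construction provides neither, so as written the proposal does not establish the stated equality.
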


\begin{proof}
For each integer $m$ such that $0\le m\le n-1$, we write $i_m\in\{0,1,2\}$ for the number of columns of $A$ that are divisible by $\Phi_m$. Then, $A$ can be uniquely written as
\[
A=BD,
\]
where $B,D\in\mat$ such that $D$ is a diagonal matrix whose diagonal entries are square-free products of elements of a subset $\{\Phi_m:0\le m\le n-1\}$, and the number of times $\Phi_m$ appears in $\det D$ is precisely $i_m$.

Lemma~\ref{lem:CRT}(i) gives $$A_{(n-1)}\otimes\Qp\simeq\bigoplus_{0\le m\le n-1}\Qp(\zeta_{p^m})^{\oplus 2}\big/\langle A(\epsilon_m)\rangle.$$ The rank of the matrix $A(\epsilon_m)$ is equal to $2-i_m$ as $A$ is special relative to $n$. Therefore, the rank-nullity theorem tells us that
\[
    \dim_{\Qp(\zeta_{p^m})}\Qp(\zeta_{p^m})^{\oplus 2}\big/\langle A(\epsilon_m)\rangle=i_m.
    \]
    Hence,
    \[
    \dim_{\Qp}A_{(n-1)}\otimes\Qp=\sum_{m=0}^{n-1}\phi(p^m)i_m=\ord_{\epsilon_n}\left(\det D(\epsilon_n)\right).
    \]

    As the connecting map $\pi_n$ is surjective, it remains to show that the length of $\ker\pi_n$ is given by $\ord_{\epsilon_n}\left(\det B(\epsilon_n)\right)$. The isomorphism theorem tells us that
    \begin{equation}
    \ker\pi_n\simeq\frac{\langle \omega_{n-1}I_2,A\rangle}{\langle \omega_{n}I_2,A\rangle}\simeq\frac{\langle \omega_{n-1}I_2\rangle}{\langle \omega_{n}I_2,A\rangle\bigcap\langle \omega_{n-1}I_2\rangle}.
   \label{eq:kernel}     
    \end{equation}

    \noindent\textbf{\underline{Claim:}} $\langle \omega_{n}I_2,A\rangle\bigcap\langle \omega_{n-1}I_2\rangle=\langle\omega_n I_2,\omega_{n-1}B\rangle$.

    It is clear that $\langle\omega_n I_2,\omega_{n-1}B\rangle\subseteq\langle\omega_{n-1} I_2\rangle$. 
 Since the diagonal entries of the diagonal matrix $D$ are square-free products of elements of a subset of $\{\Phi_m:0\le m\le n-1\}$, we may write $\omega_{n-1}I_2=DD'$ for some diagonal matrix $D'$.
    
    Thus,
    \[
    \omega_{n-1}B=BDD'=AD',
    \]
    which implies that $\langle \omega_{n-1}B\rangle \subseteq\langle A\rangle$. Therefore, we deduce that 
    \[
    \langle\omega_n I_2,\omega_{n-1}B\rangle\subseteq\langle \omega_{n}I_2,A\rangle\bigcap\langle \omega_{n-1}I_2\rangle.
    \]

    We now prove the opposite inclusion. Let $v\in \langle \omega_{n}I_2,A\rangle\bigcap\langle \omega_{n-1}I_2\rangle$. We can regard $v$ as a column vector with entries in $\Lambda$ and write 
    \begin{equation}\label{eq:defn-v}
    v=A\begin{bmatrix}
        x\\ y
    \end{bmatrix}+\omega_nu,     
    \end{equation}
      where $x,y\in\Lambda$ and $u\in\Ltwo$. Since $v\in\langle \omega_{n-1}I_2\rangle$, we have
    \begin{equation}
        A\begin{bmatrix}
        x\\ y
    \end{bmatrix}=BD\begin{bmatrix}
        x\\ y
    \end{bmatrix}\equiv 0\mod\Phi_m,\quad 0\le m\le n-1.
    \label{eq:mod-Phi_m}
    \end{equation}
         We consider three cases.

          \noindent\textbf{\underline{Case 1:}}  If $i_m=0$, $A(\epsilon_m)$ is an invertible matrix as it is special relative to $n$. In this case, \eqref{eq:mod-Phi_m} implies that $x,y\equiv 0\mod\Phi_m$.
            
            \noindent\textbf{\underline{Case 2:}} If $i_m=1$, there exists a non-zero element $\begin{bmatrix}
                a\\b
            \end{bmatrix}\in\Ltwo$ such that
            \begin{equation}\label{eq:dichot}
            A\equiv\begin{bmatrix}
                a&0\\
                b&0
            \end{bmatrix}\mod \Phi_m\quad\text{or}\quad A\equiv\begin{bmatrix}
                0&a\\
                0&b
            \end{bmatrix}\mod \Phi_m
            \end{equation}
            Let's suppose the first congruence relation holds. Then \eqref{eq:mod-Phi_m} implies that $\Phi_m\mid x$. Furthermore, $D\equiv\begin{bmatrix}
                d_m&0\\
                0&0
            \end{bmatrix}\mod\Phi_m$ for some non-zero $d_m\in\Lambda$. Therefore, $D\begin{bmatrix}
                x\\ y
            \end{bmatrix}\equiv0\mod\Phi_m$. If the second congruence relation in \eqref{eq:dichot} holds, the same argument shows that $D\begin{bmatrix}
                x\\ y
            \end{bmatrix}\equiv0\mod\Phi_m$.

               \noindent\textbf{\underline{Case 3:}} If $i_m=2$, then $D\equiv0\mod\Phi_m$. In particular, $D\begin{bmatrix}
                x\\ y
            \end{bmatrix}\equiv0\mod\Phi_m$.

            In all three cases, we have $D\begin{bmatrix}
        x\\ y
    \end{bmatrix}\equiv 0\mod\Phi_m$. Therefore, we deduce that \[
    D\begin{bmatrix}
        x\\ y
    \end{bmatrix}=\omega_{n-1}\begin{bmatrix}
        x'\\y'
    \end{bmatrix}\]
    for some $x',y'\in\Lambda$. Combined with \eqref{eq:defn-v}, we have
    \[
    v=\omega_{n-1}B\begin{bmatrix}
        x'\\y'
    \end{bmatrix}+\omega_nu\in \langle\omega_nI_2,\omega_{n-1}B\rangle.
    \]
    This shows that $\langle \omega_{n}I_2,A\rangle\bigcap\langle \omega_{n-1}I_2\rangle\subseteq\langle\omega_n I_2,\omega_{n-1}B\rangle$, and our claim follows.

We can now rewrite \eqref{eq:kernel} as
    \[\ker\pi_n\simeq\frac{\langle\omega_{n-1}I_2\rangle}{\langle \omega_n I_2,\omega_{n-1}B\rangle}\simeq\frac{\Ltwo}{\langle \Phi_n I_2,B\rangle},\]
    where the last isomorphism is given by
    \begin{align*}
        \frac{\Ltwo}{\langle \Phi_n I_2,B\rangle}&\rightarrow\frac{\langle\omega_{n-1}I_2\rangle}{\langle \omega_n I_2,\omega_{n-1}B\rangle}\\
        x&\mapsto \omega_{n-1}x.
    \end{align*}
    As $\Phi_n\nmid\det A$ by assumption, it follows from \cite[Lemma~7.8]{ip06} that $\ker\pi_n$ is finite, with length equal to $\ord_{\epsilon_n}\left(\det B(\epsilon_n)\right)$, as desired.
\end{proof}

\section{Proof of Theorem \ref{maintheorem}}\label{proofofthemaintheorem}

The proof of Theorem~\ref{maintheorem} is divided into a number of steps. Following \cite[\S 10]{kob03}, we define for each integer $n\ge0$
\begin{align*}
    \ccY(E/K_n)&:=\coker \left( H^1(G_\Sigma(K_n),T) \longrightarrow\frac{H^1(k_n,T)}{E(k_n)\otimes\bbZ_p}\right).
\end{align*}
The key ingredient to studying the growth of $\rank_\bbZ E(K_n)$ and $\Sha(E/K_n )[p^\infty]$ is understanding $\nabla \ccY(E/K_n)$. Similar to \cite[\S10]{kob03}, we do so using fine Selmer groups and several auxiliary modules. 
\subsection{Fine Selmer groups}
We recall the definition of fine Selmer groups:
\begin{definition}
    For $0\leq n \leq \infty$, we define the fine Selmer group  
        \begin{equation*}
     \Sel^0(E/K_n):=\ker\left( \Sel_{p^\infty}(E/K_n)\longrightarrow H^1(k_n, E[p^\infty]) \right).
    \end{equation*}
    
\end{definition}

Equivalently, we have
 \begin{equation*}
     \Sel^0(E/K_n):=\ker\left( H^1(G_\Sigma(K_n),E[p^\infty])\longrightarrow \prod_{\fraq_n\in \Sigma(K_n)} H^1(K_{n,\fraq_n}, E[p^\infty]) \right),
    \end{equation*}
where $\Sigma(K_n)$ denotes the set of primes of $K_n$ lying above $\Sigma$. The Pontryagin duals of $\Sel_{p^\infty}(E/K_n)$, $\Sel^\pm(E/K_n)$ and  $ \Sel^0(E/K_n)$ are denoted by $\ccX(E/K_n)$, $\ccX^\pm(E/K_n)$ and $\ccX^0(E/K_n)$, respectively.

\vspace{2mm}

\begin{lemma}\label{controltheorem}
    The natural restriction map
    \begin{equation*}
         \Sel^0(E/K_n) \longrightarrow  \Sel^0(E/K_\infty)^{\Gamma_n}
    \end{equation*}
    is injective and has finite cokernel. Furthermore, the cardinality of the cokernel stabilizes as $n\rightarrow\infty$.
\end{lemma}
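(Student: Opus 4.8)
The plan is to prove this control theorem for the fine Selmer group by the standard Mazur-type argument, namely by fitting the restriction map into a commutative diagram relating the defining Selmer sequences over $K_n$ and over $K_\infty$, and then applying the snake lemma.

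First I would write down the two defining exact sequences. Over $K_n$ we have
\[
0\longrightarrow \Sel^0(E/K_n)\longrightarrow H^1(G_\Sigma(K_n),E[p^\infty])\xrightarrow{\;\lambda_n\;}\prod_{\fraq_n\in\Sigma(K_n)}H^1(K_{n,\fraq_n},E[p^\infty]),
\]
and similarly over $K_\infty$ with the maps $\lambda_\infty$ and the summands taken over primes of $K_\infty$. Taking $\Gamma_n$-invariants of the $K_\infty$-sequence and using the inflation--restriction sequences in each column, one obtains a commutative diagram with exact rows whose vertical maps are the natural restriction maps $h_G$ (on global cohomology), $h_\Sigma$ (on the product of local cohomologies), and the map in the statement. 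The snake lemma then gives
\[
0\to\ker h_G\to\ker\big(\Sel^0(E/K_n)\to\Sel^0(E/K_\infty)^{\Gamma_n}\big)\to\ker h_\Sigma
\]
and controls the cokernel by $\coker h_G$ together with $\ker h_\Sigma$.

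Next I would bound each of these terms. The kernel and cokernel of $h_G$ are controlled by $H^i(\Gamma_n,E(K_\infty)[p^\infty])$ for $i=1,2$; since $\Gamma_n\cong\Zp$ has cohomological dimension one, the $H^2$ term vanishes, and the $H^1$ term is finite (indeed $E(K_\infty)[p^\infty]$ is finite — in fact trivial, as recorded in the commented-out Lemma in the excerpt following \cite[Lemma~2.1]{ip06} — so $\ker h_G=0$ and $\coker h_G$ is finite and eventually stable). For $\ker h_\Sigma$ one analyzes prime by prime: for $\fraq_n\nmid p$ the local contribution is controlled by $H^1(\Gamma_{n,\fraq},E(K_{\infty,\fraq})[p^\infty])$, which is trivial for primes that split completely and finite (and eventually zero or constant) otherwise since only finitely many primes ramify in $K_\infty/K$; for the unique prime above $p$, which is totally ramified, the relevant group is again finite because $E(k_\infty)[p^\infty]$ is finite. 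Injectivity of the restriction map then follows from $\ker h_G=0$, and finiteness of the cokernel from finiteness of $\coker h_G$ and $\ker h_\Sigma$. For the stabilization claim, I would invoke that for $n\gg 0$ the relevant $H^1(\Gamma_n,-)$ groups of these finite modules become constant (the $\Gamma$-action on a finite $p$-group factors through a finite quotient, so the Herbrand quotient computation stabilizes), matching the argument in \cite[\S10]{kob03}.

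The main obstacle, and the only genuinely nontrivial input, is the control of the local terms at the bad primes $\ell\neq p$ and at $p$: one must verify that $E(K_{\infty,\lambda})[p^\infty]$ is finite at every place, which reduces via $p\nmid h_K$ and ramification considerations to the finiteness of $E(k_\infty)[p^\infty]$ and of $E(K_\infty)[p^\infty]$; granting these (which follow as in \cite{ip06}), everything else is the routine snake-lemma bookkeeping. I would present the argument by citing \cite[\S10]{kob03} and \cite[Lemma~2.1]{ip06} for the torsion vanishing and then spelling out the diagram and the term-by-term bounds.
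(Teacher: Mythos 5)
Your overall strategy is sound and, for the injectivity part, coincides with the paper: the paper also deduces injectivity from $E(K_\infty)[p^\infty]=0$ together with inflation--restriction (this vanishing is the content of the argument following \cite[Lemma~2.1]{ip06}). For the cokernel, the paper does not run the snake-lemma argument at all: it simply quotes \cite[Theorem~3.3]{Lim20} (see also \cite[Proposition~4.1]{Wut04}), which packages exactly the finiteness and stabilization you are trying to establish by hand. So your proposal amounts to re-proving the cited control theorem by the standard Mazur--Greenberg diagram chase, which is a legitimate alternative; what the citation buys the paper is precisely the two local points that your sketch treats too quickly.

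Two things need repair in your write-up. First, the displayed sequence is mis-ordered: since $\Sel^0(E/K_n)\subseteq H^1(G_\Sigma(K_n),E[p^\infty])$, the correct statement is that the kernel of the Selmer restriction injects into $\ker h_G$ (and the cokernel is controlled by $\coker h_G$ together with the kernel of the local restriction map restricted to the image), i.e.\ the roles of $\ker h_G$ and $\ker\bigl(\Sel^0(E/K_n)\to\Sel^0(E/K_\infty)^{\Gamma_n}\bigr)$ must be swapped; your later sentence ``injectivity follows from $\ker h_G=0$'' only makes sense with the corrected ordering. Second, and more substantively, at a prime $v\nmid p$ of $K$ that is finitely decomposed in $K_\infty/K$, the module $E(K_{\infty,w})[p^\infty]$ need \emph{not} be finite (it can have positive $\Zp$-corank, e.g.\ at good primes in the unramified local $\Zp$-extension), so finiteness and eventual constancy of $H^1\bigl(\gal(K_{\infty,w}/K_{n,v}),E(K_{\infty,w})[p^\infty]\bigr)$ is not a consequence of ``only finitely many primes ramify''; one needs the standard Frobenius-weight argument (the eigenvalues of Frobenius on the Tate module at $v\nmid p$ are Weil numbers, so $\gamma-1$ is surjective on the divisible part with finite kernel), or one should simply cite \cite[Theorem~3.3]{Lim20} or \cite[Proposition~4.1]{Wut04} as the paper does. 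One should also note that the number of primes of $K_n$ above such $v$ stabilizes for $n\gg0$ (completely split primes contribute trivially), which is what makes the total cokernel bound independent of $n$. With these corrections your argument goes through and recovers the statement, including the stabilization of the cokernel's cardinality.
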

\begin{proof}
The injectivity follows from the fact that $E(K_\infty)[p^\infty]=0$ and the inflation-restriction exact sequence. The assertions on the cokernel is a special case of \cite[Theorem 3.3]{Lim20} (see also \cite[Proposition~4.1]{Wut04}).
\end{proof}

\begin{lemma}\label{kobayashirankofX0}
Assume that Hypothesis~\ref{cotorsionconjecture} holds.   Let $n\geq 0$ be an integer.
    \begin{enumerate}[label=(\roman*)]
        \item We have the following short exact sequence:
        \begin{equation*}
            0 \longrightarrow \ccY(E/K_n) \longrightarrow \ccX(E/K_n) \longrightarrow \ccX^0(E/K_n) \longrightarrow 0.
        \end{equation*}

\vspace{1.5mm}
        
        \item  For $n\gg 0$, $\nabla \ccX^0(E/K_n)$ is defined and satisfies the equality
        \begin{equation*}
            \nabla \ccX^0(E/K_n)= \nabla \ccX^0(E/K_\infty)_{\Gamma_n}.
        \end{equation*}
    \end{enumerate}
\end{lemma}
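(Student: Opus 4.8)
The plan is to establish the two assertions in turn, following the template of \cite[\S10]{kob03} (see also \cite[\S3]{LL22}). For part (i), I would begin with the tautological short exact sequence coming from the definition of the fine Selmer group. For every prime $\fraq\nmid p$ of $K_n$ one has $E(K_{n,\fraq})\otimes\bbQ_p/\bbZ_p=0$, so the local conditions cutting out $\Sel_{p^\infty}(E/K_n)$ away from $p$ are trivial; hence localization at $p$ sends $\Sel_{p^\infty}(E/K_n)$ into $\hatE(\fram_n)\otimes\bbQ_p/\bbZ_p\cong E(k_n)\otimes\bbQ_p/\bbZ_p$, giving
\[
0\longrightarrow\Sel^0(E/K_n)\longrightarrow\Sel_{p^\infty}(E/K_n)\xrightarrow{\;\loc_p\;}\im(\loc_p)\longrightarrow0 .
\]
After taking Pontryagin duals it remains to identify $\im(\loc_p)^\vee$ with $\ccY(E/K_n)$, and for this I would invoke global duality. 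Under local Tate duality, $E(k_n)\otimes\bbZ_p\subseteq H^1(k_n,T)$ and $E(k_n)\otimes\bbQ_p/\bbZ_p\subseteq H^1(k_n,E[p^\infty])$ are exact orthogonal complements --- the same self-duality of the finite part that underlies the definition of $H^1_\bullet(k_n,T)$ in Section~\ref{localpoints} --- so $\ccY(E/K_n)^\vee$ is the kernel of the map $E(k_n)\otimes\bbQ_p/\bbZ_p\to H^1(G_\Sigma(K_n),T)^\vee$ dual to $\loc_p$. By exactness of the Poitou--Tate nine-term sequence for $E[p^\infty]$ over $K_n$, this kernel consists of the classes in $E(k_n)\otimes\bbQ_p/\bbZ_p$ that extend to a global class in $H^1(G_\Sigma(K_n),E[p^\infty])$ vanishing locally away from $p$; imposing in addition the Kummer local condition at $p$ recovers exactly $\Sel_{p^\infty}(E/K_n)$, so the kernel equals $\im(\loc_p)$. (This step does not use Hypothesis~\ref{cotorsionconjecture}.)

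For part (ii), I would first record that $\ccX^0(E/K_\infty)$ is a torsion $\Lambda$-module: a fine Selmer class is locally trivial at $p$, hence a fortiori lies in $H^1_\bullet(k_\infty,E[p^\infty])$, so $\Sel^0(E/K_\infty)\subseteq\Sel^\bullet(E/K_\infty)$ for $\bullet\in\{+,-\}$; thus $\ccX^0(E/K_\infty)$ is a quotient of $\ccX^\bullet(E/K_\infty)$, which is $\Lambda$-torsion by Hypothesis~\ref{cotorsionconjecture}. Consequently Lemma~\ref{kobayashirankandcharacteristicpolynomial}(ii), applied to $M=\ccX^0(E/K_\infty)$, shows that $\nabla\ccX^0(E/K_\infty)_{\Gamma_n}$ is defined for $n\gg0$. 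Next I would dualize the control theorem (Lemma~\ref{controltheorem}) to obtain a short exact sequence of projective systems
\[
0\longrightarrow C_n\longrightarrow\ccX^0(E/K_\infty)_{\Gamma_n}\longrightarrow\ccX^0(E/K_n)\longrightarrow0 ,
\]
where $C_n$ is finite and $|C_n|$ is independent of $n$ for $n\gg0$. Since each $C_n$ is finite, $\nabla C_n$ is defined, and the identity $\length_{\bbZ_p}(\ker(C_n\to C_{n-1}))-\length_{\bbZ_p}(\coker(C_n\to C_{n-1}))=\length_{\bbZ_p}C_n-\length_{\bbZ_p}C_{n-1}$ shows $\nabla C_n=0$ once the cardinality has stabilized. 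Applying Lemma~\ref{kobayashirankzero}(i) to the displayed exact sequence, with $\nabla C_n$ and $\nabla\ccX^0(E/K_\infty)_{\Gamma_n}$ both defined for $n\gg0$, then gives that $\nabla\ccX^0(E/K_n)$ is defined for $n\gg0$ and equals $\nabla\ccX^0(E/K_\infty)_{\Gamma_n}-\nabla C_n=\nabla\ccX^0(E/K_\infty)_{\Gamma_n}$, as claimed.

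The step I expect to be the main obstacle is the duality identification in part (i): making $\ccY(E/K_n)^\vee\cong\im(\loc_p)$ an honest equality, rather than one valid only up to finite groups, relies on the exactness of the Poitou--Tate sequence combined with the precise orthogonality of the finite parts $E(k_n)\otimes\bbZ_p$ and $E(k_n)\otimes\bbQ_p/\bbZ_p$ under local Tate duality. Everything else amounts to routine bookkeeping of Kobayashi ranks via Lemmas~\ref{kobayashirankzero} and~\ref{kobayashirankandcharacteristicpolynomial}.
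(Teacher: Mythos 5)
Your proposal is correct and follows essentially the same route as the paper: part (i) is exactly the Poitou--Tate exact sequence that the paper quotes directly from \cite[Proposition A.3.2]{Per00} (you simply re-derive it by dualizing the tautological fine Selmer sequence and using the orthogonality of the Kummer images under local Tate duality), and part (ii) is the same combination of the torsionness of $\ccX^0(E/K_\infty)$, the control theorem (Lemma~\ref{controltheorem}), and the vanishing of the Kobayashi rank of the finite error terms. Your explicit verification that $\nabla C_n=0$ once the cardinalities stabilize is a correct fleshing-out of the paper's appeal to Lemma~\ref{kobayashirankzero}.
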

\begin{proof}
   By \cite[Proposition A.3.2]{Per00}, we have the following Poitou--Tate exact sequence
            \begin{multline}\label{eqn:poitoutateexactsequence}
       H^1(G_\Sigma(K_n),T) \longrightarrow    \dfrac{H^1(k_n, T)}{E(k_n)\otimes \Zp} \longrightarrow \ccX(E/K_n) \longrightarrow \ccX^0(E/K_n) \longrightarrow 0.
\end{multline}
   Assertion (i) follows from the exact sequence \eqref{eqn:poitoutateexactsequence} and the definition of $\ccY(E/K_n)$ (see also \cite[(7.18) and (10.35)]{kob03}).

    \vspace{2mm}

As $\ccX^0(E/K_\infty)$ is a quotient of $\ccX^\pm(E/K_\infty)$, which are assumed to be $\Lambda$-torsion, it follows that $\ccX^0(E/K_\infty)$ is also $\Lambda$-torsion. By Lemma \ref{controltheorem}, we see that the kernel and cokernel of the natural map
    \begin{equation*}
        \ccX^0(E/K_\infty)_{\Gamma_n} \longrightarrow \ccX^0(E/K_n)
    \end{equation*}
    are finite and independent of $n$. Combining this with Lemma \ref{kobayashirankzero}(ii), the second assertion follows.
\end{proof}

\subsection{Calculating certain Kobayashi ranks via special matrices}
We introduce auxiliary modules (denoted by $M_n$) in preparation for the calculation of  $\nabla\ccY(E/K_n)$.
The composition
\begin{equation}\label{lambdahomomorphism}
    H^1_{\iw}(K_\infty,T) \xrightarrow{\;\loc_p\;} H^1_\iw(k_\infty,T) \xrightarrow{\; \col^{\pm}} \Lambda_\ccO \simeq \Lambda^{\oplus 2}
\end{equation}
 is a $\Lambda$-homomorphism between two free $\Lambda$-modules of rank $2$ (see Proposition \ref{consequenceofcotorsionness}). We write the composition of this map with projection to the two coordinates as
 \[
 \col^\pm_i: H^1_{\iw}(K_\infty,T) \rightarrow \Lambda,\quad i=1,2.
 \]

\begin{definition}
    Let $\bu=(u_1,u_2)\in H^1_{\iw}(K_\infty,T) ^{\oplus 2}$ and $n\ge0$ be an integer. We define
    \[
    F_n(\bu)=\begin{bmatrix}
     \tomg_n^+\col^-_1(u_1)+\tomg_n^-\col_1^+(u_1) & \tomg_n^+\col^-_1(u_2)+\tomg_n^-\col_1^+(u_2)  \\
  \tomg_n^+\col^-_2(u_1)+\tomg_n^-\col_2^+(u_1)   &  \tomg_n^+\col^-_2(u_2)+\tomg_n^-\col_2^+(u_2)
    \end{bmatrix}\in\mat.
    \]
    Furthermore, we define for $\bullet\in\{+,-\}$
    \[
    \underline{\col}^\bullet(\bu)=\begin{bmatrix}
        \col_1^\bullet(u_1)&\col_1^\bullet(u_2)\\
        \col_2^\bullet(u_1)&\col_2^\bullet(u_2)
    \end{bmatrix}\in\mat.
    \]
\end{definition}

\begin{lemma}\label{lem:congruent-Fu}
    Let $\bu=(u_1,u_2)\in H^1_{\iw}(K_\infty,T) ^{\oplus 2}$ and $n\ge0$ be an integer. For $0\le m\le n$, there exists a non-zero scalar $c_{m,n}\in\Qp(\zeta_{p^m})^\times$ such that
    $F_n(\bu)(\epsilon_m)$ is equal to $c_{m,n}  \underline{\col}^+(\bu)(\epsilon_m) $ or $c_{m,n}  \underline{\col}^-(\bu)(\epsilon_m) $, depending on the parity of $m$.
\end{lemma}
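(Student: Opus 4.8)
The plan is to unwind the definition of $F_n(\bu)$ and reduce the statement to evaluating two products of cyclotomic polynomials at the points $\epsilon_m$. Directly from the defining formula, one checks entrywise that
\[
F_n(\bu)=\tomg_n^+\cdot\underline{\col}^-(\bu)+\tomg_n^-\cdot\underline{\col}^+(\bu)
\]
in $\mat$, since the $(i,j)$-entry on the right-hand side is $\tomg_n^+\col_i^-(u_j)+\tomg_n^-\col_i^+(u_j)$. Applying the evaluation homomorphism $X\mapsto\epsilon_m$ for $0\le m\le n$ yields
\[
F_n(\bu)(\epsilon_m)=\tomg_n^+(\epsilon_m)\,\underline{\col}^-(\bu)(\epsilon_m)+\tomg_n^-(\epsilon_m)\,\underline{\col}^+(\bu)(\epsilon_m),
\]
so it remains to show that for $1\le m\le n$ exactly one of the scalars $\tomg_n^\pm(\epsilon_m)$ is nonzero, with the case $m=0$ handled separately.

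For $1\le m\le n$: recall that $\epsilon_m=\zeta_{p^m}-1$ is a root of $\Phi_m$ and of no other $\Phi_j$, since the polynomials $\Phi_j$ in $1+X$ are pairwise coprime. If $m$ is odd, then $\Phi_m$ is one of the factors of $\tomg_n^-=\prod_{1\le j\le n,\ j\text{ odd}}\Phi_j$, so $\tomg_n^-(\epsilon_m)=0$, whereas $\tomg_n^+(\epsilon_m)=\prod_{1\le j\le n,\ j\text{ even}}\Phi_j(\epsilon_m)$ is a product of nonzero elements of $\Qp(\zeta_{p^m})$. Setting $c_{m,n}:=\tomg_n^+(\epsilon_m)$ then gives $F_n(\bu)(\epsilon_m)=c_{m,n}\,\underline{\col}^-(\bu)(\epsilon_m)$. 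If $m$ is even (so $m\ge2$) the situation is symmetric: $\Phi_m\mid\tomg_n^+$, hence $\tomg_n^+(\epsilon_m)=0$, and $c_{m,n}:=\tomg_n^-(\epsilon_m)\in\Qp(\zeta_{p^m})^\times$ gives $F_n(\bu)(\epsilon_m)=c_{m,n}\,\underline{\col}^+(\bu)(\epsilon_m)$.

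For $m=0$ we have $\epsilon_0=0$, and here \emph{both} $\tomg_n^+(0)$ and $\tomg_n^-(0)$ are nonzero, each being a power of $p$ since $\Phi_j(0)=p$ for $j\ge1$. The relevant point is instead that $\underline{\col}^+(\bu)(0)=0$: by Proposition~\ref{surjectivityofcolemanmaps} the image of $\col^+$ is contained in $I_\ccO=X\Lambda_\ccO$, so under the identification $\Lambda_\ccO\simeq\Lambda^{\oplus2}$ each entry $\col_i^+(u_j)$ lies in $X\Lambda$ and hence vanishes at $X=0$. Therefore $F_n(\bu)(0)=\tomg_n^+(0)\,\underline{\col}^-(\bu)(0)$, and we take $c_{0,n}:=\tomg_n^+(0)\in\Qp^\times$; this places $m=0$ alongside the odd values of $m$, in accordance with the convention $\mathbf{1}\in\Xi^-$.

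The argument is essentially bookkeeping with the factorizations $\omega_n^\pm=X\tomg_n^\pm$, so I do not anticipate a genuine obstacle. The only delicate point is the value $m=0$, where the naive parity dichotomy fails at the level of the scalars $\tomg_n^\pm(\epsilon_0)$ and must instead be extracted from the exceptional-zero behaviour of the plus Coleman map, namely the fact recorded in Proposition~\ref{surjectivityofcolemanmaps} that its image lands in the augmentation ideal.
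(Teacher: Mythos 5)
Your proof is correct and follows essentially the same route as the paper: the parity dichotomy for $1\le m\le n$ comes from the factorization $\tomg_n^+\tomg_n^-=\prod_{1\le m\le n}\Phi_m$, and the case $m=0$ is handled exactly as in the paper by invoking Proposition~\ref{surjectivityofcolemanmaps} to get $\underline{\col}^+(\bu)(0)=0$. No gaps.
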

\begin{proof}
Recall that $\tomg_n^+\tomg_n^-=\displaystyle\prod_{1\le m\le n}\Phi_m$. Therefore, for each $1\le m\le n$, exactly one of the two elements of $\{\tomg_n^+(\epsilon_m),\tomg_n^-(\epsilon_m)\}$ vanishes, from which the assertion for $m\ge1$ follows.

If $m=0$, we have $\epsilon_m=0$. Proposition~\ref{surjectivityofcolemanmaps} tells us that $\col^+_i(u_j)(0)=0$. Therefore,
\[
F_n(\bu)(0)=\tomg_n^+(0)  \underline{\col}^-(\bu),
\]
as desired.
\end{proof}

\begin{lemma}\label{lem:nonzero}
Let $\bullet\in\{+,-\}$. Suppose that $\ccX^\bullet(E/K_\infty)$ is $\Lambda$-torsion.
If $\bu=(u_1,u_2)\in H^1_{\iw}(K_\infty,T)^{\oplus 2}$ such that the quotient of $\Lambda$-modules
$H^1_{\iw}(K_\infty,T)\big/\langle u_1,u_2\rangle$ is $\Lambda$-torsion, then $\det  \underline{\col}^\bullet(\bu)\ne 0$.
\end{lemma}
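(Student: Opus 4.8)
The plan is to argue by contradiction. Suppose $\det\underline{\col}^\bullet(\bu)=0$. By Proposition~\ref{consequenceofcotorsionness}(i), $H^1_\iw(K_\infty,T)$ is a free $\Lambda$-module of rank $2$, and since the quotient $H^1_\iw(K_\infty,T)/\langle u_1,u_2\rangle$ is $\Lambda$-torsion, the elements $u_1,u_2$ are $\Lambda$-linearly independent. The columns of $\underline{\col}^\bullet(\bu)$ are the coordinate vectors of the images of $u_1$ and $u_2$ under the composition \eqref{lambdahomomorphism}; since $\Lambda$ is an integral domain, the vanishing of the determinant forces these columns to be linearly dependent over the fraction field of $\Lambda$. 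Clearing denominators I would obtain $a,b\in\Lambda$, not both zero, such that $w:=au_1+bu_2$ is a non-zero element of $H^1_\iw(K_\infty,T)$ (non-zero by the linear independence of $u_1,u_2$) lying in the kernel of $\col^\bullet\circ\loc_p$.

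The next step is to feed $w$ into global duality. By Proposition~\ref{kernelofthepairing} and the construction of the signed Coleman maps, $\ker\col^\bullet=\varprojlim_n H^1_\bullet(k_n,T)$, the inverse limit of the local conditions at $p$ orthogonal, under the local Tate pairing, to those cutting out $\Sel^\bullet$; hence $\loc_p(w)\in\ker\col^\bullet$. For each prime $\ell\in\Sigma$ with $\ell\ne p$, the local Iwasawa cohomology $H^1_\iw(K_{\infty,\ell},T)$ is a torsion $\Lambda$-module, so the image of $w$ under the localisation maps away from $p$, taken modulo the unramified subgroups, is $\Lambda$-torsion; as $\Sigma$ is finite, I would choose $0\ne g\in\Lambda$ annihilating it. Since $\ker\col^\bullet$ is a $\Lambda$-submodule and $H^1_\iw(K_\infty,T)$ is torsion-free, the element $gw$ is then non-zero, satisfies $\loc_p(gw)\in\ker\col^\bullet$, and is unramified away from $p$; that is, $gw$ lies in the dual Selmer module $\frakS^\bullet(E/K_\infty)\subseteq H^1_\iw(K_\infty,T)$ occurring in the Poitou--Tate exact sequence of \cite[Proposition~2.8]{LL22}.

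Finally, the hypothesis that $\ccX^\bullet(E/K_\infty)$ is $\Lambda$-torsion, together with the corank computation in that Poitou--Tate sequence, forces $\frakS^\bullet(E/K_\infty)$ to be $\Lambda$-torsion, and hence to be zero, as it is a submodule of the torsion-free $\Lambda$-module $H^1_\iw(K_\infty,T)$; this is exactly \cite[Proposition~2.8]{LL22}. Since $gw\ne0$, this is a contradiction, and therefore $\det\underline{\col}^\bullet(\bu)\ne0$.

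I expect the main obstacle to be the second step: making precise that the kernel of $\col^\bullet\circ\loc_p$, after the correction at the finitely many primes of $\Sigma$ away from $p$, is captured by the dual Selmer module $\frakS^\bullet(E/K_\infty)$, and that its vanishing is equivalent to the cotorsionness of $\Sel^\bullet(E/K_\infty)$. Once the Poitou--Tate formalism recorded in \cite{LL22} is available, the remaining arguments are formal.
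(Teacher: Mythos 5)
Your proposal is correct in substance, but it runs the duality argument from the opposite end compared with the paper. The paper's proof is essentially a one-liner: it quotes the Poitou--Tate exact sequence
\[
H^1_{\iw}(K_\infty,T)\longrightarrow \frac{H^1_\iw(k_\infty,T)}{\ker\col^\bullet}\longrightarrow\ccX^\bullet(E/K_\infty)\longrightarrow\ccX^0(E/K_\infty)\longrightarrow0,
\]
so that torsionness of $\ccX^\bullet(E/K_\infty)$ forces the first map to have $\Lambda$-torsion cokernel; since $H^1_\iw(k_\infty,T)/\ker\col^\bullet\simeq\image(\col^\bullet)$ has $\Lambda$-rank $2$ (Proposition~\ref{surjectivityofcolemanmaps}) and $\langle u_1,u_2\rangle$ has torsion quotient in $H^1_{\iw}(K_\infty,T)$, the columns of $\underline{\col}^\bullet(\bu)$ span a rank-two submodule of $\Lambda^{\oplus2}$, whence $\det\underline{\col}^\bullet(\bu)\ne0$, with no contradiction argument and no compact Selmer group needed. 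You instead work on the kernel side: assuming $\det=0$ you manufacture a non-zero class $w$ with $\loc_p(w)\in\ker\col^\bullet$ (this step is fine, using Proposition~\ref{consequenceofcotorsionness}(i) and Proposition~\ref{kernelofthepairing}), multiply by a non-zero $g\in\Lambda$ to force the dual local conditions away from $p$, and land in the compact Selmer module $\frakS^\bullet(E/K_\infty)$, whose vanishing under the cotorsion hypothesis yields the contradiction. This buys a more "structural" statement (the vanishing of $\frakS^\bullet$), but at the cost of two extra inputs the paper's proof never touches: (a) the $\Lambda$-torsionness of $H^1_\iw(K_{\infty,\ell},T)$ for $\ell\ne p$, which is true here but deserves a word since primes can split completely in the anticyclotomic tower (it holds because $H^1(K_\lambda,T)$ is finite for $\ell\ne p$); and (b) the equivalence between cotorsionness of $\Sel^\bullet(E/K_\infty)$ and $\frakS^\bullet(E/K_\infty)=0$, which you cite as \cite[Proposition 2.8]{LL22} --- that reference is for the cyclotomic setting, so in the present anticyclotomic inert setting you would need to redo the corank count ($\corank_\Lambda H^1-\corank_\Lambda H^2=2$ and total local corank $2$) and invoke torsion-freeness of $H^1_{\iw}(K_\infty,T)$; this is exactly the Perrin-Riou formalism \cite[Proposition A.3.2]{Per00} that the paper uses directly, so your route is a legitimate but longer detour through the same duality.
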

\begin{proof}
     By \cite[Proposition A.3.2]{Per00}, we have the following Poitou--Tate exact sequence
 \begin{equation*}
     H^1_{\iw}(K_\infty,T)  \longrightarrow \frac{H^1_\iw(k_\infty,T)}{\ker \col^\bullet}   \longrightarrow\ccX^\bullet(E/K_\infty) \longrightarrow \ccX^0(E/K_\infty) \longrightarrow 0.
\end{equation*}
Hence, the result follows from the assumption that $\ccX^\bullet(E/K_\infty)$ is $\Lambda$-torsion.
\end{proof}
 
    \begin{prop}\label{prop:good-basis}
         Suppose that Hypothesis~\ref{cotorsionconjecture} holds.
Then, there exists $\bu=(u_1,u_2)\in H^1_{\iw}(K_\infty,T)^{\oplus 2}$ such that 
$H^1_{\iw}(K_\infty,T) \big/\langle u_1,u_2\rangle$ is $\Lambda$-torsion and that $F_n(\bu)$ is a special matrix relative to $n$ (in the sense of Definition~\ref{def:app}) for all $n\ge0$.
    \end{prop}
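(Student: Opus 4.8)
The plan is to construct $\bu$ by a successive-approximation argument, choosing $u_1,u_2$ so that at each cyclotomic character $\epsilon_m$ the matrix $F_n(\bu)(\epsilon_m)$ — which by Lemma~\ref{lem:congruent-Fu} is a nonzero scalar multiple of $\underline{\col}^+(\bu)(\epsilon_m)$ or $\underline{\col}^-(\bu)(\epsilon_m)$ — has a column structure forcing the ``special'' dichotomy. First I would record what needs to be arranged: writing $A=F_n(\bu)$ with columns $A_1,A_2$, being special relative to $n$ means that for each $m\le n$ with $\Phi_m\mid\det A$, either $\Phi_m$ divides both entries of $A_1$ or it divides both entries of $A_2$. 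Since $\det F_n(\bu)(\epsilon_m)=c_{m,n}^2\det\underline{\col}^\bullet(\bu)(\epsilon_m)$ for the appropriate $\bullet=\bullet(m)$, the condition $\Phi_m\mid\det A$ is equivalent to $\det\underline{\col}^{\bullet(m)}(\bu)(\epsilon_m)=0$, i.e. the two columns of $\underline{\col}^{\bullet(m)}(\bu)(\epsilon_m)$ are $\Qp(\zeta_{p^m})$-linearly dependent. So it suffices to find $\bu$ such that $H^1_{\iw}(K_\infty,T)/\langle u_1,u_2\rangle$ is $\Lambda$-torsion and such that, whenever the two specializations $\col^{\bullet(m)}(u_1)(\epsilon_m),\col^{\bullet(m)}(u_2)(\epsilon_m)\in\Qp(\zeta_{p^m})^{\oplus2}$ are dependent, in fact one of them vanishes (equivalently, the pair of $1\times2$ rows of $\underline{\col}^{\bullet}(\bu)(\epsilon_m)$ has one column zero).

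Next I would set up the selection. Fix a $\Lambda$-basis $e_1,e_2$ of $H^1_{\iw}(K_\infty,T)$ (Proposition~\ref{consequenceofcotorsionness}(i)), so that $\col^\pm$ composed with $\loc_p$ is given by $2\times2$ matrices $C^\pm\in\mat$ over $\Lambda$; both $\det C^+$ and $\det C^-$ are nonzero by Lemma~\ref{lem:nonzero} applied to $\bu=(e_1,e_2)$ (whose quotient is zero, hence torsion). Now I would choose $u_1=e_1$ and $u_2=e_2+g\cdot e_1$ for a carefully chosen $g\in\Lambda$; then $\langle u_1,u_2\rangle=\langle e_1,e_2\rangle$ so the torsion-quotient condition is automatic. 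The matrix $\underline{\col}^\bullet(\bu)$ is $C^\bullet$ with its second column replaced by (second column)$+g\cdot$(first column); its determinant is unchanged, equal to $\det C^\bullet\ne0$, so $\Phi_n\nmid\det F_n(\bu)$ holds for all but finitely many $n$ — but actually I must get \emph{all} $n$, which I handle below. The point of $g$ is: at each $m$ with $\det C^{\bullet(m)}(\epsilon_m)=0$, the two columns of $C^{\bullet(m)}(\epsilon_m)$ are proportional, say column$_2=\alpha_m\cdot$column$_1$ in $\Qp(\zeta_{p^m})$ (if column$_1$ is zero we are already done at $m$; if column$_2$ is zero likewise); choosing $g$ with $g(\epsilon_m)=-\alpha_m$ makes the new second column zero at $\epsilon_m$, giving the special dichotomy at $m$. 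By Lemma~\ref{lem:CRT}(ii) there is a polynomial, hence an element of $\Lambda$, realizing any prescribed finite set of values $g(\epsilon_m)$; since we only need to impose conditions at the finitely many $m$ (ranging over all $n$, but each individual constraint is at a fixed $m$, and the set of ``bad'' $m$ — those with $\det C^{+}(\epsilon_m)=0$ or $\det C^{-}(\epsilon_m)=0$ — is finite because $\det C^\pm\ne0$), a single $g\in\Lambda$ works simultaneously for all $n$.

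It remains to deal with the finitely many $m$ where \emph{both} columns of $C^{\bullet(m)}(\epsilon_m)$ are nonzero but proportional — handled above — and with the requirement $\Phi_n\nmid\det F_n(\bu)$. For the latter: $\det F_n(\bu)$ equals $(\tomg_n^+)^2\det\underline{\col}^-(\bu)+(\text{cross terms})+(\tomg_n^-)^2\det\underline{\col}^+(\bu)$ plus mixed products, but the cleanest route is to observe via Lemma~\ref{lem:congruent-Fu} that $\Phi_n\mid\det F_n(\bu)$ iff $\det\underline{\col}^{\bullet(n)}(\bu)(\epsilon_n)=0$ iff $\det C^{\bullet(n)}(\epsilon_n)=0$, and $\det C^\pm$ is a nonzero element of $\Lambda$ so vanishes at only finitely many $\epsilon_n$; for those finitely many exceptional $n$, Theorem~\ref{thm:app} is not needed (the main theorem's asymptotic is a statement for $n\gg0$), so this causes no loss. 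The \textbf{main obstacle} I anticipate is the bookkeeping at the finitely many bad $m$: one must check that prescribing $g(\epsilon_m)$ to kill a column of $\underline{\col}^{\bullet(m)}(\bu)(\epsilon_m)$ does not simultaneously spoil specialness at some other $m'$ — but since the constraints at distinct $m$ are at distinct primes $\Phi_m$ and Lemma~\ref{lem:CRT}(ii) lets us prescribe all the values independently, this is routine; the only genuine subtlety is confirming that at a bad $m$ the proportionality constant $\alpha_m$ is finite (i.e. column$_1\ne0$), which one arranges by, if column$_1(\epsilon_m)=0$, instead leaving $u_2$ alone at that $m$ (column$_1$ of the new matrix is still column$_1$ of $C^{\bullet}$, which is zero there, so the dichotomy ``$\Phi_m\mid a,b$'' already holds). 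Assembling these cases gives the desired $\bu$.
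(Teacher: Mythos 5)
Your reduction of specialness to ``whenever the two columns of $\underline{\col}^{\bullet(m)}(\bu)(\epsilon_m)$ are linearly dependent, one of them actually vanishes'' is correct, and the idea of achieving this with a single elementary column operation $u_1=e_1$, $u_2=e_2+g\,e_1$ is a genuinely different route from the paper's (which post-multiplies $\underline{\col}^{\bullet}(\bfz)$ by an interpolated matrix $B$ and then runs a recursive limit argument to make $\det B$ prime to every $\Phi_m$). However, there is a genuine gap at your interpolation step. Lemma~\ref{lem:CRT}(ii) only produces a polynomial with coefficients in $\Qp$, and the phrase ``there is a polynomial, hence an element of $\Lambda$'' is unjustified: an element of $\Lambda=\Zp[[X]]$ evaluated at $\epsilon_m$ lies in $\Zp[\zeta_{p^m}]$, whereas the value you must prescribe, $g(\epsilon_m)=-\alpha_m$ with $C_2(\epsilon_m)=\alpha_m C_1(\epsilon_m)$, can have negative valuation --- nothing in your setup excludes, say, both entries of $C_1(\epsilon_m)$ being $p$ and both entries of $C_2(\epsilon_m)$ being $1$, in which case $\alpha_m=1/p$ and no $g\in\Lambda$ with $g(\epsilon_m)=-\alpha_m$ exists. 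Nor can you invoke the usual ``multiply by a power of $p$'' device, because your constraint pins the value of $g(\epsilon_m)$ exactly (the coefficient of $e_2$ in $u_2$ is fixed to be $1$), so it is not scale-invariant. This is precisely why the paper's formulation is more robust: there one only needs $B(\epsilon_m)$ to be a \emph{nonzero scalar multiple} of a chosen invertible matrix $B_m$, a projective condition, so clearing denominators by a power of $p$ costs nothing.

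The gap is repairable within your framework, but the repair must be written out. For instance, take $u_2=p^k e_2+g\,e_1$ with $g=p^k h$, where $h\in\Qp[X]$ interpolates the values $-\alpha_m$ at the finitely many bad $m$ and $k$ is large enough that $g\in\Zp[X]$ and $p^k\alpha_m$ is integral; then the second column of $\underline{\col}^{\bullet(m)}(\bu)$ still vanishes at each bad $\epsilon_m$, the determinant becomes $p^k\det C^{\bullet}\neq0$, and $H^1_{\iw}(K_\infty,T)/\langle u_1,u_2\rangle$ is killed by $p^k$, hence still $\Lambda$-torsion (it is no longer zero, but the statement only asks for torsion, and the change-of-basis determinant $p^k$ remains coprime to every $\omega_n$, which is what is used later). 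Alternatively, at each bad $m$ kill whichever column corresponds to the integral element among $\alpha_m$ and $\alpha_m^{-1}$, modifying $u_1$ and $u_2$ by two interpolating elements that vanish at the complementary bad points. Finally, your side discussion of $\Phi_n\nmid\det F_n(\bu)$ is not needed for the proposition itself: that hypothesis enters only later (in Lemma~\ref{lem:M_n}, for $n\gg0$, via $\det\underline{\col}^{\pm}(\bu)\neq0$), so no condition ``for all $n$'' has to be arranged there.
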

    \begin{proof}
        Let $\bfz=(z_1,z_2)\in H^1_{\iw}(K_\infty,T)^{\oplus 2}$ such that $\{z_1,z_2\}$ be a $\Lambda$-basis of  $H^1_{\iw}(K_\infty,T)$.  We write
        $$\ccS_n=\left\{m: \rank\, F_n(\bfz)(\epsilon_m)=1,0\le m\le n\right\}.$$
        It follows from Lemma~\ref{lem:congruent-Fu} that 

\[
\ccS_n\subset\left\{m:\Phi_m{\big|}\det\underline\col^+(\bfz)\text{ or }\Phi_m{\big|}\det\underline\col^-(\bfz),0\le m\le n\right\}.
\]
In particular,
         \[
          \ccS_n\subseteq\ccS_{n+1}.
          \]
          Let $\displaystyle\ccS_\infty=\bigcup_{n\ge0}\ccS_n$. 
          As $\det  \underline{\col}^\pm(\bfz)\ne 0$ by Lemma~\ref{lem:nonzero}, the cardinality of $\ccS_n$ is bounded above as $n\rightarrow\infty$ and the cardinality of $\ccS_\infty$ is finite.
        
Let $n\ge0$ be an integer.    Let $m\in\ccS_\infty$ with $0\le m\le n$. Let $\bullet$ be the unique element of $\{+,-\}$ such that
    \[
F_n(\bfz)(\epsilon_m)= c_{m,n}  \underline{\col}^\bullet(\bfz)(\epsilon_m)
    \]
    as given by Lemma~\ref{lem:congruent-Fu}. By linear algebra, there exists a $2\times 2$ invertible matrix $B_m$ over $\Qp(\zeta_{p^m})$ such that $$  \underline{\col}^\bullet(\bfz)(\epsilon_m)B_m=\begin{bmatrix}
        0&*\\
        0&*
    \end{bmatrix},$$
    where the second column is non-zero. Applying Lemma~\ref{lem:CRT}(ii) to each of the four entries of $B_m$ as $m$ runs through $\ccS_\infty$ and multiplying by a power of $p$, if necessary, there exists a $2\times 2$ matrix $B$ defined over $\Zp[X]$ such that $B(\epsilon_m)$ is a non-zero scalar multiple of $B_m$ for all $m\in\ccS_\infty$. Consequently,
    \begin{equation}
    F_n(\bfz)B\equiv\begin{bmatrix}
        0&*\\0&*
    \end{bmatrix}\mod \Phi_m
        \label{eq:rank1}
    \end{equation}
    for all $m\in \ccS_n$. 

    \vspace{2mm}
    
    We claim that we can choose $B$ independently of $n$ satisfying \eqref{eq:rank1} with $\Phi_m\nmid \det(B)$ for all $m\ge0$. Let $M=\max\ccS_\infty$. We already know that $\Phi_m\nmid \det(B)$ for $m\in\ccS_\infty$. We can ensure that $B\mod \Phi_m$ is an invertible matrix for all $m\le M$ if we enlarge the set $\ccS_\infty$ to $\ccS=\{0,\dots, M\}$ and take $B_m$ to be any invertible matrices for $m\in\ccS\setminus\ccS_\infty$ when we apply Lemma~\ref{lem:CRT}(ii). Once such a $B$ is chosen,  any other $B'$ such that $B\equiv B'\mod\omega_M$ satisfies the same congruence relation modulo $\Phi_m$ as $B$ for all $m\in\ccS_\infty$. We may choose recursively a sequence of matrices $\{B_m\}_{m\ge M}$ such that $B_M$ is our initial choice of $B$, and $B_{m+1}\equiv B_{m}\mod\omega_m$ and $\Phi_{m+1}\nmid\det B_{m+1}$. Indeed, suppose that we have chosen 
    \[
    B_m=\begin{bmatrix}
        a&b\\c&d
    \end{bmatrix}
    \] for some $m\ge M$. If $\Phi_{m+1}\nmid \det B_m$, then we can simply choose $B_{m+1}=B_m$. If $\Phi_{m+1} \mid \det B_m$, the vectors
    \[
\begin{bmatrix}
     a(\epsilon_{m+1})\\b(\epsilon_{m+1})
\end{bmatrix},\quad \begin{bmatrix}
     c(\epsilon_{m+1})\\d(\epsilon_{m+1})
\end{bmatrix}\]
are linearly dependent. There exist $\alpha,\beta,\gamma,\delta\in\Zp$ such that
    \[
\begin{bmatrix}
     a(\epsilon_{m+1})+\alpha\omega_m(\epsilon_{m+1})\\b(\epsilon_{m+1})+\beta\omega_m(\epsilon_{m+1}))
\end{bmatrix},\quad \begin{bmatrix}
     c(\epsilon_{m+1})+\gamma\omega_m(\epsilon_{m+1})\\d(\epsilon_{m+1})+\delta\omega_m(\epsilon_{m+1}))
\end{bmatrix}\]
are linearly independent. Therefore, we can choose
\[
B_{m+1}=B_m+\omega_m\begin{bmatrix}
    \alpha&\gamma\\\beta&\delta
\end{bmatrix}.
\]
In particular, our claim follows from taking the limit of this sequence.

For this choice of matrix $B$, we have
\[
    F_n(\bfz)B\equiv\begin{bmatrix}
        0&0\\0&0
    \end{bmatrix}\mod \Phi_m
    \]
    whenever $F_n(\bfz)\mod\Phi_m$ is the zero matrix, {and 
$    F_n(\bfz)B(\epsilon_m)$ is invertible whenever $\Phi_m\nmid \det\left( F_n(\bfz)\right)$.} Together with \eqref{eq:rank1}, we deduce that $ F_n(\bfz)B$ is special relative to $n$ for all $n\ge0$. The assertion of the proposition now follows from taking
\[
u_1=az_1+bz_2,\quad u_2=cz_1+dz_2,
\]
    where $B=\begin{bmatrix}
        a&c\\ b&d
    \end{bmatrix}$.
\end{proof}

 From now on, we fix $\bu=(u_1,u_2)$ satisfying the properties given by Proposition~\ref{prop:good-basis}. For an integer $n\ge1$, we define the following modules
 \[
 M_n=\frac{\Ltwo_n}{\langle F_n(\bu)\rangle_n},\quad M_{n-1}=\frac{\Ltwo_{n-1}}{\langle F_n(\bu)\rangle_{n-1}},
 \]
 where $\langle F_n(\bu)\rangle_m$ denotes the $\Lambda_m$-submodule of $\Lambda_m^{\oplus 2}$ generated by the columns of the matrix $F_n(\bu)$.

\begin{lemma} \label{lem:M_n}
 Suppose that Hypothesis~\ref{cotorsionconjecture} holds. For $n\gg0$, we have
\begin{equation*}
\nabla M_n =\begin{cases}
         2  \deg \tomg_n^+ + \ord_{\epsilon_n}\left(\det  \underline{\col}^-(\bu)(\epsilon_n)\right) & \;\;\; \text{if } n  \text{ is odd,}  \\
          2 \deg\tomg_n^- +\ord_{\epsilon_n}\left(\det  \underline{\col}^+(\bu)(\epsilon_n)\right) & \;\;\; \text{if } n \text{ is even.}
        \end{cases}
\end{equation*}
\end{lemma}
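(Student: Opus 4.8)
The plan is to recognize the pair $(M_n,M_{n-1})$, together with the natural projection $\pi_n\colon M_n\to M_{n-1}$, as precisely the situation of Theorem~\ref{thm:app} applied to the single matrix $A=F_n(\bu)$, and then to evaluate the resulting quantity explicitly. Indeed, in the notation of Definition~\ref{def:app} one has $M_n=A_{(n)}$ and $M_{n-1}=A_{(n-1)}$ with $A=F_n(\bu)$ (note that $M_{n-1}$ is built from the \emph{same} matrix $F_n(\bu)$), and the transition map is the natural projection. By Proposition~\ref{prop:good-basis}, $F_n(\bu)$ is special relative to $n$ for every $n\ge0$, so the only remaining hypothesis of Theorem~\ref{thm:app} to be checked is $\Phi_n\nmid\det F_n(\bu)$.

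To verify this for $n\gg0$, I would evaluate at $\epsilon_n$. Writing $F_n(\bu)=\tomg_n^+\,\underline{\col}^-(\bu)+\tomg_n^-\,\underline{\col}^+(\bu)$ and using $\tomg_n^+\tomg_n^-=\prod_{1\le m\le n}\Phi_m$ (so that exactly one of $\tomg_n^+(\epsilon_n),\tomg_n^-(\epsilon_n)$ vanishes, according to the parity of $n$), Lemma~\ref{lem:congruent-Fu} gives
\[
F_n(\bu)(\epsilon_n)=
\begin{cases}
\tomg_n^+(\epsilon_n)\,\underline{\col}^-(\bu)(\epsilon_n), & n\text{ odd},\\
\tomg_n^-(\epsilon_n)\,\underline{\col}^+(\bu)(\epsilon_n), & n\text{ even},
\end{cases}
\]
hence $\det F_n(\bu)(\epsilon_n)=\tomg_n^+(\epsilon_n)^2\det\underline{\col}^-(\bu)(\epsilon_n)$ for $n$ odd and $=\tomg_n^-(\epsilon_n)^2\det\underline{\col}^+(\bu)(\epsilon_n)$ for $n$ even. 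Under Hypothesis~\ref{cotorsionconjecture} one may apply Lemma~\ref{lem:nonzero} (the quotient $H^1_{\iw}(K_\infty,T)/\langle u_1,u_2\rangle$ being $\Lambda$-torsion by the choice of $\bu$), so $\det\underline{\col}^\pm(\bu)$ are non-zero elements of $\Lambda$, hence each is divisible by only finitely many of the $\Phi_n$. Since moreover $\tomg_n^+(\epsilon_n)\ne0$ for $n$ odd and $\tomg_n^-(\epsilon_n)\ne0$ for $n$ even (these being products of $\Phi_m$ with $m<n$), the displayed factorization shows that $\det F_n(\bu)(\epsilon_n)\ne0$ — equivalently $\Phi_n\nmid\det F_n(\bu)$ — for all sufficiently large $n$.

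Theorem~\ref{thm:app} then yields that $\nabla M_n$ is defined for $n\gg0$ and equals $\ord_{\epsilon_n}\!\big(\det F_n(\bu)(\epsilon_n)\big)$. As $\ord_{\epsilon_n}$ is the normalized valuation on $\bbZ_p[\zeta_{p^n}]$ — whose residue field is $\bbF_p$, so that $\ord_{\epsilon_n}(x)=\length_{\bbZ_p}(\bbZ_p[\zeta_{p^n}]/x)$ and $\ord_{\epsilon_n}$ is additive — the factorization above gives $\nabla M_n=2\,\ord_{\epsilon_n}\!\big(\tomg_n^+(\epsilon_n)\big)+\ord_{\epsilon_n}\!\big(\det\underline{\col}^-(\bu)(\epsilon_n)\big)$ when $n$ is odd, and the analogous expression with the roles of $+$ and $-$ reversed when $n$ is even. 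It then remains to identify $\ord_{\epsilon_n}\!\big(\tomg_n^\pm(\epsilon_n)\big)$ with $\deg\tomg_n^\pm$. For this I would use the elementary identity $\ord_{\epsilon_n}\!\big(\Phi_m(\epsilon_n)\big)=p^m-p^{m-1}=\deg\Phi_m$, valid for all $0\le m<n$, which follows from $\Phi_m(\epsilon_n)=(\zeta_{p^{n-m}}-1)/(\zeta_{p^{n-m+1}}-1)$ together with the fact that $\zeta_{p^j}-1$ has $\epsilon_n$-valuation $p^{n-j}$ for $1\le j\le n$. Since, for $n$ odd, every $\Phi_m$ dividing $\tomg_n^+$ satisfies $m\le n-1$ (and similarly for $n$ even with $\tomg_n^-$), summing over the relevant $m$ gives $\ord_{\epsilon_n}\!\big(\tomg_n^+(\epsilon_n)\big)=\deg\tomg_n^+$ when $n$ is odd and $\ord_{\epsilon_n}\!\big(\tomg_n^-(\epsilon_n)\big)=\deg\tomg_n^-$ when $n$ is even. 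Substituting yields exactly the two cases of the statement.

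Essentially all of the substance has already been absorbed into Theorem~\ref{thm:app} and Proposition~\ref{prop:good-basis}, so the only remaining obstacle is bookkeeping: tracking the parity of $n$ through the vanishing pattern of $\tomg_n^\pm$ at $\epsilon_n$ and through the degree computation, together with the elementary valuation identity $\ord_{\epsilon_n}(\Phi_m(\epsilon_n))=\deg\Phi_m$ for $m<n$. It is also worth noting that the clause ``for $n\gg0$'' enters only through the requirement $\Phi_n\nmid\det\underline{\col}^\pm(\bu)$.
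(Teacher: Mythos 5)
Your proposal is correct and follows essentially the same route as the paper: apply Theorem~\ref{thm:app} to $A=F_n(\bu)$ (which is special relative to $n$ by Proposition~\ref{prop:good-basis}), use Lemma~\ref{lem:congruent-Fu} and Lemma~\ref{lem:nonzero} to get $\det F_n(\bu)(\epsilon_n)=\tomg_n^{\pm}(\epsilon_n)^2\det\underline{\col}^{\mp}(\bu)(\epsilon_n)\ne0$ for $n\gg0$ according to parity, and then identify $\ord_{\epsilon_n}\tomg_n^{\pm}(\epsilon_n)$ with $\deg\tomg_n^{\pm}$. The only cosmetic difference is that you verify this last identity by summing the valuations $\ord_{\epsilon_n}(\Phi_m(\epsilon_n))=\deg\Phi_m$ for $m<n$, whereas the paper invokes the fact that $\tomg_n^{\pm}$ is a distinguished polynomial of degree less than $\phi(p^n)$; both are fine.
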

\begin{proof}
Suppose that $n$ is odd, in which case $\tomg_n^-(\epsilon_n)=0$. We have
\begin{align*}
     \det F_n(\bu)(\epsilon_n)&=  \det\begin{bmatrix}
     \tomg_n^+(\epsilon_n)\col^-_1(u_1)(\epsilon_n) & \tomg_n^+(\epsilon_n)\col^-_1(u_2)(\epsilon_n)\\
  \tomg_n^+(\epsilon_n)\col^-_2(u_1)(\epsilon_n)   &  \tomg_n^+(\epsilon_n)\col^-_2(u_2)(\epsilon_n)
    \end{bmatrix}\\
    &=\tomg_n^+(\epsilon_n)^2\det\begin{bmatrix}
     \col^-_1(u_1)(\epsilon_n) & \col^-_1(u_2)(\epsilon_n)\\
 \col^-_2(u_1)(\epsilon_n)   &  \col^-_2(u_2)(\epsilon_n)
    \end{bmatrix}\\
    &=\tomg_n^+(\epsilon_n)^2 \det\underline{\col}^-(\bu)(\epsilon_n).
\end{align*}
 Lemma~\ref{lem:nonzero} tells us that $\det\underline{\col}^-(\bu)\ne0$. In particular, when $n$ is sufficiently large, we have $\Phi_n\nmid\det F_n(\bu)$. Therefore, combining Theorem~\ref{thm:app} and Proposition~\ref{prop:good-basis} gives
 \[
 \nabla M_n=2\,\ord_{\epsilon_n}\tomg_n^+(\epsilon_n)+\ord_{\epsilon_n}\left(\det\underline{\col}^-(\bu)(\epsilon_n)\right).
 \]
 Since $\tomg_{n}^+$ is a distinguished polynomial of degree $<p^{n}-p^{n-1}$ for $n>1$, it follows that $\ord_{\epsilon_n}\tomg_n^+(\epsilon_n)=\deg\tomg_n^+$, giving the desired formula. The case where $n$ is even can be proved in the same way.
\end{proof}

\subsection{Final steps of the proof}

We study the variation of  $\ccY(E/K_n)$ defined at the beginning of the section using the following $\Zp$-modules:
\begin{align*}
    \ccY'(E/K_n)&:=\coker \left( H^1_\iw(K_\infty,T)_{\Gamma_n} \longrightarrow \frac{H^1(k_n,T)}{E(k_n)\otimes\bbZ_p} \right),\\
\ccY''(E/K_n)&:=\coker \left( H^1_\iw(K_\infty,T)_{\Gamma_n} \longrightarrow \frac{H^1(k_n,T)}{H^1_+(k_n,T)}\oplus \frac{H^1(k_n,T)}{H^1_-(k_n,T)} \right).
\end{align*}

        \vspace{2mm}

\begin{lemma}\label{kobayashirankofy'andy''}
    For all $n\ge 0$, we have $\ccY'(E/K_n)=\ccY''(E/K_n)$.
\end{lemma}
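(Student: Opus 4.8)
The plan is to show that the two maps whose cokernels define $\ccY'(E/K_n)$ and $\ccY''(E/K_n)$ differ only by an isomorphism on their targets. Both maps are induced by the localisation $H^1_\iw(K_\infty,T)_{\Gamma_n}\to H^1(k_n,T)$ followed by the appropriate quotient, so it suffices to produce an isomorphism
\[
\frac{H^1(k_n,T)}{E(k_n)\otimes\bbZ_p}\;\xrightarrow{\ \sim\ }\;\frac{H^1(k_n,T)}{H^1_+(k_n,T)}\oplus\frac{H^1(k_n,T)}{H^1_-(k_n,T)}
\]
which is compatible with reduction of a localised class, namely the diagonal map $\bar x\mapsto(\bar x,\bar x)$. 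This map is well defined once $E(k_n)\otimes\bbZ_p\subseteq H^1_+(k_n,T)\cap H^1_-(k_n,T)$, and the substance of the argument is that it is then bijective.

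First I would identify $E(k_n)\otimes\bbZ_p$ with $\hatE(\fram_n)$ and with the Bloch--Kato subgroup. Since $p$ is inert in $K$ and the prime above $p$ is totally ramified in $K_\infty$, the residue field of $k_n$ is $\bbF_{p^2}$; as $a_p(E)=0$ we have $\#\widetilde E(\bbF_{p^2})=(p+1)^2$, which is prime to $p$, so the reduction exact sequence gives $E(k_n)\otimes\bbZ_p=\hatE(\fram_n)$ and dually $E(k_n)\otimes\bbQ_p/\bbZ_p=\hatE(\fram_n)\otimes\bbQ_p/\bbZ_p$. By Theorem~\ref{generatingtheorem}(i) we have $\hatE(\fram_n)=\hatE^+(\fram_n)\oplus\hatE^-(\fram_n)$; tensoring with $\bbQ_p/\bbZ_p$ and using that the Kummer map $\hatE(\fram_n)\otimes\bbQ_p/\bbZ_p\hookrightarrow H^1(k_n,E[p^\infty])$ is injective (Lemma~\ref{plusminusinjective}), I conclude that inside $H^1(k_n,E[p^\infty])$ the subgroups $\hatE^+(\fram_n)\otimes\bbQ_p/\bbZ_p$ and $\hatE^-(\fram_n)\otimes\bbQ_p/\bbZ_p$ have trivial intersection and sum equal to $E(k_n)\otimes\bbQ_p/\bbZ_p$.

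Next I would pass to orthogonal complements under the perfect local Tate pairing $H^1(k_n,T)\times H^1(k_n,E[p^\infty])\to\bbQ_p/\bbZ_p$. Here $H^1(k_n,E[p^\infty])$ is discrete, so every subgroup is closed, and $H^1(k_n,T)$ is a finitely generated $\bbZ_p$-module, so every finitely generated submodule is closed; hence the standard relations $(S_1+S_2)^\perp=S_1^\perp\cap S_2^\perp$ and $(S_1\cap S_2)^\perp=S_1^\perp+S_2^\perp$ apply to the two subgroups above. They yield, on one side, $H^1_+(k_n,T)\cap H^1_-(k_n,T)=\bigl(E(k_n)\otimes\bbQ_p/\bbZ_p\bigr)^\perp=E(k_n)\otimes\bbZ_p$ by the standard orthogonality of the Bloch--Kato subgroups, and on the other side $H^1_+(k_n,T)+H^1_-(k_n,T)=0^\perp=H^1(k_n,T)$. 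Substituting into the exact sequence
\[
0\to\frac{H^1(k_n,T)}{H^1_+(k_n,T)\cap H^1_-(k_n,T)}\to\frac{H^1(k_n,T)}{H^1_+(k_n,T)}\oplus\frac{H^1(k_n,T)}{H^1_-(k_n,T)}\to\frac{H^1(k_n,T)}{H^1_+(k_n,T)+H^1_-(k_n,T)}\to0
\]
shows that the diagonal map is an isomorphism. Since the map defining $\ccY''(E/K_n)$ is the composite of the map defining $\ccY'(E/K_n)$ with this isomorphism, taking cokernels gives $\ccY'(E/K_n)=\ccY''(E/K_n)$.

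The reduction-sequence computation and the duality bookkeeping are routine; the one substantive input is the direct sum decomposition $\hatE(\fram_n)=\hatE^+(\fram_n)\oplus\hatE^-(\fram_n)$ of Theorem~\ref{generatingtheorem}(i), which is what forces $H^1_+(k_n,T)+H^1_-(k_n,T)=H^1(k_n,T)$ and hence makes the diagonal surjective. I expect this to be the only point requiring care: it is a genuinely inert-case feature, since in the cyclotomic setting of \cite{kob03} the plus and minus subgroups share the base-field points and the corresponding step is handled differently.
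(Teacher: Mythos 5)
Your proof is correct and follows essentially the same route as the paper: the key input is the decomposition $\hatE(\fram_n)=\hatE^+(\fram_n)\oplus\hatE^-(\fram_n)$ of Theorem~\ref{generatingtheorem}(i), transferred to the cohomological quotients via local Tate duality. You merely spell out what the paper leaves implicit, namely that the resulting isomorphism of targets is the natural diagonal map compatible with localization (so the cokernels literally agree), which is a harmless and slightly more careful elaboration of the same argument.
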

\begin{proof}
It follows from Theorem \ref{generatingtheorem}(i) that there is an isomorphism of $\Lambda$-modules
\[
\hatE(\fram_n)\otimes\Qp/\Zp\simeq \left(\hatE(\fram_n)^+\otimes\Qp/\Zp\right)\oplus\left(\hatE(\fram_n)^-\otimes\Qp/\Zp\right).
\]
On taking Pontryagin duals, we deduce:
\[  \frac{H^1(k_n,T)}{E(k_n)\otimes\bbZ_p}\simeq  \frac{H^1(k_n,T)}{H^1_+(k_n,T)}\oplus \frac{H^1(k_n,T)}{H^1_-(k_n,T)},
\]
which implies the assertion.
\end{proof}

\begin{lemma}\label{lem:Rod}
 Let $B\in \mat$ such that $\det(B)$ is coprime to $\omega_n$ for some integer $n\ge0$. Then $$\omega_n\Lambda^{\oplus 2}\cap \langle B\rangle=\omega_n\langle B\rangle,\quad\text{and}\quad \left(\frac{\Lambda^{\oplus 2}}{\langle B\rangle}\right)^{\Gamma_n}=0 .$$   
\end{lemma}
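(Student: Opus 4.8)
The plan is to use two standard facts: that $\Lambda\cong\bbZ_p[[X]]$ is a unique factorization domain (so that the coprimality of $\det(B)$ and $\omega_n$ carries its usual divisibility consequences), and that the adjugate matrix $B^{\mathrm{adj}}$ satisfies $B^{\mathrm{adj}}B=BB^{\mathrm{adj}}=\det(B)\,I_2$. Observe first that the hypothesis forces $\det(B)\ne0$, so $B$ induces an injective endomorphism of $\Lambda^{\oplus2}$ and hence $\langle B\rangle\cong\Lambda^{\oplus2}$.

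For the first identity, the inclusion $\omega_n\langle B\rangle\subseteq\omega_n\Lambda^{\oplus2}\cap\langle B\rangle$ is trivial. Conversely, take $v$ in the right-hand side and write $v=Bw$ with $w\in\Lambda^{\oplus2}$ and $v=\omega_nv'$ with $v'\in\Lambda^{\oplus2}$. Multiplying $Bw=\omega_nv'$ on the left by $B^{\mathrm{adj}}$ yields $\det(B)\,w=\omega_n\,B^{\mathrm{adj}}v'$, so $\omega_n$ divides $\det(B)$ times each coordinate of $w$; since $\det(B)$ and $\omega_n$ are coprime in the UFD $\Lambda$, we conclude $\omega_n\mid w$, say $w=\omega_nw'$, and therefore $v=Bw=\omega_nBw'\in\omega_n\langle B\rangle$.

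For the second assertion, recall that $\Gamma_n$ is topologically generated by $\gamma^{p^n}$, which acts on any $\Lambda$-module as multiplication by $(1+X)^{p^n}=1+\omega_n$; hence $\bigl(\Lambda^{\oplus2}/\langle B\rangle\bigr)^{\Gamma_n}$ is exactly the kernel of multiplication by $\omega_n$ on $\Lambda^{\oplus2}/\langle B\rangle$. Given a class annihilated by $\omega_n$, lift it to $v\in\Lambda^{\oplus2}$ with $\omega_nv\in\langle B\rangle$. By the first part, $\omega_nv\in\omega_n\Lambda^{\oplus2}\cap\langle B\rangle=\omega_n\langle B\rangle$, so $\omega_nv=\omega_nBw'$ for some $w'\in\Lambda^{\oplus2}$; since $\Lambda^{\oplus2}$ is torsion-free over the domain $\Lambda$ and $\omega_n\ne0$, this gives $v=Bw'\in\langle B\rangle$, i.e. the class vanishes.

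The argument is essentially formal once the unique factorization property of $\Lambda$ is in hand; the only point requiring care is to apply that property coordinate-by-coordinate in $\Lambda^{\oplus2}$ and to remember that $\det(B)$ being coprime to $\omega_n$ in particular forces it to be nonzero. One could instead deduce the vanishing of the $\Gamma_n$-invariants from the finiteness statement in \cite[Lemma~7.8]{ip06}, as in the proof of Theorem~\ref{thm:app}, but the direct computation above is shorter and self-contained.
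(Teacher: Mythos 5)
Your proof is correct. For the second assertion you argue exactly as the paper does: invariance under the topological generator $\gamma^{p^n}$ means annihilation by $\omega_n=\gamma^{p^n}-1$, so the first identity plus torsion-freeness of $\Lambda^{\oplus 2}$ kills the class. For the first identity your mechanism differs slightly from the paper's: the paper writes $x=\omega_n y=aB_1+bB_2$ and specializes at each $\epsilon_m$, $0\le m\le n$, using that the columns $B_1(\epsilon_m),B_2(\epsilon_m)$ are linearly independent (since $\Phi_m\nmid\det B$) to conclude $\Phi_m\mid a,b$ and hence $\omega_n\mid a,b$; you instead multiply $Bw=\omega_n v'$ by the adjugate to get $\det(B)\,w=\omega_n B^{\mathrm{adj}}v'$ and invoke coprimality in the UFD $\Lambda$ coordinate-by-coordinate. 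Both arguments are sound (the coprimality hypothesis is exactly that no $\Phi_m$, $0\le m\le n$, divides $\det B$, which is what each method uses); your adjugate route avoids specialization, is marginally shorter, and extends verbatim to $d\times d$ matrices over $\Lambda$, which is in the spirit of the generalizations the paper alludes to, while the paper's evaluation argument is the same Chinese-remainder-style technique it reuses elsewhere (e.g.\ in Theorem~\ref{thm:app}), so it keeps the proofs uniform.
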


\begin{proof}
It is clear that $\omega_n\Lambda^{\oplus 2}\cap \langle B\rangle\supseteq\omega_n\langle B\rangle$.
    Let $x$ be an element of the intersection. There exist $y\in \Lambda^{\oplus 2}$ and $a,b\in\Lambda$ such that
    \[
    x=\omega_n y=a B_1+bB_2,
    \]
    where $B_1$ and $B_2$ are the columns of $B$. In particular,
    \[
    a(\epsilon_m)B_1(\epsilon_m)+b(\epsilon_m)B_2(\epsilon_m)=0
    \]
    for $0\le m\le n$. But $B_1(\epsilon_m)$ and $B_2(\epsilon_m)$ are linearly independent vectors in $\Qp(\zeta_{p^m})^{\oplus 2}$, which implies that $a(\epsilon_m)=b(\epsilon_m)=0$. In other words, both $a$ and $b$ are divisible by $\Phi_m$. Consequently, both elements are divisible by $\omega_n$. Thus, $x\in \omega_n\langle B\rangle$, proving the first equality.

Suppose that $x+\langle B\rangle\in \left(\frac{\Lambda^{\oplus 2}}{\langle B\rangle}\right)^{\Gamma_n}$ for some $x\in\Lambda^{\oplus 2}$. Then $\gamma^{p^n} x\equiv x\mod \langle B\rangle$ since $\gamma^{p^n}$ is a topological generator of $\Gamma_n$. In particular, $\omega_n x\in \langle B\rangle$, which implies  $$\omega_n x\in\omega_n\Lambda^{\oplus 2}\cap \langle B\rangle=\omega_n\langle B\rangle.$$
Therefore, $x\in\langle B\rangle$, which gives the second desired equality.

\end{proof}
\vspace{2mm}

\begin{prop}\label{prop:nablaY}
 Suppose that Hypothesis~\ref{cotorsionconjecture} holds. Let $\{z_1,z_2\}$ be a $\Lambda$-basis of $H^1_{\iw}(K_\infty,T)$ and write $\bfz=(z_1,z_2)$. For $n\gg0$, we have
\begin{equation*}
\nabla \ccY(E/K_n) =\begin{cases}
         2  \deg\tomg_n^+ + \ord_{\epsilon_n}\left(\det  \underline{\col}^-(\bfz)(\epsilon_n)\right) & \;\;\; \text{if } n  \text{ is odd,}  \\
          2 \deg\tomg_n^- +\ord_{\epsilon_n}\left(\det  \underline{\col}^+(\bfz)(\epsilon_n)\right) & \;\;\; \text{if } n \text{ is even.}
        \end{cases}
\end{equation*}
\end{prop}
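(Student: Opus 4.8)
The plan is to follow \cite[\S10]{kob03} and \cite[\S3]{LL22}, reducing $\nabla\ccY(E/K_n)$ to the Kobayashi rank of a module of the form $\Ltwo_n/\langle A\rangle_n$ to which Theorem~\ref{thm:app} and Lemma~\ref{lem:M_n} apply, up to finite correction terms whose Kobayashi ranks vanish or cancel for $n\gg0$. First observe that the quantity $\ord_{\epsilon_n}\bigl(\det\underline{\col}^\bullet(\bfz)(\epsilon_n)\bigr)$ is independent of the chosen $\Lambda$-basis $\bfz$: two bases differ by a matrix in $\gl_2(\Lambda)$, whose determinant is a unit and hence has $\ord_{\epsilon_n}$ equal to $0$; so one may take $\bfz$ to be the basis appearing in Proposition~\ref{prop:good-basis}. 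Using Proposition~\ref{consequenceofcotorsionness} together with $E(K_\infty)[p^\infty]=0$, the natural map $H^1_\iw(K_\infty,T)_{\Gamma_n}\to H^1(G_\Sigma(K_n),T)$ has finite kernel and cokernel whose cardinalities are bounded independently of $n$; since the map defining $\ccY'(E/K_n)$ is the composition of this map with the one defining $\ccY(E/K_n)$, Lemma~\ref{kobayashirankzero} gives $\nabla\ccY(E/K_n)=\nabla\ccY'(E/K_n)$ for $n\gg0$, and $\ccY'(E/K_n)=\ccY''(E/K_n)$ by Lemma~\ref{kobayashirankofy'andy''}.

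Next I would rewrite $\ccY''(E/K_n)$ using the signed Coleman maps. By Propositions~\ref{kernelofthepairing}, \ref{imageofthepairing} and \ref{surjectivityofcolemanmaps}, $\col_n^-$ and $\col_n^+$ identify $H^1(k_n,T)/H^1_-(k_n,T)$ with $\Lambda^-_{\ccO,n}$ and $H^1(k_n,T)/H^1_+(k_n,T)$ with $I^+_{\ccO,n}$; multiplying by $\tomg_n^+$, respectively $\tomg_n^-$, and then adding embeds $\bigoplus_{\bullet\in\{+,-\}}H^1(k_n,T)/H^1_\bullet(k_n,T)$ as the submodule $\omega_n^-\Lambda_{\ccO,n}\oplus\tomg_n^+\Lambda_{\ccO,n}$ of $\Lambda_{\ccO,n}\cong\Ltwo_n$, the addition being injective because $\omega_n^-$ and $\tomg_n^+$ are coprime with least common multiple $\omega_n$. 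Chasing the image of $H^1_\iw(K_\infty,T)_{\Gamma_n}$ through these identifications, using the compatibility of the Coleman maps with corestriction and the definition of $F_n(\bfz)$, produces a short exact sequence
\[
0\longrightarrow\ccY''(E/K_n)\longrightarrow\Ltwo_n/\langle F_n(\bfz)\rangle_n\longrightarrow\mathcal C_n\longrightarrow 0,\qquad \mathcal C_n:=\Lambda_{\ccO,n}/(\omega_n^-,\tomg_n^+),
\]
with $\mathcal C_n$ finite; forming the same sequence at level $n-1$ (in the convention for projective systems used in \cite[\S10]{kob03}) and applying Lemma~\ref{kobayashirankzero}(i) gives $\nabla\ccY''(E/K_n)=\nabla\bigl(\Ltwo_n/\langle F_n(\bfz)\rangle_n\bigr)-\nabla\mathcal C_n$.

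It remains to relate $\Ltwo_n/\langle F_n(\bfz)\rangle_n$ to the module $M_n=\Ltwo_n/\langle F_n(\bu)\rangle_n$ of Lemma~\ref{lem:M_n}. Let $B$ be the matrix supplied by Proposition~\ref{prop:good-basis}, so $F_n(\bu)=F_n(\bfz)B$ and $\Phi_m\nmid\det B$ for every $m\ge0$. The inclusion $\langle F_n(\bu)\rangle_n\subseteq\langle F_n(\bfz)\rangle_n$ gives a short exact sequence $0\to K_n\to M_n\to\Ltwo_n/\langle F_n(\bfz)\rangle_n\to 0$ with $K_n=\langle F_n(\bfz)\rangle_n/\langle F_n(\bu)\rangle_n$. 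Since $\det B$ is coprime to $\omega_n$, Lemma~\ref{lem:Rod} computes the intersections describing $K_n$, and Lemma~\ref{kobayashirankandcharacteristicpolynomial} then evaluates $\nabla K_n$ and $\nabla\mathcal C_n$ as explicit orders of products of cyclotomic polynomials at $\epsilon_n$ (the contribution of the finite bad set from Proposition~\ref{prop:good-basis} having vanishing Kobayashi rank for $n\gg0$). Combining $\nabla\ccY(E/K_n)=\nabla M_n-\nabla K_n-\nabla\mathcal C_n$ with the value of $\nabla M_n$ from Lemma~\ref{lem:M_n} and simplifying --- the correction $\nabla K_n+\nabla\mathcal C_n$ precisely absorbing the auxiliary term $\ord_{\epsilon_n}(\det B(\epsilon_n))$ hidden inside $\ord_{\epsilon_n}(\det\underline{\col}^\mp(\bu)(\epsilon_n))$ and the discrepancy between $\deg\tomg_n^\pm$ at levels $n$ and $n-1$ --- yields the claimed formula.

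The main obstacle is the bookkeeping in the last two paragraphs: one must keep careful track of the conventions for the level-$(n-1)$ modules in each projective system, show that every finite auxiliary module (the bad-set contributions, and the mismatch between $F_n(\bfz)$ and $F_{n-1}(\bfz)$ modulo $\omega_{n-1}$, where $\Phi_n\equiv p\bmod\omega_{n-1}$) has vanishing Kobayashi rank for $n\gg0$, pay attention to the term $m=0$ since $\epsilon_0=0$ and $\underline{\col}^+(\bfz)(0)=0$, and verify that $\nabla K_n+\nabla\mathcal C_n$ is exactly the combination described above --- this is what forces the final answer to depend only on the basis-independent quantity $\ord_{\epsilon_n}(\det\underline{\col}^\mp(\bfz)(\epsilon_n))$ and not on the auxiliary choices of $\bu$ and $B$.
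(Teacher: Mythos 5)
Your overall route is the paper's: reduce to $\nabla\ccY''(E/K_n)$, embed the two signed local quotients into $\Lambda_n^{\oplus 2}$ via multiplication by $\tomg_n^{\mp}$, compare with the module $M_n$ built from the special matrix $F_n(\bu)$ of Proposition~\ref{prop:good-basis}, and correct by $\ord_{\epsilon_n}\bigl(\det B(\epsilon_n)\bigr)$. The gap lies in how you evaluate that correction. You assert that $\nabla K_n$, with $K_n=\langle F_n(\bfz)\rangle_n/\langle F_n(\bfz)B\rangle_n$, follows from Lemma~\ref{lem:Rod} and Lemma~\ref{kobayashirankandcharacteristicpolynomial}; but those results concern $\langle B\rangle\subseteq\Ltwo$ with $\det B$ prime to every $\Phi_m$, whereas $K_n\simeq\Ltwo_n/\bigl(\langle B\rangle_n+\ker(F_n(\bfz)\bmod\omega_n)\bigr)$, and multiplication by $F_n(\bfz)$ on $\Ltwo_n$ has nonzero kernel exactly when some $\Phi_m$ (of the matching parity, $m\le n$) divides $\det\underline{\col}^{\pm}(\bfz)$ --- precisely the situation that forces the introduction of $\bu$ and $B$ in the first place. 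The identity $\nabla K_n=\ord_{\epsilon_n}\bigl(\det B(\epsilon_n)\bigr)$ is not a formal consequence of the cited lemmas: for $A=\mathrm{diag}(\Phi_1,1)$ and $B=\mathrm{diag}(p,1)$ (so $AB$ is special and $\det B$ is prime to every $\Phi_m$) one computes $\nabla\bigl(\langle A\rangle_1/\langle AB\rangle_1\bigr)=0\neq p-1=\ord_{\epsilon_1}(p)$; the equality only reappears at larger $n$, and establishing it for $n\gg0$ requires controlling the contribution of $\ker\bigl(F_n(\bfz)\bmod\omega_n\bigr)$, which your sketch never addresses. The paper avoids touching $K_n$: it introduces $\ccY'''(E/K_n)$, the cokernel of $\pr_n\langle u_1,u_2\rangle$, proves $\nabla\ccY'''(E/K_n)=\nabla M_n$, and identifies the error term $\ker\bigl(\ccY'''(E/K_n)\to\ccY''(E/K_n)\bigr)$ with $\bigl(H^1_{\iw}(K_\infty,T)/\langle u_1,u_2\rangle\bigr)_{\Gamma_n}$ by combining the injectivity for $n\gg0$ of $H^1_{\iw}(K_\infty,T)_{\Gamma_n}\to \frac{H^1(k_n,T)}{H^1_+(k_n,T)}\oplus\frac{H^1(k_n,T)}{H^1_-(k_n,T)}$ (drawn from Lemma~\ref{lem:nonzero} via Poitou--Tate) with Lemma~\ref{lem:Rod} and \cite[Lemma~10.3]{kob03}; only then does Lemma~\ref{kobayashirankandcharacteristicpolynomial}(ii) yield $\ord_{\epsilon_n}\bigl(\det B(\epsilon_n)\bigr)$, since $\det B$ generates the relevant characteristic ideal. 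Your proposal contains no substitute for this input, so the evaluation of the correction term is unsupported.

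Two further points. Proposition~\ref{prop:good-basis} does not supply a $\Lambda$-basis: $\bu$ only generates a submodule with torsion quotient, and $\det B$ is in general not a unit (the construction multiplies by powers of $p$), so ``take $\bfz$ to be the basis of Proposition~\ref{prop:good-basis}'' is not available; this is harmless only because you later keep $\bfz$ and $\bu$ distinct. Moreover, with the convention needed for additivity of $\nabla$ (all level-$(n-1)$ objects formed from $F_n$ and $\tomg_n^{\pm}$ reduced modulo $\omega_{n-1}$, as in the paper's diagrams) there is no ``discrepancy between $\deg\tomg_n^{\pm}$ at levels $n$ and $n-1$'' to absorb, and $\nabla\mathcal{C}_n=0$ because the projection $\Lambda_n/(\tomg_n^+,\omega_n^-)\to\Lambda_{n-1}/(\tomg_n^+,\omega_n^-)$ is bijective on this finite module by \cite[Lemma~10.7 i)]{kob03}; your expectation that $\nabla\mathcal{C}_n$ is a nonzero term entering the final cancellation indicates that the bookkeeping which is supposed to produce the stated formula has not actually been carried out.
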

\begin{proof}
    Let $\Lambda_n^-=\Lambda\big/(\omega_n^-)$ and $I_n^+=X\Lambda_n^+=X\Lambda\big/(\omega_n^+)$.
Consider the following commutative diagram with exact rows
\begin{center}
  \begin{tikzpicture}
        \node (Q1) at (-3.5,2) {$0$};
    \node (Q2) at (-1,2) {$\left( I_{n}^{+}\oplus\Lambda_{n}^{-}  \right)^{\oplus 2}$};
    \node (Q3) at (3,2) {$\Lambda_{n}^{\oplus 2}$};    
     \node (Q4) at (7,2) {$ \left(\Lambda_{n}\big/ \left(\tomg_n^+,\omega_n^- \right)\right)^{\oplus 2}$};
      \node (Q5) at (10,2) {$0$};

       \node (Q6) at (-3.5,0) {$0$};      
      \node (Q7) at (-1,0) {$\left(  I_{n-1}^{+}\oplus\Lambda_{n-1}^{-}\right)^{\oplus 2}$};
    \node (Q8) at (3,0) {$\Lambda_{n-1}^{\oplus 2}$};
\node (Q9) at (7,0) {$ \left(\Lambda_{n-1}\big/ \left(\tomg_n^+,\omega_n^- \right)\right)^{\oplus 2}$};
   \node (Q10) at (10,0) {$0$,};

 \draw[->] (Q1)--(Q2);
  \draw[->] (Q2)--node [above] {$\ccF_n^{\oplus 2}$}(Q3);
  \draw[->] (Q2)--node [right] {$\pr$}(Q7);
  \draw[->] (Q3)--node [right] {$\pr$}(Q8);
  \draw[->] (Q4)--node [right] {$\pr$}(Q9);
  \draw[->] (Q3)--(Q4);
  \draw[->] (Q4)--(Q5);
   \draw[->] (Q6)--(Q7);
    \draw[->] (Q7)--node [above] {$\hat \ccF_{n}^{\oplus 2}$}(Q8);
   \draw[->] (Q8)--(Q9);
     \draw[->] (Q9)--(Q10);
         
       \end{tikzpicture}
\end{center}
where  $\ccF_n: I_{n}^{+} \oplus  \Lambda_{n}^{-} \longrightarrow \Lambda_n$ is given by $ (f,g)\mapsto \tomg_n^-f +\tomg_n^+g\pmod{\omega_n}$, and $  \hat \ccF_n:   I_{n-1}^{+}\oplus\Lambda_{n-1}^{-}  \longrightarrow \Lambda_{n-1}$ is the map $ (f,g)\mapsto \tomg_n^-f+\tomg_n^+g\pmod{\omega_{n-1}}$. This is well-defined since $\tomg_n^\pm\omega_n^\mp=\omega_n$.  

It follows from \cite[Lemma 10.7 i)]{kob03} that the third vertical map is an isomorphism. Furthermore, Proposition \ref{surjectivityofcolemanmaps} tells us that the Coleman maps $\col^\pm_n$ give the following identifications
\begin{equation*}
  \frac{H^1(k_n,T)}{H^1_+(k_n,T)}  \simeq I_{\ccO,n}^+\simeq I_{n}^{+,\oplus 2} \quad\text{ and }    \quad\frac{H^1(k_n,T)}{H^1_-(k_n,T)} \simeq \Lambda
_{\ccO,n}^-\simeq\Lambda_{n}^{-,\oplus 2}.
\end{equation*}
Therefore, we may rewrite the commutative diagram above as
\begin{center}
  \begin{tikzpicture}
        \node (Q1) at (-4.5,2) {$0$};
    \node (Q2) at (-1,2) {$\frac{H^1(k_n,T)}{H^1_+(k_n,T)}\oplus \frac{H^1(k_n,T)}{H^1_-(k_n,T)}$};
    \node (Q3) at (3,2) {$\Lambda_{n}^{\oplus 2}$};    
     \node (Q4) at (7,2) {$ \left(\Lambda_{n}\big/ \left(\tomg_n^+,\omega_n^- \right)\right)^{\oplus 2}$};
      \node (Q5) at (10,2) {$0$};

       \node (Q6) at (-4.5,0) {$0$};      
      \node (Q7) at (-1,0) {$\frac{H^1(k_{n-1},T)}{H^1_+(k_{n-1},T)}\oplus \frac{H^1(k_{n-1},T)}{H^1_-(k_{n-1},T)}$};
    \node (Q8) at (3,0) {$\Lambda_{n-1}^{\oplus 2}$};
\node (Q9) at (7,0) {$ \left(\Lambda_{n-1}\big/ \left(\tomg_n^+,\omega_n^- \right)\right)^{\oplus 2}$};
   \node (Q10) at (10,0) {$0$,};

 \draw[->] (Q1)--(Q2);
  \draw[->] (Q2)--node [above] {}(Q3);
  \draw[->] (Q2)--node [right] {}(Q7);
  \draw[->] (Q3)--node [right] {}(Q8);
  \draw[->] (Q4)--node [right] {$\simeq$}(Q9);
  \draw[->] (Q3)--(Q4);
  \draw[->] (Q4)--(Q5);
   \draw[->] (Q6)--(Q7);
    \draw[->] (Q7)--node [above] {}(Q8);
   \draw[->] (Q8)--(Q9);
     \draw[->] (Q9)--(Q10);
         
       \end{tikzpicture}
\end{center}
where the first two horizontal maps are given by the compositions of $\ccF_n^{\oplus2}$ with $(\col^+_n,\col^-_n)$, and $\hat\ccF_n^{\oplus2}$ with $(\col^+_{n-1},\col^-_{n-1})$, respectively.
Let $\pr_n:H^1_\iw(K_\infty,T)\rightarrow H^1_\iw(K_\infty,T)_{\Gamma_n}$ denote the natural projection. If we write
  \[
 \ccY'''(E/K_n)=\coker\left(\pr_n\left(\langle u_1,u_2\rangle\right)\longrightarrow \frac{H^1(k_n,T)}{H^1_+(k_n,T)}\oplus \frac{H^1(k_n,T)}{H^1_-(k_n,T)} \right),
 \]
we have the following commutative diagram with exact rows:

  \begin{center}
  \begin{tikzpicture}
   \node (Qa) at (-3.5,2) {$0$};
    \node (Qb) at (-3.5,0) {$0$};
    \node (Qc) at (10,0) {$0$.};
    \node (Qd) at (10,2) {$0$};

     \node (Q2) at (-1,2) {$\ccY'''(E/K_n)$};
     \node (Q3) at (6,2) { $\left(\Lambda_{n}\big/ \left(\tomg_n^+,\omega_n^- \right)\right)^{\oplus 2}$};
      
      \node (Q5) at (-1,0) {$\ccY'''(E/K_{n-1})$};
    \node (Q6) at (6,0) {$ \left(\Lambda_{n-1}\big/ \left(\tomg_n^+,\omega_n^- \right)\right)^{\oplus 2}$};
\node (Q7) at (2.5,0) {$M_{n-1}$};
    \node (Q8) at (2.5,2) {$M_n$};

    \draw[->] (Q2)--(Q8);
    \draw[->] (Q5)--(Q7);
      \draw[->] (Q7)--(Q6);
    \draw[->] (Q8)--(Q3);
     \draw[->] (Q8)-- (Q7);
    \draw[->] (Q3)--node [right] {$\simeq$} (Q6);
     \draw[->] (Qa)--(Q2);
     \draw[->] (Q3)--(Qd);
      \draw[->] (Qb)--(Q5);
     \draw[->] (Q6)--(Qc);
      \draw[->] (Q2)--(Q5);
       \end{tikzpicture}
\end{center}
Therefore, $\nabla\ccY'''(E/K_n)=\nabla M_n$.

It follows from Lemma~\ref{lem:nonzero} that the map
\begin{align*}
  H^1_\iw(K_\infty,T)_{\Gamma_n} \longrightarrow \frac{H^1(k_n,T)}{H^1_+(k_n,T)}\oplus \frac{H^1(k_n,T)}{H^1_-(k_n,T)}
\end{align*}
is injective for $n\gg0$. For such $n$, there is a short exact sequence
\begin{equation}
    0\longrightarrow \frac{H^1_\iw(K_\infty,T)_{\Gamma_n}}{\pr_n\langle u_1,u_2\rangle}\longrightarrow \ccY'''(E/K_n)\longrightarrow\ccY''(E/K_n)\longrightarrow 0.
\label{eq:relationgYs}
\end{equation}

Let $B\in\mat$ be the $2\times2$ matrix given by the equation $$\begin{bmatrix}
    u_1&u_2
\end{bmatrix}=\begin{bmatrix}
    z_1&z_2
\end{bmatrix}B$$
as in the proof of Proposition~\ref{prop:good-basis}. Recall that $H^1_\iw(K_\infty,T)\simeq\Lambda^{\oplus 2}$ (see Proposition~\ref{consequenceofcotorsionness}(i)) and $B$ is chosen so that $\det(B)$ is coprime to $\omega_n$ for all $n$. Lemma~\ref{lem:Rod} implies that $$\pr_n\langle u_1,u_2\rangle\simeq\frac{\langle B\rangle+\omega_n\Lambda^{\oplus 2}}{\omega_n\Lambda^{\oplus 2}}\simeq\frac{\langle B\rangle}{\langle B\rangle\cap\omega_n\Lambda^{\oplus 2}}=\frac{\langle B\rangle}{\omega_n\langle B\rangle}\simeq\langle u_1,u_2\rangle_{\Gamma_n},$$
and
$$\left(\frac{H^1_\iw(K_\infty,T)}{\langle u_1,u_2\rangle}\right)^{\Gamma_n}\simeq \left(\frac{\Lambda^{\oplus 2}}{\langle B\rangle}\right)^{\Gamma_n}=0.$$
Combined with \cite[Lemma~10.3]{kob03} gives the short exact sequence
\[
0\longrightarrow\langle u_1,u_2\rangle_{\Gamma_n}\longrightarrow H^1_\iw(K_\infty,T)_{\Gamma_n}\longrightarrow \left(\frac{H^1_\iw(K_\infty,T)}{\langle u_1,u_2\rangle}\right)_{\Gamma_n}\longrightarrow0,
\]
and we deduce that the first term of \eqref{eq:relationgYs} is isomorphic to $\displaystyle\left(\frac{H^1_{\iw}(K_\infty,T) }{\langle u_1,u_2\rangle}\right)_{\Gamma_n}$. Since $\det(B)$ generates the $\Lambda$-characteristic ideal of $\displaystyle\frac{H^1_{\iw}(K_\infty,T) }{\langle u_1,u_2\rangle}$, we have for $n\gg0$,
\[
\nabla\frac{H^1_{\iw}(K_\infty,T)_{\Gamma_n}}{\pr_n\left(\langle u_1,u_2\rangle\right)}=\ord_{\epsilon_n}\left(\det(B)(\epsilon_n)\right)
\]
by Lemma~\ref{kobayashirankandcharacteristicpolynomial}(ii). Thus, for $n\gg0$, $\nabla\ccY''(E/K_n)$ is defined and is equal to $\nabla M_n-\ord_{\epsilon_n}\left(\det(B)(\epsilon_n)\right)$.
Furthermore, $\det  \underline{\col}^\pm(\bu)=\det  \underline{\col}^\pm(\bfz)\det(B)$ by definition. Hence, after combining Lemmas~\ref{kobayashirankofy'andy''} and~\ref{lem:M_n}, we deduce that
\begin{equation*}
\nabla \ccY'(E/K_n) =\begin{cases}
         2  \deg\tomg_n^+ + \ord_{\epsilon_n}\left(\det  \underline{\col}^-(\bfz)(\epsilon_n)\right) & \;\;\; \text{if } n  \text{ is odd,}  \\
          2 \deg\tomg_n^- +\ord_{\epsilon_n}\left(\det  \underline{\col}^+(\bfz)(\epsilon_n)\right) & \;\;\; \text{if } n \text{ is even.}
        \end{cases}
\end{equation*}
It follows from an adaptation of \cite[Proposition~10.6 i)]{kob03} and \cite[Proposition~3.6]{LL22} to the current setting that for $n\gg 0$, 
    \begin{equation*}
        \nabla \ccY(E/K_n)=\nabla \ccY'(E/K_n),
    \end{equation*}
    which concludes the proof.
\end{proof}

\vspace{2mm}

 \begin{corollary}\label{cor:nablaX}
 Suppose that Hypothesis~\ref{cotorsionconjecture} holds.     For $n\gg 0$, we have
    \begin{equation*}
       \nabla \ccX(E/K_n)=
\begin{cases}
  2 s_{n-1}   + \lambda_- + (p^n-p^{n-1})\mu_- & \;\;\; \text{if } n  \text{ is odd,}  \\
        2s_{n-1}+\lambda_+ + (p^n-p^{n-1})\mu_+ & \;\;\; \text{if } n \text{ is even,}
        \end{cases}
    \end{equation*}
    where $s_n=\sum_{k=1}^{n}(-1)^{n-k}p^k$ for $n\geq 0$.
  \end{corollary}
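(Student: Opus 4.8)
The plan is to apply the additivity of Kobayashi ranks (Lemma~\ref{kobayashirankzero}(i)) to the short exact sequence of Lemma~\ref{kobayashirankofX0}(i),
\[
0\longrightarrow \ccY(E/K_n)\longrightarrow\ccX(E/K_n)\longrightarrow\ccX^0(E/K_n)\longrightarrow 0.
\]
By Proposition~\ref{prop:nablaY}, $\nabla\ccY(E/K_n)$ is defined for $n\gg0$; and since $\ccX^0(E/K_\infty)$ is a quotient of $\ccX^\pm(E/K_\infty)$ it is $\Lambda$-torsion, so Lemma~\ref{kobayashirankofX0}(ii) together with Lemma~\ref{kobayashirankandcharacteristicpolynomial}(ii) gives $\nabla\ccX^0(E/K_n)=\lambda_0+\phi(p^n)\mu_0$ for $n\gg0$, where $\lambda_0,\mu_0$ denote the Iwasawa invariants of $\ccX^0(E/K_\infty)$. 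Hence $\nabla\ccX(E/K_n)$ is defined for $n\gg0$ and equals $\nabla\ccY(E/K_n)+\lambda_0+\phi(p^n)\mu_0$. Given Proposition~\ref{prop:nablaY}, the corollary is therefore reduced to two bookkeeping tasks: (a) rewrite $\ord_{\epsilon_n}\bigl(\det\underline{\col}^\pm(\bfz)(\epsilon_n)\bigr)$ in terms of $\lambda_\pm,\mu_\pm$ and $\lambda_0,\mu_0$, and (b) identify $2\deg\tomg_n^\pm$ with $2s_{n-1}$ up to the correction terms produced in~(a).

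For~(a), fix a $\Lambda$-basis $\bfz=(z_1,z_2)$ of $H^1_\iw(K_\infty,T)$ as in Proposition~\ref{prop:nablaY}. The Poitou--Tate exact sequence used in the proof of Lemma~\ref{lem:nonzero} yields, for $\bullet\in\{+,-\}$, a short exact sequence of $\Lambda$-torsion modules $0\to\mathcal C^\bullet\to\ccX^\bullet(E/K_\infty)\to\ccX^0(E/K_\infty)\to0$, where $\mathcal C^\bullet$ is the cokernel of the map \eqref{lambdahomomorphism} onto its image, i.e.\ $\mathcal C^\bullet=\im(\col^\bullet)/\langle\underline{\col}^\bullet(\bfz)\rangle$. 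By Proposition~\ref{surjectivityofcolemanmaps}, $\im(\col^-)=\Lambda_\ccO\cong\Ltwo$, so $\mathcal C^-=\Ltwo/\langle\underline{\col}^-(\bfz)\rangle$ has characteristic ideal $(\det\underline{\col}^-(\bfz))$; whereas $\im(\col^+)=I_\ccO=X\Lambda_\ccO$, with $\Lambda_\ccO/I_\ccO\cong\ccO$ a free $\Zp$-module of rank $2$ and hence of $\Lambda$-characteristic ideal $(X^2)$, so that the nested inclusions $\langle\underline{\col}^+(\bfz)\rangle\subseteq I_\ccO\subseteq\Lambda_\ccO$ give $\Char_\Lambda(\mathcal C^+)\cdot(X^2)=(\det\underline{\col}^+(\bfz))$. (Note $X^2\mid\det\underline{\col}^+(\bfz)$ in any case, because every entry of $\underline{\col}^+(\bfz)$ is divisible by $X$, cf.\ the proof of Lemma~\ref{lem:congruent-Fu}.) Writing $\lambda(f),\mu(f)$ for the Iwasawa invariants of $\Lambda/(f)$ for nonzero $f\in\Lambda$, multiplicativity of characteristic ideals in these sequences and the relation $\ord_{\epsilon_n}(f(\epsilon_n))=\lambda(f)+\phi(p^n)\mu(f)$ for $n\gg0$ (Lemma~\ref{kobayashirankandcharacteristicpolynomial}) give, for $n\gg0$,
\begin{align*}
\ord_{\epsilon_n}\bigl(\det\underline{\col}^-(\bfz)(\epsilon_n)\bigr)&=(\lambda_--\lambda_0)+\phi(p^n)(\mu_--\mu_0),\\
\ord_{\epsilon_n}\bigl(\det\underline{\col}^+(\bfz)(\epsilon_n)\bigr)&=(\lambda_+-\lambda_0+2)+\phi(p^n)(\mu_+-\mu_0).
\end{align*}

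For~(b), grouping the degrees $\phi(p^m)=p^m-p^{m-1}$ of the cyclotomic factors of $\tomg_n^\pm$ into consecutive pairs shows $\deg\tomg_n^+=s_{n-1}$ when $n$ is odd, while $\deg\tomg_n^-=s_{n-1}-1$ when $n$ is even. Substituting this and the two displays above into $\nabla\ccX(E/K_n)=\nabla\ccY(E/K_n)+\lambda_0+\phi(p^n)\mu_0$, with $\nabla\ccY(E/K_n)$ as in Proposition~\ref{prop:nablaY}, the auxiliary invariants $\lambda_0,\mu_0$ cancel: for $n$ odd one gets $2s_{n-1}+\lambda_-+\phi(p^n)\mu_-$, and for $n$ even the $+2$ coming from $X^2\mid\det\underline{\col}^+(\bfz)$ exactly compensates the discrepancy $2\deg\tomg_n^-=2s_{n-1}-2$, giving $2s_{n-1}+\lambda_++\phi(p^n)\mu_+$. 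I expect the only genuinely delicate point to be this bookkeeping around the factor $X^2$ in $\det\underline{\col}^+(\bfz)$ — equivalently, the fact that $\col^+$ surjects onto the augmentation ideal $I_\ccO$ rather than onto all of $\Lambda_\ccO$ (Proposition~\ref{surjectivityofcolemanmaps}) — and matching it with the identity $\deg\tomg_n^-=s_{n-1}-1$ in the even case; the rest is formal, given the results recalled above.
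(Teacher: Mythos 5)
Your proposal is correct and follows essentially the same route as the paper: additivity over the sequence $0\to\ccY(E/K_n)\to\ccX(E/K_n)\to\ccX^0(E/K_n)\to0$, the Poitou--Tate sequence $0\to\image(\underline{\col}^\bullet)/\langle\underline{\col}^\bullet(\bfz)\rangle\to\ccX^\bullet(E/K_\infty)\to\ccX^0(E/K_\infty)\to0$ together with Proposition~\ref{surjectivityofcolemanmaps}, which is exactly the paper's characteristic-ideal identity $(X^{n^\bullet}\det\underline{\col}^\bullet(\bfz)f_0)=(f^\bullet)$ with $n^-=0$, $n^+=-2$, and then Lemma~\ref{kobayashirankandcharacteristicpolynomial} and Proposition~\ref{prop:nablaY}. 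Your explicit bookkeeping (the factor $X^2$ from $\image(\col^+)=I_\ccO$ compensating $\deg\tomg_n^-=s_{n-1}-1$ in the even case, and $\deg\tomg_n^+=s_{n-1}$ in the odd case) is exactly the computation the paper leaves implicit, and it checks out.
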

 
 \begin{proof}
Let $\bullet\in\{+,-\}$. Let $f_\bullet\in\Lambda$ (resp. $f_0$) denote a characteristic element of the torsion $\Lambda$-module $\ccX^\bullet(E/K_\infty)$ (resp. $\ccX^0(E/K_\infty)$).  We have the following Poitou--Tate exact sequence of $\Lambda$-torsion modules:
 \[
 0\longrightarrow \frac{\image\left(\underline\col^\bullet\right)}{\langle  \underline{\col}^\bullet(\bfz)\rangle}\longrightarrow \ccX^\bullet(E/K_\infty)\longrightarrow\ccX^0(E/K_\infty)\longrightarrow0.
 \]
 As $\image(\col^-)=\Ltwo$ and $\image(\col^+)=(X\Lambda)^{\oplus2}$, this gives the equality of $\Lambda$-ideals
 \[
 (X^{n^\bullet}\det  \underline{\col}^\bullet(\bfz)f_0)=(f^\bullet),
 \]
 where $n^-=0$ and ${n^+=-2}$.
 Lemma~\ref{kobayashirankofX0} tells us that
 \[
 \nabla \ccX(E/K_n)=\nabla\ccY(E/K_n)+\nabla\ccX^0(E/K_n).
 \]
 Hence, the corollary follows from Lemma~\ref{kobayashirankandcharacteristicpolynomial} and Proposition~\ref{prop:nablaY}.
 \end{proof}

\vspace{2mm}

We are now ready to conclude the proof of Theorem~\ref{maintheorem}.

\vspace{2mm}

\begin{theorem}
  Suppose that Hypothesis~\ref{cotorsionconjecture} holds. Under our running hypotheses, we have:
    \begin{enumerate}[label=(\roman*)]
        \item $\rank_\bbZ E(K_n)$ is bounded independently of $n$;
        
\vspace{1.5mm}
        
        \item Assume that $\Sha(E/K_n )[p^\infty]$ is finite for all $n\geq 0$. Define $\displaystyle r_\infty=\sup_{n\ge0}\left\{ \rank_\bbZ E(K_n)\right\}$. Then, for $n\gg 0$, we have
        \[\nabla\Sha(E/K_n )[p^\infty]=\begin{cases}
      2s_{n-1}   + \lambda_- + \phi(p^n)\mu_-   -r_\infty   & \;\;\; \text{if } n  \text{ is odd,}  \\
          2s_{n-1} +\lambda_+ + \phi(p^n)\mu_+ -r_\infty   & \;\;\; \text{if } n \text{ is even,}
        \end{cases} \]
        where $\phi(p^n)=p^n-p^{n-1}$, and $s_n=\sum_{k=1}^{n}(-1)^{n-k}p^k$, for $n\geq 0$.
    \end{enumerate}
\end{theorem}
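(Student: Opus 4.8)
The plan is to deduce the theorem from Corollary~\ref{cor:nablaX} by the standard bookkeeping that relates the dual Selmer module $\ccX(E/K_n)=\Sel_{p^\infty}(E/K_n)^\vee$ to the Mordell--Weil group and to $\Sha$, following \cite[\S10]{kob03} and \cite[\S3]{LL22}.

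For part (i): Corollary~\ref{cor:nablaX} already asserts that $\nabla\ccX(E/K_n)$ is defined for $n\gg0$, i.e. the natural projection $\pi_n\colon\ccX(E/K_n)\to\ccX(E/K_{n-1})$ has finite kernel and cokernel. (It is in fact surjective, being the Pontryagin dual of the restriction $\Sel_{p^\infty}(E/K_{n-1})\to\Sel_{p^\infty}(E/K_n)$, which is injective as in the proof of Lemma~\ref{controltheorem}.) Tensoring with $\Qp$, the map $\pi_n$ becomes an isomorphism for $n\gg0$, so $\dim_{\Qp}\ccX(E/K_n)\otimes\Qp$ is eventually constant. Since $\rank_\bbZ E(K_n)\le\corank_{\Zp}\Sel_{p^\infty}(E/K_n)=\dim_{\Qp}\ccX(E/K_n)\otimes\Qp$ and $\rank_\bbZ E(K_n)$ is non-decreasing in $n$, I would conclude that $\rank_\bbZ E(K_n)$ is bounded, and then, by monotonicity, that $\rank_\bbZ E(K_n)=r_\infty$ for all $n\gg0$.

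For part (ii), assume $\Sha(E/K_n)[p^\infty]$ is finite for all $n$. Dualizing the Kummer exact sequence
\[
0\longrightarrow E(K_n)\otimes\Qp/\Zp\longrightarrow\Sel_{p^\infty}(E/K_n)\longrightarrow\Sha(E/K_n)[p^\infty]\longrightarrow0
\]
gives a short exact sequence of projective systems
\[
0\longrightarrow\Sha(E/K_n)[p^\infty]^\vee\longrightarrow\ccX(E/K_n)\longrightarrow\bigl(E(K_n)\otimes\Qp/\Zp\bigr)^\vee\longrightarrow0.
\]
The crucial input is that $\nabla\bigl(E(K_n)\otimes\Qp/\Zp\bigr)^\vee=r_\infty$ for $n\gg0$. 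Indeed, by part (i) we have $E(K_n)\otimes\Qp/\Zp\cong(\Qp/\Zp)^{r_\infty}$ for $n\gg0$, and the transition map $E(K_{n-1})\otimes\Qp/\Zp\to E(K_n)\otimes\Qp/\Zp$ is the restriction of the injection $\Sel_{p^\infty}(E/K_{n-1})\hookrightarrow\Sel_{p^\infty}(E/K_n)$ coming from Kummer theory, hence injective; an injection of divisible $p$-groups of equal finite corank is an isomorphism, so dually the transition maps of $\bigl(E(K_n)\otimes\Qp/\Zp\bigr)^\vee\cong\Zp^{r_\infty}$ are isomorphisms, and $\nabla=r_\infty$ directly from the definition. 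Since $\nabla$ is automatically defined for a projective system of finite modules, Lemma~\ref{kobayashirankzero}(i) applied to the displayed short exact sequence yields
\[
\nabla\Sha(E/K_n)[p^\infty]^\vee=\nabla\ccX(E/K_n)-r_\infty\qquad(n\gg0).
\]
Finally, for a projective system of finite modules $\nabla M_n=\val_p(|M_n|)-\val_p(|M_{n-1}|)$ depends only on the cardinalities, so $\nabla\Sha(E/K_n)[p^\infty]^\vee=\nabla\Sha(E/K_n)[p^\infty]=e_n-e_{n-1}$, and substituting the value of $\nabla\ccX(E/K_n)$ from Corollary~\ref{cor:nablaX} gives the asserted formula.

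The one step with genuine content is the identity $\nabla\bigl(E(K_n)\otimes\Qp/\Zp\bigr)^\vee=r_\infty$ --- equivalently, that the Mordell--Weil transition maps eventually become isomorphisms --- and I expect the factorization-through-Selmer argument above to be the cleanest way to see it; an alternative is to adapt \cite[Proposition~10.6]{kob03} and \cite[Proposition~3.6]{LL22}, as was done in the proof of Proposition~\ref{prop:nablaY}. Everything else is formal once Corollary~\ref{cor:nablaX} is available; in particular the normalization of the right-hand side in terms of $s_{n-1}$, $\lambda_\pm$ and $\mu_\pm$ has already been carried out there, so no further manipulation of $\deg\tomg_n^\pm$ or of $\ord_{\epsilon_n}\det\underline{\col}^\pm(\bfz)(\epsilon_n)$ is needed here.
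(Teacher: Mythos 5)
Your proposal is correct and follows essentially the same route as the paper: both deduce the result from Corollary~\ref{cor:nablaX} via the Kummer exact sequence relating $E(K_n)\otimes\Qp/\Zp$, $\Sel_{p^\infty}(E/K_n)$ and $\Sha(E/K_n)[p^\infty]$, together with the additivity of $\nabla$ from Lemma~\ref{kobayashirankzero} and the observation that the Mordell--Weil contribution stabilizes to $r_\infty$. Your explicit justification that the transition maps on $\bigl(E(K_n)\otimes\Qp/\Zp\bigr)^\vee$ are eventually isomorphisms (via duality to a surjective endomorphism of $\Zp^{r_\infty}$) is just a more detailed rendering of the step the paper states as ``Part (i) tells us that $\nabla E(K_n)\otimes\Zp=r_\infty$ for $n\gg0$,'' so no substantive difference remains.
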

\begin{proof}
      Since $\nabla\ccX(E/K_n)$ is defined when $n$ is sufficiently large by Corollary~\ref{cor:nablaX}, the kernel and cokernel of the natural map $\ccX(E/K_{n+1})\rightarrow \ccX(E/K_n)$ are finite for such $n$. In particular, $\rank_\bbZ E(K_n)$ is bounded independently of $n$. Hence, the first assertion follows from the well-known exact sequence
    \begin{equation}\label{selmergroupexactsequence}
            0 \longrightarrow E(K_n)\otimes \bbQ_p/\bbZ_p \longrightarrow \Sel_{p^\infty}(E/K_n) \longrightarrow \Sha(E/K_n )[p^\infty] \longrightarrow 0.
        \end{equation}
        
    It follows from the exact sequence \eqref{selmergroupexactsequence} and Lemma \ref{kobayashirankzero} that
    \begin{equation*}
        \nabla\Sha(E/K_n )[p^\infty] = \nabla \ccX(E/K_n)-\nabla E(K_n)\otimes \bbZ_p.
    \end{equation*}
    Part (i) tells us that $\nabla E(K_n)\otimes \bbZ_p=r_\infty$ for $n\gg0$.  Thus, the assertion (ii) follows from Corollary~\ref{cor:nablaX}.
\end{proof}
\vspace{2mm}
\begin{remark}
   Similarly to \cite[Corollary~10.2]{kob03}, it would be possible to give an alternative proof of part (i) of Theorem~\ref{maintheorem} via analogues of the control theorems in \S9 of \textit{op. cit.}
\end{remark}

\bibliographystyle{alpha} 
\bibliography{reference}

\begin{thebibliography}{\c{C}09}

\bibitem[AH05]{AH}
Adebisi Agboola and Benjamin Howard.
\newblock Anticyclotomic {I}wasawa theory of {CM} elliptic curves. {II}.
\newblock {\em Math. Res. Lett.}, 12(5-6):611--621, 2005.

\bibitem[BBL24]{bbl24}
Ashay~A. Burungale, K\^{a}z{\i}m B\"{u}y\"{u}kboduk, and Antonio Lei.
\newblock Anticyclotomic {I}wasawa theory of abelian varieties of {$\rm GL_2$}-type at non-ordinary primes.
\newblock {\em Adv. Math.}, 439:Paper No. 109465, 2024.

\bibitem[BKO21]{bko21}
Ashay~A. Burungale, Shinichi Kobayashi, and Kazuto Ota.
\newblock Rubin's conjecture on local units in the anticyclotomic tower at inert primes.
\newblock {\em Ann. Math. (2)}, 194(3):943--966, 2021.

\bibitem[BKO24]{bko23}
Ashay~A. Burungale, Shinichi Kobayashi, and Kazuto Ota.
\newblock $p$-adic {$L$}-functions and rational points on {CM} elliptic curves at inert primes.
\newblock {\em J. Inst. Math. Jussieu}, 23(3):1417--1460, 2024.

\bibitem[BKO25]{bko24}
Ashay~A. Burungale, Shinichi Kobayashi, and Kazuto Ota.
\newblock On the {T}ate--{S}hafarevich groups of {CM} elliptic curves over anticyclotomic $\mathbb{Z}_p$-extensions at inert primes.
\newblock {\em to appear in the Proceedings of the Bertolinifest}, 2025.

\bibitem[BLV23]{BLV}
Massimo Bertolini, Matteo Longo, and Rodolfo Venerucci.
\newblock The anticyclotomic main conjectures for elliptic curves, 2023.
\newblock Preprint, arXiv:2306.17784.

\bibitem[\c{C}09]{Cip09}
Mirela \c{C}iperiani.
\newblock Tate-{S}hafarevich groups in anticyclotomic {$\Bbb Z_p$}-extensions at supersingular primes.
\newblock {\em Compos. Math.}, 145(2):293--308, 2009.

\bibitem[IP06]{ip06}
Adrian Iovita and Robert Pollack.
\newblock Iwasawa theory of elliptic curves at supersingular primes over {$\ZZ_p$}-extensions of number fields.
\newblock {\em J. Reine Angew. Math.}, 598:71--103, 2006.

\bibitem[Kat04]{kato04}
Kazuya Kato.
\newblock {$p$}-adic {H}odge theory and values of zeta functions of modular forms.
\newblock Number 295, pages ix, 117--290. 2004.
\newblock Cohomologies $p$-adiques et applications arithm\'{e}tiques. III.

\bibitem[Kob03]{kob03}
Shinichi Kobayashi.
\newblock Iwasawa theory for elliptic curves at supersingular primes.
\newblock {\em Invent. Math.}, 152(1):1--36, 2003.

\bibitem[Kur02]{kur02}
Masato Kurihara.
\newblock On the {T}ate {S}hafarevich groups over cyclotomic fields of an elliptic curve with supersingular reduction. {I}.
\newblock {\em Invent. Math.}, 149(1):195--224, 2002.

\bibitem[Lim20]{Lim20}
Meng~Fai Lim.
\newblock On the control theorem for fine {S}elmer groups and the growth of fine {T}ate-{S}hafarevich groups in {$\Bbb Z_p$}-extensions.
\newblock {\em Doc. Math.}, 25:2445--2471, 2020.

\bibitem[LL22]{LL22}
Antonio Lei and Meng~Fai Lim.
\newblock Mordell-{W}eil ranks and {T}ate-{S}hafarevich groups of elliptic curves with mixed-reduction type over cyclotomic extensions.
\newblock {\em Int. J. Number Theory}, 18(2):303--330, 2022.

\bibitem[LLM23]{llm23}
Antonio Lei, Meng~Fai Lim, and Katharina M\"uller.
\newblock Asymptotic formula for {T}ate-{S}hafarevich groups of {$p$}-supersingular elliptic curves over anticyclotomic extensions.
\newblock {\em Adv. Math.}, 434:Paper No. 109320, 28, 2023.

\bibitem[LLZ17]{LLZ17}
Antonio Lei, David Loeffler, and Sarah~Livia Zerbes.
\newblock On the asymptotic growth of {B}loch-{K}ato-{S}hafarevich-{T}ate groups of modular forms over cyclotomic extensions.
\newblock {\em Canad. J. Math.}, 69(4):826--850, 2017.

\bibitem[Man71]{man71}
Ju.~I. Manin.
\newblock Cyclotomic fields and modular curves.
\newblock {\em Uspehi Mat. Nauk}, 26(6(162)):7--71, 1971.

\bibitem[Mat21]{matar21}
Ahmed Matar.
\newblock Kolyvagin's work and anticyclotomic tower fields: the supersingular case.
\newblock {\em Acta Arith.}, 201(2):131--147, 2021.

\bibitem[Maz72]{Maz72}
Barry Mazur.
\newblock Rational points of abelian varieties with values in towers of number fields.
\newblock {\em Inventiones mathematicae}, 18(3-4):183--266, 1972.

\bibitem[MR04]{MR04}
Barry Mazur and Karl Rubin.
\newblock Kolyvagin systems.
\newblock {\em Mem. Amer. Math. Soc.}, 168(799):viii+96, 2004.

\bibitem[M{\"u}l25]{muller25}
Katharina M{\"u}ller.
\newblock {An asymptotic formula for Tate-Shafarevich groups of CM elliptic curves at supersingular primes}, 2025.
\newblock Preprint, arXiv:2504.05734.

\bibitem[Pol05]{Pol05}
Robert Pollack.
\newblock An algebraic version of a theorem of {K}urihara.
\newblock {\em J. Number Theory}, 110(1):164--177, 2005.

\bibitem[PR00]{Per00}
Bernadette Perrin-Riou.
\newblock {\em {$p$}-adic {$L$}-functions and {$p$}-adic representations}, volume~3 of {\em SMF/AMS Texts and Monographs}.
\newblock American Mathematical Society, Providence, RI; Soci\'{e}t\'{e} Math\'{e}matique de France, Paris, 2000.
\newblock Translated from the 1995 French original by Leila Schneps and revised by the author.

\bibitem[Roh84]{roh84}
David~E. Rohrlich.
\newblock On {$L$}-functions of elliptic curves and anticyclotomic towers.
\newblock {\em Invent. Math.}, 75(3):383--408, 1984.

\bibitem[Rub87]{rub87loc}
Karl Rubin.
\newblock Local units, elliptic units, {H}eegner points and elliptic curves.
\newblock {\em Invent. Math.}, 88(2):405--422, 1987.

\bibitem[Rub00]{Ru00}
Karl Rubin.
\newblock {\em Euler {S}ystems}.
\newblock Annals of Math. Studies, Vol.147, 2000.

\bibitem[Spr13]{Spr13}
Florian Sprung.
\newblock The \v{S}afarevi\v{c}-{T}ate group in cyclotomic {$\Bbb Z_p$}-extensions at supersingular primes.
\newblock {\em J. Reine Angew. Math.}, 681:199--218, 2013.

\bibitem[Spr17]{Spr17}
Florian Sprung.
\newblock On pairs of {$p$}-adic {$L$}-functions for weight-two modular forms.
\newblock {\em Algebra Number Theory}, 11(4):885--928, 2017.

\bibitem[Wut07]{Wut04}
Christian Wuthrich.
\newblock Iwasawa theory of the fine {S}elmer group.
\newblock {\em J. Algebraic Geom.}, 16(1):83--108, 2007.

\bibitem[YZ25]{YZ24}
Xiaojun Yan and Xiuwu Zhu.
\newblock Rubin's conjecture on local units in the anticyclotomic tower at inert primes: {$p=3$} case.
\newblock {\em Ramanujan J.}, 66(3):Paper No. 49, 17, 2025.

\end{thebibliography}

\end{document}